\setlist[itemize]{leftmargin=20pt}
\newcommand{\N}{\ensuremath{\mathbf{N}}}
\newcommand{\Z}{\ensuremath{\mathbf{Z}}}
\newcommand{\Q}{\ensuremath{\mathbf{Q}}}
\newcommand{\R}{\ensuremath{\mathbf{R}}}
\newcommand{\C}{\ensuremath{\mathbf{C}}}
\renewcommand{\P}{\ensuremath{\mathbb{P}}}
\newcommand{\mc}{\mathcal}
\newcommand{\ms}{\mathscr}
\DeclarePairedDelimiter\abs{\lvert}{\rvert}
\DeclarePairedDelimiter\cbrace\{\}
\DeclarePairedDelimiter\ha()
\DeclarePairedDelimiter{\ip}\langle\rangle
\DeclarePairedDelimiter{\nrm}\lVert\rVert
\newcommand{\nrmb}[1]{\bigl\|#1\bigr\|}
\newcommand{\absb}[1]{\bigl|#1\bigr|}
\newcommand{\hab}[1]{\bigl(#1\bigr)}
\newcommand{\cbraceb}[1]{\bigl\{#1\bigr\}}
\newcommand{\ipb}[1]{\bigl\langle#1\bigr\rangle}
\newcommand{\bracb}[1]{\bigl[#1\bigr]}
\newcommand{\nrms}[1]{\Bigl\|#1\Bigr\|}
\newcommand{\abss}[1]{\Bigl|#1\Bigr|}
\newcommand{\has}[1]{\Bigl(#1\Bigr)}
\newcommand{\cbraces}[1]{\Bigl\{#1\Bigr\}}
\newcommand{\ips}[1]{\Bigl\langle#1\Bigr\rangle}
\DeclareMathOperator{\sgn}{sgn}
\DeclareMathOperator{\loc}{loc}
\DeclareMathOperator{\supp}{supp}
\DeclareMathOperator{\ind}{\mathbf{1}}
\DeclareMathOperator{\UMD}{UMD}
\DeclareMathOperator{\HL}{HL}
\DeclareMathOperator{\BHT}{BHT}
\DeclareMathOperator{\BHF}{BHF}
\DeclareMathOperator{\pv}{p.v.}
\DeclareMathOperator{\ch}{ch}
\DeclareMathOperator{\parent}{\pi}
\newcommand{\dd}{\hspace{2pt}\mathrm{d}}
\newcommand{\comp}{\mathsf{c}}
\renewcommand{\l}{\ensuremath{\ell}}
\renewcommand{\emptyset}{\varnothing}
\def\avint_#1{\mathchoice{\mathop{\kern 0.2em\vrule width 0.6em height 0.69678ex depth -0.58065ex \kern -0.8em \intop}\nolimits_{\kern -0.4em#1}}{\mathop{\kern 0.1em\vrule width 0.5em height 0.69678ex depth -0.60387ex \kern -0.6em \intop}\nolimits_{#1}} {\mathop{\kern 0.1em\vrule width 0.5em height 0.69678ex depth -0.60387ex \kern -0.6em \intop}\nolimits_{#1}} {\mathop{\kern 0.1em\vrule width 0.5em height 0.69678ex depth -0.60387ex \kern -0.6em \intop}\nolimits_{#1}}}
\newtheorem{theorem}{Theorem}
\newtheorem{corollary}[theorem]{Corollary}
\newtheorem{lemma}[theorem]{Lemma}
\newtheorem{proposition}[theorem]{Proposition}
\newtheorem{conjecture}[theorem]{Conjecture}
\theoremstyle{remark}
\newtheorem{remark}[theorem]{Remark}
\newtheorem{example}[theorem]{Example}
\theoremstyle{definition}
\newtheorem{definition}[theorem]{Definition}
\newtheorem{convention}[theorem]{Convention}
\numberwithin{theorem}{section}
\numberwithin{equation}{section}
\title{Sparse domination implies vector-valued sparse domination}
\author{Emiel Lorist and Zoe Nieraeth}
\thanks{The first author is supported by the VIDI subsidy 639.032.427 of the Netherlands Organisation for Scientific Research (NWO)}
\address{Delft Institute of Applied Mathematics \\ Delft University of Technology \\ P.O. Box 5031\\ 2600 GA Delft \\The Netherlands}
\email{e.lorist@tudelft.nl}
\address{Fakult\"at f\"ur Mathematik \\ Institut f\"ur Analysis \\ Karlsruher Institut f\"ur Technologie \\ Englerstrasse 2 \\ 76131 Karlsruhe \\
Germany}
\email{znieraeth@bcamath.org}
\begin{document}
\begin{abstract}
We prove that scalar-valued sparse domination of a multilinear operator implies vector-valued sparse domination for tuples of quasi-Banach function spaces, for which we introduce a multilinear analogue of the $\UMD$ condition. This condition is characterized by the boundedness of the multisublinear Hardy-Littlewood maximal operator and goes beyond examples in which a $\UMD$ condition is assumed on each individual space and includes e.g. iterated Lebesgue, Lorentz, and Orlicz spaces.
Our method allows us to obtain sharp vector-valued weighted bounds directly from scalar-valued sparse domination, without the use of a Rubio de Francia type extrapolation result.

We apply our result to obtain new vector-valued bounds for multilinear Calder\'on-Zygmund operators as well as recover the old ones with a new sharp weighted bound. Moreover, in the Banach function space setting we improve upon recent vector-valued bounds for the bilinear Hilbert transform.
\end{abstract}

\keywords{Sparse domination, multilinear, UMD, Muckenhoupt weights, Banach function space, bilinear Hilbert transform}

\subjclass[2010]{Primary: 42B25; Secondary: 46E30}


\maketitle

\section{Introduction}
Vector-valued extensions of operators prevalent in the theory of harmonic analysis have been actively studied in the past decades. A centerpoint of the theory is the result of Burkholder \cite{Bu83} and Bourgain \cite{Bo83} that the Hilbert transform on $L^p(\R)$ extends to a bounded operator on $L^p(\R;X)$ if and only if the Banach space $X$ has the so-called $\UMD$ property. From this connection one can derive the boundedness of the vector-valued extension of many operators in harmonic analysis, like Fourier multipliers and Littlewood--Paley operators.

In the specific case where $X$ is a Banach function space, i.e. a lattice of functions over some measure space, very general extension theorems are known. These follow from the deep result of Bourgain \cite{Bo84} and Rubio de Francia \cite{Ru86} on the connection between the boundedness of the lattice Hardy--Littlewood maximal operator on $L^p(\R^d;X)$ and the $\UMD$ property of $X$.
The boundedness of the lattice Hardy--Littlewood maximal operator often allows one to use the scalar-valued arguments to show the boundedness of the vector-valued extension of an operator, using very elaborate Fubini-type techniques. Moreover it connects the extension problem to the theory of Muckenhoupt weights.
 Combined this enabled Rubio de Francia to show a very general extension principle in \cite{Ru86}, yielding vector-valued extensions for operators on $L^p(\R^d)$ satisfying bounds with respect to these Muckenhoupt weights to any $\UMD$ Banach function space.
This result was  subsequently extended by Amenta, Veraar and the first author in \cite{ALV17} to a rescaled setting and by both authors in \cite{LN19} to a multilinear setting. In this latter result, sufficient conditions were given to extend a bounded multilinear operator
$$T\colon L^{p_1}(\R^d,w_1)\times\cdots\times L^{p_m}(\R^d,w_m) \to L^{p}(\R^d,w)$$
to a bounded multilinear operator
$$\widetilde{T}\colon L^{p_1}(\R^d,w_1;X_1)\times\cdots\times L^{p_m}(\R^d,w_m;X_m) \to L^{p}(\R^d,w;X),$$
where each of the (quasi-)Banach function spaces $X_j$ satisfies some rescaled $\UMD$ condition and each weight $w_j$ some Muckenhoupt condition.

From the point of view of weights, it was made clear by Li, Martell, and Ombrosi in \cite{LMO18} that rather than assuming a condition on each individual weight, it is more appropriate to consider the multilinear weight classes characterized by the multisublinear analogue of the Hardy Littlewood maximal operator, introduced by Lerner, Ombrosi, P\'erez, Torres, and Trujillo-Gonz\'alez in \cite{LOPTT09}. Subsequently, through the extrapolation theorems of the second author \cite{Ni19} and Li, Martell, Martikainen, Ombrosi, and Vuorinen \cite{LMMOV19}, it was shown that these weight classes allow one to handle vector-valued extensions with Banach function spaces outside of the class of $\UMD$ spaces, such as $\ell^\infty$. However, these methods do not exceed the example of iterated $L^q$-spaces.

\bigskip

Our main goal is to prove a multilinear extension theorem in which we use the multilinear structure to its fullest. Just as for the weights, we will impose a condition on the tuple of Banach function spaces $(X_1,\ldots, X_m)$ rather than a condition on each $X_j$ individually. In parallel to the weighted theory, we will introduce this condition using the boundedness of a certain rescaled multisublinear Hardy-Littlewood maximal operator. In the linear case this condition reads as follows:
\[
\nrmb{\widetilde{M}_{(1,1)}(f,g)}_{L^1(\R^d;L^1(\Omega))}\lesssim\|f\|_{L^p(\R^d;X)}\|g\|_{L^{p'}(\R^d;X^\ast)}
\]
for all $f \in L^p(\R^d;X)$, $g \in L^{p'}(\R^d;X^\ast)$ and some $p \in (1,\infty)$, where $\widetilde{M}_{(1,1)}$ is the bisublinear lattice maximal operator introduced in Section~\ref{section:multiHL}. In Section~\ref{section:multiUMD} we will show that this condition is equivalent to the $\UMD$ condition for Banach function spaces and motivated by this result, we will call our multilinear analog a multilinear $\UMD$ condition, although our definition only makes sense for tuples of Banach \emph{function} spaces.

Both the Banach function space extension principle from \cite{ALV17, LN19,Ru86} and the iterated $L^q$-space extension principle using the extrapolation results in \cite{LMMOV19, Ni19} use the weighted boundedness of a multilinear operator
$$T\colon L^{p_1}(\R^d,w_1)\times\cdots\times L^{p_m}(\R^d,w_m) \to L^{p}(\R^d,w)$$
to deduce the weighted boundedness of its extension
$$\widetilde{T}\colon L^{p_1}(\R^d,w_1;X_1)\times\cdots\times L^{p_m}(\R^d,w_m;X_m) \to L^{p}(\R^d,w;X).$$
Initiated by the sparse representation and domination results for Calder\'on--Zygmund operators of Hyt\"onen \cite{Hy12} and Lerner \cite{Le13a}, such weighted bounds for an operator $T$ are in recent years  often deduced from a sparse domination result for $T$. So to deduce the weighted boundedness of the vector-valued extension $\widetilde{T}$ of an operator $T$ one typically goes through implications (\hyperref[arrows]{1}) and (\hyperref[arrows]{3}) in the following diagram\phantomsection\label{arrows}
\begin{center}
  \begin{tikzpicture}
  \node (SD)[text width=4.5cm] at (0,2) {{\parbox{4.2cm}{Sparse domination for $T$}}};
  \node (VVSD)[text width=4.5cm] at (0,0) {{\parbox{4.2cm}{Sparse domination for $\widetilde{T}$}}};
  \node (WB)[text width=4.5cm]  at (6,2) {{\parbox{4cm}{Weighted bounds for $T$}}};
  \node (VVWB)[text width=4.5cm]  at (6,0) {{\parbox{4cm}{Weighted bounds for $\widetilde{T}$}}};

  \draw[-implies,double equal sign distance, shorten <=1pt, shorten >=5pt] (SD) to node [shift={(-0.1,-0.4)}]{(1) }  (WB) ;
  \draw[-implies,double equal sign distance, shorten <=1pt, shorten >=5pt] (VVSD) to node [shift={(-0.1,0.4)}]{(4)} (VVWB);
  \draw[-implies,double equal sign distance, shorten <=5pt, shorten >=5pt] (SD) to node [shift={(0.4,0)}]{(2)} (VVSD);
  \draw[-implies,double equal sign distance, shorten <=5pt, shorten >=5pt] (WB) to node [shift={(-0.4,0)}]{(3)} (VVWB);
\end{tikzpicture}
\end{center}
in which arrows (\hyperref[arrows]{1}) and (\hyperref[arrows]{4}) are well-known and unrelated to the operator $T$, see e.g \cite{LMS14, LN15, LMO18, Ni19}.
In this paper we will deduce the weighted boundedness of the vector-valued extension $\widetilde{T}$ of $T$ through implications (\hyperref[arrows]{2}) and (\hyperref[arrows]{4}) in this diagram. In particular we will show that
scalar-valued sparse domination implies vector-valued sparse domination (implication (\hyperref[arrows]{2})) with respect to tuples of spaces satisfying our multilinear $\UMD$-condition. Such a result was established by Culiuc, Di Plinio, and Ou in \cite{CDO17} for  $\ell^q$-spaces with $q\geq 1$, which in particular satisfy our multilinear $\UMD$ condition. We point out that even in the linear case $m=1$ the result of obtaining vector-valued extensions of operators in $\UMD$ Banach function spaces from sparse domination without appealing to a Rubio de Francia type extrapolation theorem is  new.

The advantage of the route through implications (\hyperref[arrows]{2}) and (\hyperref[arrows]{4}) over the route through implications (\hyperref[arrows]{1}) and (\hyperref[arrows]{3}) is that for general tuples of quasi-Banach function spaces the Fubini-type techniques needed for implication (\hyperref[arrows]{2}) are a lot less technical than the ones needed for implication (\hyperref[arrows]{3}). Moreover implication (\hyperref[arrows]{4}) yields quantitative and in many cases sharp weighted estimates for $\widetilde{T}$, while the weight dependence in the arguments used for implication (\hyperref[arrows]{3}) is not easily tracked and certainly not sharp.
 A downside of our approach through implications (\hyperref[arrows]{2}) and (\hyperref[arrows]{4}) is the fact that we need sparse domination for $T$ as a starting point, while one only needs weighted bounds in order to apply (\hyperref[arrows]{3}). It therefore remains an interesting open problem whether (\hyperref[arrows]{3}) also holds under our multilinear $\UMD$-condition, rather than a $\UMD$ condition on each individual space as in \cite{LN19}.

\bigskip

 Our main result relies on the following two key ingredients:
 \begin{itemize}
   \item The equivalence between sparse form and the $L^1$-norm of the multisublinear maximal function. This equivalence seems to have been used for the first time in \cite{CDO17} by Culiuc, Di Plinio, and Ou.
   \item  A sparse domination result for the multisublinear lattice maximal operator under the multilinear $\UMD$ condition assumption. For this we extend the idea of H\"anninen and the first author in \cite{HL17}, where a linear version of this result was obtained.
 \end{itemize}
Combining these ingredients, we obtain the following theorem in the linear case:
\begin{theorem}\label{theorem:mainlinear}
  Let $T$ be an operator such that for any $f,g \in L^\infty_c(\R^d)$ there exists a sparse collection of cubes $\mc{S}$ such that
  \begin{equation*}
    \int_{\R^d}\!|Tf|\cdot |g| \dd x \leq C_T\, \sum_{Q\in \mc{S}}\ip{f}_{1,Q}  \ip{g}_{1,Q} \abs{Q}
  \end{equation*}
  Let $X$ be a $\UMD$ Banach function space over a  measure space $(\Omega,\mu)$ and suppose that for any simple function  $f \in L^\infty_c(\R^d;X)$ the function $\widetilde{T}f:\R^d \to X$ given by
  \begin{equation*}
    \widetilde{T}f(x,\omega):= T(f(\cdot,\omega))(x), \qquad (x,\omega) \in \R^d \times \Omega
  \end{equation*}
  is well-defined and strongly measurable. Then for all simple functions $f \in L^\infty_c(\R^d, X)$  and $g \in L^\infty_c(\R^d)$ there exists a sparse collection of cubes $\mc{S}$ such that
  \begin{equation*}
    {\int_{\R^d}\!\|\widetilde{T}f\|_X\cdot \abs{g} \dd x} \lesssim_{X} C_T \sum_{Q\in \mc{S}} \ipb{\nrm{f}_X}_{1,Q} \ipb{\abs{g}}_{1,Q}\abs{Q} .
  \end{equation*}
\end{theorem}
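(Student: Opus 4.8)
The plan is to transfer the scalar sparse domination to $\widetilde{T}$ by dualising the Banach function space $X$, running the scalar hypothesis fibrewise over $\Omega$, and then invoking the two ingredients announced in the introduction. \emph{Dualisation.} Since $X$ is $\UMD$ it is reflexive, so $X^{*}=X'$ norms $X$ and for a.e.\ $x$ the element $\widetilde{T}f(x)\in X$ admits a norming functional; using that $\widetilde{T}f$ is strongly measurable, a measurable-selection argument produces a strongly measurable $b\colon\R^{d}\to X^{*}$ with $\nrm{b(x)}_{X^{*}}\le1$ and $\ipb{\widetilde{T}f(x),b(x)}=\nrm{\widetilde{T}f(x)}_{X}$ a.e. Set $G(x,\omega):=\abs{g(x)}\,b(x,\omega)$, so that $G\in L^{\infty}_{c}(\R^{d};X^{*})$ with $\nrm{G(x)}_{X^{*}}\le\abs{g(x)}$, and observe that pointwise in $x$
\[
\nrm{\widetilde{T}f(x)}_{X}\abs{g(x)}=\ipb{\widetilde{T}f(x),G(x)}=\int_{\Omega}T(f(\cdot,\omega))(x)\,G(x,\omega)\dd\mu(\omega)\le\int_{\Omega}\absb{T(f(\cdot,\omega))(x)}\,\absb{G(x,\omega)}\dd\mu(\omega).
\]

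\emph{Fibrewise scalar sparse domination.} Fix $\omega\in\Omega$. As $f$ is simple, $f(\cdot,\omega)\in L^{\infty}_{c}(\R^{d})$, and after a routine truncation of $b$ (replace it fibrewise by $b\,\ind_{\{\abs{b}\le N\}}$, still in the unit ball of $X^{*}$, and undo it by monotone convergence once the bound below no longer sees $N$) we may take $G(\cdot,\omega)\in L^{\infty}_{c}(\R^{d})$. The hypothesis on $T$ then gives a sparse collection $\mc{S}_{\omega}$ with $\int_{\R^{d}}\absb{T(f(\cdot,\omega))}\,\absb{G(\cdot,\omega)}\dd x\le C_{T}\sum_{Q\in\mc{S}_{\omega}}\ip{f(\cdot,\omega)}_{1,Q}\ip{G(\cdot,\omega)}_{1,Q}\abs{Q}$. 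Since $\ip{f(\cdot,\omega)}_{1,Q}\ip{G(\cdot,\omega)}_{1,Q}\le\widetilde{M}_{(1,1)}(f,G)(x,\omega)$ for every $x\in Q$, with $\widetilde{M}_{(1,1)}$ the bisublinear lattice maximal operator of Section~\ref{section:multiHL}, summing over the pairwise disjoint major subsets of $\mc{S}_{\omega}$ gives the elementary half of the sparse-form/maximal-function equivalence of \cite{CDO17}:
\[
\sum_{Q\in\mc{S}_{\omega}}\ip{f(\cdot,\omega)}_{1,Q}\ip{G(\cdot,\omega)}_{1,Q}\abs{Q}\lesssim\int_{\R^{d}}\widetilde{M}_{(1,1)}(f,G)(x,\omega)\dd x.
\]
Integrating in $\omega$, combining with the display of the previous paragraph, and using Tonelli (the finiteness needed is supplied a posteriori, or one first restricts $x$ to a compact set and exhausts $\R^{d}$) we reach $\int_{\R^{d}}\nrm{\widetilde{T}f}_{X}\abs{g}\dd x\lesssim C_{T}\,\nrmb{\widetilde{M}_{(1,1)}(f,G)}_{L^{1}(\R^{d};L^{1}(\Omega))}$.

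\emph{Sparse domination of the lattice maximal operator.} Since $X$ is a $\UMD$ Banach function space it satisfies the multilinear $\UMD$ condition of Section~\ref{section:multiUMD}, and hence the multilinear extension of the result of H\"anninen and the first author \cite{HL17} provides a sparse collection $\mc{S}$ with $\nrmb{\widetilde{M}_{(1,1)}(f,G)}_{L^{1}(\R^{d};L^{1}(\Omega))}\lesssim_{X}\sum_{Q\in\mc{S}}\ipb{\nrm{f}_{X}}_{1,Q}\ipb{\nrm{G}_{X^{*}}}_{1,Q}\abs{Q}$. As $\nrm{G(x)}_{X^{*}}\le\abs{g(x)}$, we have $\ipb{\nrm{G}_{X^{*}}}_{1,Q}\le\ip{g}_{1,Q}$, so chaining the estimates of the two previous paragraphs — and then removing the truncation of $b$ and the compact exhaustion by monotone convergence — yields the asserted sparse bound, with $\mc{S}$ the collection just produced.

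\emph{Where the difficulty lies.} The whole weight of the argument rests on the penultimate step: sparse domination of the bisublinear lattice maximal operator under the multilinear $\UMD$ condition, which uses the $\UMD$ structure of $X$ beyond the mere $L^{1}(L^{1})$-boundedness of $\widetilde{M}_{(1,1)}$ and requires a stopping-time decomposition tailored to the lattice averages $\nrm{f}_{X}$ and $\nrm{G}_{X^{*}}$, extending \cite{HL17} from the linear to the multilinear setting. The remaining pieces — the measurable selection of $b$, the truncations, and the Fubini interchanges — are routine, and the passage from $T$ to $\widetilde{T}$ makes no further use of the structure of $T$.
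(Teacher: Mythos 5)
Your proposal is correct and follows essentially the same route as the paper: fibrewise application of the scalar sparse bound via Fubini, the sparse-form/maximal-function equivalence of \cite{CDO17}, and, as the crucial ingredient, sparse domination of the bisublinear lattice maximal operator under the condition $(X,X^\ast)\in\HL_{(1,1)}$ (the paper's Theorem~\ref{theorem:sparsedomHL}, i.e.\ the multilinear extension of \cite{HL17}), which is exactly how the paper derives Theorem~\ref{theorem:mainlinear} through Theorem~\ref{thm:mainthm1}. The only cosmetic difference is that you dualize pointwise via a (measurably selected, or more simply $\epsilon$-norming simple) functional $b$, whereas the paper dualizes at the level of $L^1(\R^d;X)$ against $L^\infty(\R^d;X^\ast)$ in Proposition~\ref{prop:vvsdequivalence}; your truncation/exhaustion bookkeeping is fine since the final sparse collection comes from the lattice-maximal step applied to the untruncated $G$.
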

 Note that if $T$ is linear, then for any simple $f \in L^\infty_c(\R^d)$ we have $\widetilde{T}f = (T\otimes I_X) f$, which is always well-defined and strongly measurable.

Theorem \ref{theorem:mainlinear} can be generalized to operators that are sparsely dominated by forms
\begin{equation*}
  \sum_{Q\in \mc{S}} \ip{f}_{r,Q} \ip{g}_{s',Q} \abs{Q},
\end{equation*}
for $0<r<s\leq \infty$, in which case we have to replace the $\UMD$ condition on $X$ by the rescaled $\UMD_{r,s}$ condition, see Section~\ref{section:multiUMD}. A rescaled $\UMD$ condition was already used in the previous work \cite{LN19} where the condition that $((X^r)^*)^{(s'/r)'}$ has the $\UMD$ property was imposed. However, the condition $X\in\UMD_{r,s}$ we impose in this work is seemingly weaker if $r \geq 1$ and we refer to Proposition~\ref{proposition:comparisonoldassumption} for a comparison.

For the multilinear version of this result we require the $\UMD_{\vec{r},s}$ condition imposed on an $m$-tuple of Banach spaces $\vec{X}=(X_1,\ldots,X_m)$, which will be introduced in Section \ref{section:multiUMD}. This yields the following more general version of Theorem \ref{theorem:mainlinear}, which is our main result.
\begin{theorem}\label{theorem:mainmultilinear}
Let $\vec{{r}}\in(0,\infty)^{m}$, $s \in (1,\infty]$ and let $T$ be an operator defined on $m$-tuples of functions such that for any $\vec{f},g \in L^\infty_c(\R^d)$  there exists a sparse collection $\mc{S}$ such that
\begin{equation*}
  \int_{\R^d}\! |T(\vec{f}\hspace{2pt})| \cdot |g| \dd x \leq C_T\, \sum_{Q\in \mc{S}}\Big( \prod_{j=1}^m\ip{f_j}_{r_j,Q}\Big) \ip{g}_{s',Q}|Q|.
\end{equation*}
Let $\vec{X}$ be an $m$-tuple of Banach function spaces over a  measure space $(\Omega,\mu)$ such that $\vec{X} \in \UMD_{\vec{r},s}$ and suppose that for all simple functions $\vec{f} \in L^\infty_c(\R^d, \vec{X})$ the function $\widetilde{T}(\vec{f}\hspace{2pt}):\R^d \to X$ given by
  \begin{equation*}
    \widetilde{T}(\vec{f}\hspace{2pt})(x,\omega):= T(\vec{f}(\cdot,\omega))(x), \qquad (x,\omega) \in \R^d \times \Omega
  \end{equation*}
  is well-defined and strongly measurable. Let $X:=\prod_{j=1}^m X_j$. Then for all simple functions $\vec{f} \in L^\infty_c(\R^d, \vec{X})$  and $g \in L^\infty_c(\R^d)$ there exists a sparse collection of cubes $\mc{S}$ such that
  \begin{equation*}
    \int_{\R^d}\!\|\widetilde{T}(\vec{f}\hspace{2pt})\|_X\cdot \abs{g} \dd x \lesssim_{\vec{X},\vec{r},s} C_T \sum_{Q\in \mc{S}} \Big(\prod_{j=1}^m\ip{\nrm{f_j}_{X_j}}_{r_j,Q}\Big) \ipb{\nrm{g}_{X^*}}_{s',Q}\abs{Q}.
  \end{equation*}
\end{theorem}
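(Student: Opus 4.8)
The plan is to combine the two ingredients emphasised in the introduction: the elementary estimate of a sparse form by the $L^1$-norm of the associated multisublinear maximal function, and the sparse domination of the multisublinear lattice maximal operator $\widetilde{M}_{\vec{r},s}$ (introduced in Section~\ref{section:multiHL}) available under the hypothesis $\vec{X}\in\UMD_{\vec{r},s}$, proved in Section~\ref{section:multiUMD}. The first step is to linearize the left-hand side. Since $X=\prod_{j=1}^m X_j$ is a Banach function space over $(\Omega,\mu)$ with K\"othe dual $X^*$, and $\widetilde{T}(\vec{f}\hspace{2pt})\colon\R^d\to X$ is strongly measurable while $g\in L^\infty_c(\R^d)$, a measurable selection together with a standard truncation and approximation produces a \emph{simple} function $h\in L^\infty_c(\R^d;X^*)$ with $\nrm{h(x)}_{X^*}\le\abs{g(x)}$ and $\supp h\subseteq\supp g$, for which
\[
\int_{\R^d}\nrmb{\widetilde{T}(\vec{f}\hspace{2pt})(x)}_X\abs{g(x)}\dd x\leq \int_{\R^d}\ipb{\widetilde{T}(\vec{f}\hspace{2pt})(x),h(x)}\dd x+\varepsilon,
\]
the error $\varepsilon>0$ being arbitrary and sent to $0$ at the end. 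Writing the pairing as an integral over $\Omega$ and applying Fubini --- its use justified \emph{a posteriori} by the finiteness coming from the $\UMD_{\vec{r},s}$-bound below --- this is at most $\int_\Omega\int_{\R^d}\absb{T(\vec{f}(\cdot,\omega))(x)}\,\absb{h(x,\omega)}\dd x\dd\mu(\omega)+\varepsilon$.

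For $\mu$-a.e.\ fixed $\omega$ the scalar functions $f_1(\cdot,\omega),\dots,f_m(\cdot,\omega),h(\cdot,\omega)$ all lie in $L^\infty_c(\R^d)$, so the scalar hypothesis yields a sparse collection $\mc{S}_\omega$ dominating $\int_{\R^d}\absb{T(\vec{f}(\cdot,\omega))}\,\absb{h(\cdot,\omega)}\dd x$. Since the sets $\{E_Q\}_{Q\in\mc{S}_\omega}$ are pairwise disjoint with $\abs{E_Q}\gtrsim\abs{Q}$, the resulting sparse form is at most a constant (depending only on the sparseness parameter, hence uniform in $\omega$) times $\int_{\R^d}\widetilde{M}_{\vec{r},s}(f_1(\cdot,\omega),\dots,f_m(\cdot,\omega),h(\cdot,\omega))(x)\dd x$; this removes the $\omega$-dependent --- and a priori non-measurable --- collection $\mc{S}_\omega$. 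The key observation is that, because all averages defining $\widetilde{M}_{\vec{r},s}$ are taken in the pointwise-in-$\omega$ lattice sense, for every $x$ and $\mu$-a.e.\ $\omega$ one has the identity
\[
\widetilde{M}_{\vec{r},s}(f_1(\cdot,\omega),\dots,f_m(\cdot,\omega),h(\cdot,\omega))(x)=\bigl[\widetilde{M}_{\vec{r},s}(\vec{f},h)(x)\bigr](\omega).
\]
Integrating in $\omega$ and then in $x$ (Tonelli) therefore gives
\[
\int_{\R^d}\nrmb{\widetilde{T}(\vec{f}\hspace{2pt})}_X\abs{g}\dd x\lesssim C_T\int_{\R^d}\nrmb{\widetilde{M}_{\vec{r},s}(\vec{f},h)(x)}_{L^1(\Omega)}\dd x+\varepsilon.
\]

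It remains to invoke the sparse domination of $\widetilde{M}_{\vec{r},s}$ under $\vec{X}\in\UMD_{\vec{r},s}$ from Section~\ref{section:multiUMD} (the multisublinear extension of the H\"anninen--Lorist argument): it provides a sparse collection $\mc{S}$, depending on $\vec{f}$ and $h$, with
\[
\int_{\R^d}\nrmb{\widetilde{M}_{\vec{r},s}(\vec{f},h)(x)}_{L^1(\Omega)}\dd x\lesssim_{\vec{X},\vec{r},s}\sum_{Q\in\mc{S}}\Big(\prod_{j=1}^m\ipb{\nrm{f_j}_{X_j}}_{r_j,Q}\Big)\ipb{\nrm{h}_{X^*}}_{s',Q}\abs{Q}.
\]
Since $\nrm{h(x)}_{X^*}\le\abs{g(x)}$ we have $\ipb{\nrm{h}_{X^*}}_{s',Q}\le\ip{g}_{s',Q}=\ipb{\nrm{g}_{X^*}}_{s',Q}$, and combining the displayed estimates and letting $\varepsilon\to0$ gives the asserted vector-valued sparse bound.

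Granting the sparse domination of $\widetilde{M}_{\vec{r},s}$ --- the genuine analytic input, which is isolated in Section~\ref{section:multiUMD} --- I expect the main obstacle to be the measurability and integrability bookkeeping in the linearization step: constructing $h$ as a simple function in $L^\infty_c(\R^d;X^*)$ that controls the $X^*$-norm and (nearly) realizes the duality pairing with $\widetilde{T}(\vec{f}\hspace{2pt})(x)$, and justifying the use of Fubini before the relevant iterated integrals are known to be finite. The one structural point that makes the passage from the scalar to the lattice maximal operator work --- and hence the whole scheme go through --- is the pointwise identity displayed above, which holds precisely because the lattice averages act pointwise in $\omega$.
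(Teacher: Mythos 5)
Your proposal is correct and follows essentially the same route as the paper: trade the scalar sparse forms for the $L^1$-norm of the multisublinear maximal function (Proposition~\ref{prop:sparsequiv}, i.e.\ \eqref{eq:equivalencespmax}), apply the scalar hypothesis slice-wise in $\omega$ via Fubini, and conclude with the sparse domination of the lattice maximal operator (Theorem~\ref{theorem:sparsedomHL}) for the $(m+1)$-tuple $(\vec{X},X^*)$, which is exactly what $\vec{X}\in\UMD_{\vec{r},s}$ supplies; this is the paper's proof of Theorem~\ref{thm:mainthm1} specialized to $q=1$. The only difference is cosmetic: you perform the duality step up front with a near-optimal simple $h\in L^\infty_c(\R^d;X^*)$, whereas the paper runs the estimate for $X^*$-valued test functions and converts via Proposition~\ref{prop:vvsdequivalence} at the end (and, since your sparse collection depends on $h$ and hence on $\varepsilon$, you should let $\varepsilon\to0$ against the $\varepsilon$-independent bound $\nrm{M_{(\vec{r},s')}(\|\vec{f}\|_{\vec{X}},g)}_{L^1(\R^d)}$ and only then extract a single sparse collection).
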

Note that we only allow Banach function spaces in Theorem \ref{theorem:mainmultilinear}. However, in the multilinear setting it is natural to expect estimates also for quasi-Banach function spaces. We are able to consider these spaces by the more general Theorem~\ref{thm:mainthm1}, which is facilitated by introducing a rescaling parameter. For a discussion on this we refer the reader to Section~\ref{sec:applymain}.

By the known sharp weighted bounds for the sparse forms we can deduce vector-valued weighted bounds as a corollary from our main result, which is the extension theorem we were after.
Note that these bounds are new even in the case $\vec{w} \equiv 1$.

\begin{corollary}\label{corollary:weights1}
Assume the conditions of Theorem \ref{theorem:mainmultilinear} and additionally suppose that $T$ is either $m$-linear or positive-valued $m$-sublinear. Then for all $p \in (0,\infty]^m$ with $\vec{p}>\vec{r}$ and $\frac{1}{p} := \sum_{j=1}^m\frac{1}{p_j}>\frac{1}{s}$, and all $\vec{w}\in A_{\vec{p},(\vec{r},s)}$ we have
\begin{equation*}
\nrmb{\widetilde{T}(\vec{f}\hspace{2pt})}_{L_w^p(\R^d;X)}\lesssim_{\vec{X},\vec{p},q,\vec{r},s}C_T\,[\vec{w}]^{\max\cbraces{\frac{\frac{1}{\vec{r}}}{\frac{1}{\vec{r}}-\frac{1}{\vec{p}}},\frac{1-\frac{1}{s}}{\frac{1}{p}-\frac{1}{s}}}}_{\vec{p},(\vec{r},s)}\prod_{j=1}^m\|f_j\|_{L^{p_j}_{w_j}(\R^d;X_j)}
\end{equation*}
for all $\vec{f}\in L^{\vec{p}}_{\vec{w}}(\R^n;\vec{X})$.
\end{corollary}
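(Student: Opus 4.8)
The plan is to obtain the weighted bound directly from the vector-valued sparse domination of Theorem~\ref{theorem:mainmultilinear}, fed into the known sharp weighted estimates for scalar multilinear sparse forms, and then to remove the restriction to simple functions by density.

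\emph{Step 1: reduction to a scalar sparse form.} Fix simple functions $\vec f\in L^\infty_c(\R^d,\vec X)$ and a test function $g$. Theorem~\ref{theorem:mainmultilinear} produces a sparse collection $\mc S$ (depending on $\vec f$ and $g$) with
\[
\int_{\R^d}\!\nrm{\widetilde T(\vec f)}_X\,\abs{g}\dd x\ \lesssim_{\vec X,\vec r,s}\ C_T\sum_{Q\in\mc S}\Big(\prod_{j=1}^m\ip{\nrm{f_j}_{X_j}}_{r_j,Q}\Big)\ipb{\nrm{g}_{X^*}}_{s',Q}\abs{Q}.
\]
The right-hand side is the scalar $(m+1)$-linear sparse form evaluated at the non-negative functions $\nrm{f_1}_{X_1},\dots,\nrm{f_m}_{X_m}$ and $\nrm{g}_{X^*}$, and one has $\nrmb{\nrm{f_j}_{X_j}}_{L^{p_j}_{w_j}(\R^d)}=\nrm{f_j}_{L^{p_j}_{w_j}(\R^d;X_j)}$. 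Thus the $X$-valued problem is reduced to the scalar problem of bounding these sparse forms in weighted Lebesgue spaces.

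\emph{Step 2: scalar weighted sparse bound and duality.} For $\vec p>\vec r$, $\frac1p=\sum_{j=1}^m\frac1{p_j}>\frac1s$, and $\vec w\in A_{\vec p,(\vec r,s)}$, the sharp weighted bound for multilinear sparse forms (see \cite{LMO18,LMMOV19,Ni19}) gives
\[
\sum_{Q\in\mc S}\Big(\prod_{j=1}^m\ip{\phi_j}_{r_j,Q}\Big)\ip{\psi}_{s',Q}\abs{Q}\ \lesssim_{\vec p,q,\vec r,s}\ [\vec w]_{\vec p,(\vec r,s)}^{\max\cbraces{\frac{\frac1{\vec r}}{\frac1{\vec r}-\frac1{\vec p}},\frac{1-\frac1s}{\frac1p-\frac1s}}}\Big(\prod_{j=1}^m\nrm{\phi_j}_{L^{p_j}_{w_j}(\R^d)}\Big)\nrm{\psi}_{L^{p'}_{\sigma}(\R^d)},
\]
uniformly over sparse $\mc S$ and non-negative $\phi_j,\psi$, where $\sigma$ is the dual weight of the product weight $w=\prod_{j=1}^m w_j^{p/p_j}$. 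Plugging Step~1 into this estimate with $\phi_j=\nrm{f_j}_{X_j}$ and $\psi=\nrm{g}_{X^*}$ and taking the supremum over admissible $g$ yields the asserted inequality for simple $\vec f$: when $p\ge1$ this supremum recovers $\nrm{\widetilde T(\vec f)}_{L^p_w(\R^d;X)}$ by duality, while for the remaining exponents (in particular $p<1$, which is allowed here since only $\frac1p>\frac1s$ is required) one passes instead through the operator version of the weighted sparse bound. It is exactly at this passage from the sparse \emph{form} to the sparse \emph{operator} estimate that the hypothesis that $T$ be $m$-linear or positive-valued $m$-sublinear is used, in the same manner as in the scalar theory, cf.\ \cite{LMO18,Ni19}; this is the only genuinely delicate point of the proof.

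\emph{Step 3: density.} The estimate of Step~2 holds uniformly on the dense subspace of simple functions in $L^{\vec p}_{\vec w}(\R^d;\vec X)$, and extends to arbitrary $\vec f\in L^{\vec p}_{\vec w}(\R^d;\vec X)$ by a routine approximation argument: use multilinearity (respectively positivity together with monotone convergence) to pass to the limit, and Fatou's lemma on $\int_{\R^d}\nrm{\widetilde T(\vec f)}_X^p\, w\dd x$ to control the left-hand side. Well-definedness and strong measurability of $\widetilde T(\vec f)$ needed along the way are inherited from the hypotheses of Theorem~\ref{theorem:mainmultilinear}.
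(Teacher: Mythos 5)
Your overall route---feed the vector-valued sparse form of Theorem~\ref{theorem:mainmultilinear} into the scalar weighted theory for sparse forms and then extend by density---is the same as the paper's, and in the dualizable range it works: the uniform form bound you quote in Step~2 is exactly what one gets from the equivalence of sparse forms with $\nrm{M_{(\vec r,s')}(\cdot)}_{L^1(\R^d)}$ together with Proposition~\ref{prop:multimaxwest}, and dualizing against $g$ recovers the corollary for $p\geq 1$. This is essentially what the paper does inside Proposition~\ref{prop:wfromsdscalar} (at an auxiliary tuple $\vec t$), and Theorem~\ref{thm:weightcor} with $q=1$ then gives the corollary.

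The genuine gap is the sub-dual range $p<1$, which the corollary does include (e.g.\ for $\vec r=\vec 1$ and $m\geq 2$ one has $p$ down to $1/m$). There is no ``operator version of the weighted sparse bound'' for you to pass through: all that is available for $\widetilde T$ is a \emph{form} domination, with a sparse family depending on both $\vec f$ and $g$, and $m$-linearity of $T$ does not upgrade this to a pointwise sparse domination of $\widetilde T$; in the scalar references you cite, the step from form bounds to operator bounds below the duality range is made by multilinear Rubio de Francia-type extrapolation, not by linearity. That is precisely how the paper proceeds: Proposition~\ref{prop:wfromsdscalar} combines the maximal-operator estimate of Proposition~\ref{prop:multimaxwest} at one fixed tuple $\vec t$ with the quantitative extrapolation result of \cite{Ni19} (Theorem~\ref{thm:qextrapolation}), applied to the scalar functions $\nrm{f_j}_{X_j}$ and $g$, which also delivers the sharp exponent $\max\bigl\{\tfrac{1/\vec r}{1/\vec r-1/\vec p},\tfrac{1-1/s}{1/p-1/s}\bigr\}$. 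You also misplace where the hypothesis that $T$ is $m$-linear or positive-valued $m$-sublinear enters: in the paper it is not used to convert forms into operators, but in the extension-by-density step. The sparse domination is only available for simple, compactly supported $\vec f$, and Lemma~\ref{lem:extlem} uses the $m$-(sub)linearity to get the local uniform continuity needed to extend $\widetilde T$ uniquely to $L^{\vec p}_{\vec w}(\R^d;\vec X)$, with Lemma~\ref{lem:dommultdense} supplying the (not entirely obvious, since the weights need not be locally integrable) density of such functions; your Step~3 gestures at this, but ``monotone convergence'' is not available for a general positive $m$-sublinear $T$, and the limit passage should instead go through the reverse-triangle-type estimate of Lemma~\ref{lem:extlem}. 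In short: your argument is complete for $p\geq 1$, but the case $p<1$ requires the extrapolation input (and the density step requires the extension lemma), neither of which your sketch supplies.
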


This paper is organized as follows:
\begin{itemize}
  \item In Section~\ref{sec:prelim} we discuss the preliminaries on product quasi-Banach function spaces, sparse forms, and multilinear weight classes.
  \item In Section~\ref{section:multiHL} we introduce a rescaled multilinear analogue of the Hardy-Littlewood property and prove sparse domination of the multisublinear Hardy-Littlewood maximal operator using this condition.
  \item In Section~\ref{section:multiUMD} we introduce a limited range multilinear $\UMD$ property for tuples of quasi-Banach function spaces.
  \item In Section~\ref{sec:proofs} we state and prove our main results.
  \item In Section~\ref{sec:applymain} we discuss how our results can be applied in the quasi-Banach range as well as prove new vector-valued bounds for various operators.
\end{itemize}

\subsection*{Notation}

We consider $\R^d$ with the Lebesgue measure $\mathrm{d}x$ and write $|E|$ for the measure of a measurable set $E\subseteq\R^d$. For $r \in (0,\infty)$, a function $f\in L^r_{\loc}(\R^d)$ and a measurable $E\subseteq \R^d$ of positive finite measure we use the notation
$$\langle f\rangle_{r,E}=\has{\frac{1}{\abs{E}}\int_E |f|^r\,\mathrm{d}x}^{\frac{1}{r}}.$$ Moreover, we denote by $\langle f\rangle_{\infty,E}$ the essential supremum of $|f|$ in $E$. We let $\ind_E$ denote the characteristic function of $E$.

Throughout the paper we write $C_{a,b,\cdots}$ to denote a constant, which only depends on
the parameters $a,b,\cdots$ and possibly on the dimension $d$ and multilinearity $m$. By $\lesssim_{a,b,\cdots}$ we mean that there is a constant $C_{a,b,\cdots}$ such that inequality holds and by $\eqsim_{a,b,\cdots}$ we mean that both $\lesssim_{a,b,\cdots}$ and $\gtrsim_{a,b,\cdots}$ hold.

\begin{convention}\label{conv:prod}
  For a vector $\vec{p}\in (0,\infty]^m$ we denote its coordinates by $p_1,\cdots,p_m$ and set $\frac{1}{p}:=\sum_{j=1}^m \frac{1}{p_j}$. We denote $\max{\vec{p}} := \max\cbrace{p_1,\cdots,p_m}$ and
 $$L^{\vec{p}}(\R^d) := L^{p_1}(\R^d)\times \cdots\times L^{p_m}(\R^d).$$
For $\vec{q}\in (0,\infty)^m$ we write $\vec{p}\leq\vec{q}$ if $p_j\leq q_j$ and $\vec{p}>\vec{q}$ if $p_j>q_j$ for  $1\leq j\leq m$. We define arithmetic operations on $\vec{p}$ and $\vec{q}$ coordinate wise, e.g. $\vec{p}/\vec{q}:=(p_1/q_1,\ldots,p_m/q_m)$, and similarly $\vec{p}\hspace{2pt}^\alpha=(p_1^\alpha,\ldots,p_m^\alpha)$ for $\alpha>0$.

For an $m$-tuple of quasi-Banach function spaces $\vec{X}=(X_1,\cdots,X_m)$ and  we will write $X:=\prod_{j=1}^m X_j$, for $\vec{p}\in (0,\infty)$ and for a vector of $m$ weights $\vec{w}=(w_1,\cdots,w_m)$ (see Section \ref{subsec:sparse}) we write $w:=\prod_{j=1}^m w_j$.
Moreover we will use the shorthand notation
$$L^{\vec{p}}_{\vec{w}}(\R^d;\vec{X}) := L^{p_1}_{w_1}(\R^d;X_1)\times \cdots \times L^{p_m}_{w_m}(\R^d;X_m)$$
and $L^{\vec{p}}_{\loc}(\R^d;\vec{X})$ is defined similarly. We say $\vec{X}$ is $\vec{r}$-convex for $\vec{r}\in (0,\infty)^m$ if $X_j$ is $r_j$-convex for $1\leq j\leq m$.

\end{convention}

\section{Preliminaries}\label{sec:prelim}
\subsection{Product quasi-Banach function spaces}\label{sec:BFS}
Let $(\Omega,\mu)$ be a $\sigma$-finite measure space. An order ideal $X \subseteq L^0(\Omega)$ equipped with a quasi-norm $\nrm{\,\cdot\,}_X$ is called a \emph{quasi-Banach function space} if it satisfies the following properties
\begin{itemize}
  \item \textit{Compatibility:} If $\xi,\eta \in X$ with $\abs{\xi}\leq \abs{\eta}$, then $\nrm{\xi}_X\leq \nrm{\eta}_X$.
  \item \textit{Weak order unit:} There is a $\xi \in X$ with $\xi > 0$ a.e.
  \item \textit{Fatou property:} If $0\leq \xi_n \uparrow \xi$ for $(\xi_n)_{n=1}^\infty$ in $X$ and $\sup_{n\in \N}\nrm{\xi_n}_X<\infty$, then $\xi \in X$ and $\nrm{\xi}_X=\sup_{n\in\N}\nrm{\xi_n}_X$.
\end{itemize}
If $\nrm{\,\cdot\,}_X$ is a norm then $X$ is called a \emph{Banach function space}.

A quasi-Banach function space $X$ is called \emph{order-continuous} if for any sequence $0\leq \xi_n\uparrow \xi \in X$ we have $\nrm{\xi_n-\xi}_X \to 0$. As an example we note that all reflexive Banach function spaces are order-continuous.
Order-continuity of $X$ ensures that the Bochner space $L^p(\R^d;X)$ for $p \in (0,\infty)$ coincides with the \emph{mixed-norm space} of all measurable functions $\R^d\times \Omega \to \C$ such that
\begin{equation*}
  \nrmb{x\mapsto\nrm{f(x,\cdot)}_X}_{L^p(\R^d)} <\infty,
\end{equation*}
which is again a quasi-Banach function space. Moreover if $X$ is an order-continuous Banach function space, then its dual $X^*$ is also a Banach function space. For an introduction to Banach function spaces we refer the reader to \cite[Section 1.b]{LT79} or \cite{BS88}.

We call a quasi-Banach function space $p$-convex for $p \in (0,\infty)$ if for any $\xi_1,\cdots,\xi_n \in X$ we have
\begin{equation*}
  \nrms{\has{\sum_{k=1}^n\abs{\xi_k}^p}^{1/p}}_X \leq \has{\sum_{k=1}^n \nrm{\xi_k}^p}^{1/p}.
\end{equation*}
Often a constant is allowed in the defining inequality for $p$-convexity, but as shown in \cite[Theorem 1.d.8]{LT79} $X$ can always be renormed equivalently such that this constant equals $1$. The \emph{$p$-concavification} of $X$ for $p \in (0,\infty)$ is defined as
\begin{equation*}
  X^p:= \cbraceb{\abs{\xi}^p\sgn{\xi}:\xi \in X} = \cbraceb{\xi \in L^0(\Omega):\abs{\xi}^{1/p} \in X}
\end{equation*}
equipped with the quasinorm $\nrm{\xi}_{X^p}:= \nrm{\abs{\xi}^{1/p}}_X^p$. Note that $\nrm{\,\cdot\,}_{X^p}$ is a norm if and only if $X$ is $p$-convex. In particular for $f \in L^p_{\loc}(\R^d;X)$ and a set $E\subseteq \R^d$ of finite measure the $p$-convexity of $X$ ensures that $\ip{\abs{f}}_{p,E}$ is well-defined as a Bochner integral.
See \cite[Section 1.d]{LT79} and \cite{Ka84} for a further introduction to $p$-convexity and related notions.

For $m$ quasi-Banach function spaces $X_1,\cdots,X_m$ over the same measure space we define the product space
\begin{equation*}
  \prod_{j=1}^m X_j := \cbraces{\xi:\abs{\xi} \leq \prod_{j=1}^m \xi_j\text{ for some } 0\leq \xi_j \in X_j,\,1\leq j\leq m},
\end{equation*}
which can easily seen to be a vector space and for $\xi \in \prod_{j=1}^m X_j$ we define
\begin{equation*}
  \nrm{\xi}_{\prod_{j=1}^m X_j} := \inf \cbraces{\prod_{j=1}^m \nrm{\xi_j}_{X_j}:\abs{\xi} \leq \prod_{j=1}^m \xi_j, 0\leq \xi_j \in X_j,\,1\leq j\leq m}.
\end{equation*}
We call $\vec{X} = (X_1,\cdots,X_m)$ an \emph{$m$-tuple of quasi-Banach function spaces} if $X_1,\cdots,X_m$ are quasi-Banach function spaces over the same measure space and the product $\prod_{j=1}^m X_j$ equipped with the norm
 $\nrm{\,\cdot\,}_{\prod_{j=1}^m X_j}$ is also quasi-Banach function space. We refer to \cite{Ca64,Lo69,Sc10} for background on product Banach function spaces.
Let us give a few examples:
\begin{proposition}\label{example:products}
  Let $(\Omega,\mu)$ be a $\sigma$-finite measure space.
\begin{enumerate}[(i)]
  \item \label{it:product0} For any quasi-Banach function space $X$ we have $X\cdot L^\infty(\Omega)=X$.
  \item \label{it:product1} \textit{Lebesgue spaces:} $L^p(\Omega) = \prod_{j=1}^m L^{p_j}(\Omega)$ for $\vec{p} \in (0,\infty]^m$.
  \item \label{it:product2} \textit{Lorentz spaces:} $L^{p,q}(\Omega) = \prod_{j=1}^m L^{p_j,q_j}(\Omega)$ for $\vec{p}\in(0,\infty)^m$, $\vec{q} \in (0,\infty]^m$ .
  \item \label{it:product3} \textit{Orlicz spaces:} $L^{\Phi}(\Omega) = \prod_{j=1}^m L^{\Phi_j}(\Omega)$ for Young functions $\Phi_j$ and $\Phi^{-1}= \prod_{j=1}^m\Phi_j^{-1}$.
\end{enumerate}
In all these cases the (quasi)-norm of the product is equivalent to the usual (quasi)-norm.
\end{proposition}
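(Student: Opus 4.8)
The plan is to prove, for each of the four parts, the two pointwise‑product inclusions together with the comparison of the corresponding (quasi‑)norms. Since the pointwise product is associative and, in each of the families appearing in parts (ii)--(iv), the product of two spaces of the given type is again of that type with the correct indices, I would reduce to $m=2$ and induct on $m$, and I would invoke part (i) to discard any coordinate with $p_j=\infty$ (respectively $q_j=\infty$). In every two‑space instance, the inclusion $X_1\cdot X_2\hookrightarrow X$ amounts to a H\"older/Young‑type multiplication inequality, while the inclusion $X\hookrightarrow X_1\cdot X_2$ is obtained by writing a given $\xi\in X$ explicitly as $\xi=\xi_1\xi_2$ with $0\le\xi_j\in X_j$ and $\|\xi_1\|_{X_1}\|\xi_2\|_{X_2}\lesssim\|\xi\|_X$.

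Part (i) is immediate from the compatibility (ideal) property: if $\abs{\xi}\le\eta_1\eta_2$ with $\eta_1\in X$ and $0\le\eta_2\in L^\infty(\Omega)$, then $\abs{\xi}\le\|\eta_2\|_{L^\infty}\eta_1$, so $\xi\in X$ and $\|\xi\|_X\le\|\eta_1\|_X\|\eta_2\|_{L^\infty}$; and conversely $\abs{\xi}\le\abs{\xi}\cdot\mathbf 1$ with the constant function $\mathbf 1\in L^\infty(\Omega)$ of norm $1$, giving $\|\xi\|_{X\cdot L^\infty}\le\|\xi\|_X$. Thus $X\cdot L^\infty=X$ isometrically. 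For part (ii), after using (i) we may assume all $p_j<\infty$; then H\"older's inequality gives $\bigl\|\prod_j\xi_j\bigr\|_{L^p}\le\prod_j\|\xi_j\|_{L^{p_j}}$, hence $\|\xi\|_{L^p}\le\|\xi\|_{\prod_j L^{p_j}}$, while the factorization $\xi=\prod_{j=1}^m\abs{\xi}^{p/p_j}$ (legitimate since $\sum_j p/p_j=1$), for which $\bigl\|\,\abs{\xi}^{p/p_j}\bigr\|_{L^{p_j}}=\|\xi\|_{L^p}^{p/p_j}$, yields the reverse inclusion, in fact with equality of the norms.

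For part (iv) the key elementary fact is $\Phi\bigl(\prod_{j=1}^m a_j\bigr)\le\sum_{j=1}^m\Phi_j(a_j)$ for $a_j\ge 0$: since $a_j\le\Phi_j^{-1}\bigl(\Phi_j(a_j)\bigr)$ and the $\Phi_j^{-1}$ are nondecreasing, $\prod_j a_j\le\prod_j\Phi_j^{-1}\bigl(\max_k\Phi_k(a_k)\bigr)=\Phi^{-1}\bigl(\max_k\Phi_k(a_k)\bigr)$ by the hypothesis $\Phi^{-1}=\prod_j\Phi_j^{-1}$, whence $\Phi(\prod_j a_j)\le\max_k\Phi_k(a_k)\le\sum_k\Phi_k(a_k)$. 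Together with the convexity of $\Phi$ (so that $\Phi(t/m)\le\Phi(t)/m$) this gives $\prod_j L^{\Phi_j}\hookrightarrow L^\Phi$ with an $m$‑dependent constant. Conversely, for $\xi$ with $\int_\Omega\Phi(\abs{\xi})\dd\mu\le 1$, put $\xi_j:=\Phi_j^{-1}\bigl(\Phi(\abs{\xi})\bigr)$; then $\prod_j\xi_j=\Phi^{-1}\bigl(\Phi(\abs{\xi})\bigr)\ge\abs{\xi}$ and $\int_\Omega\Phi_j(\xi_j)\dd\mu\le\int_\Omega\Phi(\abs{\xi})\dd\mu\le 1$, so $\|\xi_j\|_{L^{\Phi_j}}\le 1$ and, by homogeneity, $\|\xi\|_{\prod_j L^{\Phi_j}}\le\|\xi\|_{L^\Phi}$. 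Taking $\Phi_j(t)=t^{p_j}$ re‑proves part (ii).

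Part (iii) is the delicate one, and is where I expect the main obstacle. The inclusion $L^{p_1,q_1}\cdot L^{p_2,q_2}\hookrightarrow L^{p,q}$ (then iterated over $m$) is exactly O'Neil's multiplication inequality for Lorentz spaces under the index relations $\tfrac1p=\tfrac1{p_1}+\tfrac1{p_2}$, $\tfrac1q=\tfrac1{q_1}+\tfrac1{q_2}$. For the reverse inclusion the obvious power factorization $\xi=\abs{\xi}^{p/p_1}\cdot\abs{\xi}^{p/p_2}$ does \emph{not} suffice: one computes $\bigl\|\,\abs{\xi}^{p/p_j}\bigr\|_{L^{p_j,q_j}}=\|\xi\|_{L^{p,(p/p_j)q_j}}^{p/p_j}$, and by log‑convexity of $s\mapsto\|\xi\|_{L^{p,s}}$ the geometric mean of these quasinorms is bounded \emph{below}, not above, by $\|\xi\|_{L^{p,q}}$, unless the ratios $q_j/p_j$ all coincide. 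The efficient remedy is to pass through the Calder\'on--Lozanovskii product: identify $X_1\cdot X_2$ with the Calder\'on--Lozanovskii product at exponent $\tfrac12$ of the $2$‑concavifications $X_1^2=L^{p_1/2,q_1/2}$ and $X_2^2=L^{p_2/2,q_2/2}$, and use the classical computation of Calder\'on products (equivalently, complex interpolation) of Lorentz spaces, $(L^{a_0,b_0})^{1/2}(L^{a_1,b_1})^{1/2}=L^{a,b}$ with $\tfrac1a=\tfrac12(\tfrac1{a_0}+\tfrac1{a_1})$ and $\tfrac1b=\tfrac12(\tfrac1{b_0}+\tfrac1{b_1})$; this yields $L^{p_1,q_1}\cdot L^{p_2,q_2}=L^{p,q}$ with equivalent quasinorms, and in fact also subsumes parts (ii) and (iv). The obstacle is precisely having this Calder\'on‑product description of Lorentz spaces available, which is the content of \cite{Ca64,Lo69,Sc10} (one may first concavify further to reach the Banach range); a genuinely self‑contained argument for the reverse Lorentz inclusion would instead have to replace the pure powers by a dyadic‑in‑value decomposition of the decreasing rearrangement of $\xi$, which is noticeably more technical.
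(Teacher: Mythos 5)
Your overall route coincides with the paper's: part (i) from the ideal property, parts (ii) and (iv) by generalized H\"older plus the explicit factorizations $\xi_j=\abs{\xi}^{p/p_j}$ and $\xi_j=\Phi_j^{-1}(\Phi(\abs{\xi}))$, and part (iii) by concavifying into the Banach range and identifying the pointwise product with a Calder\'on product/complex interpolation space of Lorentz spaces, exactly as the paper does by citing \cite{Ca64} together with \cite[Theorems~1.10.3 and 1.18.6]{Tr78}. Your observation that the pure power factorization fails for Lorentz spaces unless the ratios of the indices match is correct and is precisely why the paper treats (iii) differently from (ii).

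Two points need repair, though. First, the preliminary reduction ``invoke part (i) to discard any coordinate with $q_j=\infty$'' is wrong: in (iii) the exponents satisfy $p_j<\infty$, so $L^{p_j,\infty}(\Omega)$ is a weak Lebesgue space, not $L^\infty(\Omega)$, and (i) does not apply. Second, your parenthetical ``equivalently, complex interpolation'' hides the real issue: the Calder\'on product and the complex interpolation space coincide only under an extra hypothesis (Calder\'on requires, e.g., reflexivity or order continuity of one factor), and the interpolation formula for Lorentz spaces in the literature is stated for the complex method. After concavification this hypothesis is available precisely when some $q_k<\infty$ -- this is why the paper splits the argument: if all $q_j=\infty$ it uses the power factorization (which, as your own computation of $\|\,\abs{\xi}^{p/p_j}\|_{L^{p_j,q_j}}$ shows, does work in that case, since all second indices are infinite), and only when some $q_k<\infty$ does it pass through reflexivity, Calder\'on's identification, and Triebel's interpolation theorem. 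With that case distinction inserted, your argument is complete and matches the paper's.
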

\begin{proof}
The first claim follows directly from the definitions. For \ref{it:product1}, \ref{it:product2}, and \ref{it:product3}, the inclusion $\prod_{j=1}^mX_j\subseteq X$ with $X$ respectively  equal to $L^p(\Omega)$, $L^{p,q}(\Omega)$, and $L^{\Phi}(\Omega)$ and $X_j$ respectively equal to $L^{p_j}(\Omega)$, $L^{p_j,q_j}(\Omega)$, and $L^{\Phi_j}(\Omega)$, follows from the generalized H\"older's inequality $\|\prod_{j=1}^m\xi_j\|_X\lesssim\prod_{j=1}^m\|\xi_j\|_{X_j}$ valid for these spaces, see \cite{On63, On65}.

For the converse in \ref{it:product1} and \ref{it:product2} in the case that $q=q_1=\cdots=q_m=\infty$, let $\xi\in L^p(\Omega)$ or $\xi\in L^{p,\infty}(\Omega)$ respectively. If $p=p_1=\cdots=p_m=\infty$, the result follows from \ref{it:product0}. Otherwise, we set $\xi_j:=|\xi|^{\frac{p}{p_j}}$. Then $\xi_j\in L^{p_j}(\Omega)$ or $\xi_j\in L^{p_j,\infty}(\Omega)$ respectively, $|\xi|=\prod_{j=1}^m\xi_j$, and $\prod_{j=1}^m\|\xi_j\|_{L^{p_j}(\Omega)}=\|\xi\|_{L^p(\Omega)}$ or similarly in the weak case, proving the result. The converse for \ref{it:product3} is proven analogously with $\xi_j:=\Phi_j^{-1}(\Phi(|\xi|))$.

Finally, for \ref{it:product2} in the case $q_k<\infty$ for some $1 \leq k \leq j$ we take $s >0$ such that $X_j:= L^{{p_j/s},{q_j/s}}(\Omega)$ are all Banach spaces. Then $X_k$ is reflexive, which means that we can identify the product space $\prod_{j=1}^m L^{p_j/s,q_j/s}(\Omega)$ with an iterated complex interpolation space by \cite{Ca64}. So $\prod_{j=1}^m L^{p_j/s,q_j/s}(\Omega)= L^{p/s,q/s}(\Omega)$
by \cite[Theorem~1.10.3 and 1.18.6]{Tr78}. The assertion now follows by rescaling.
\end{proof}

\subsection{Sparse forms and multilinear weight classes}\label{subsec:sparse}
In this section we briefly outline some of the results on dyadic grids and sparse collections of cubes that we will use. For proofs of these result and other relevant properties we refer the reader to \cite{LN15}. Furthermore we will introduce multilinear weight classes and state some weighted results, for which we refer the reader to \cite{Ni19}. We note these results also hold in the more general setting of spaces homogeneous type, provided one uses the right notion of dyadic cubes in this setting, see \cite{HK12}.

By a \emph{cube} $Q\subseteq\R^d$ we mean a half-open cube whose sides are parallel to the coordinate axes. We define the standard dyadic grid as
\[
\ms{D}:=\bigcup_{k\in\Z}\big\{2^{-k}\big([0,1)^d+m\big):m\in\Z^d\big\}.
\]
An important property pertaining to cubes is the fact that there exist $3^d$ translates $(\ms{D}^\alpha)_{\alpha=1}^{3^d}$ of $\ms{D}$ such that for each cube $Q\subseteq\R^d$ there exists an $\alpha$ and a cube $Q'\in\ms{D}^\alpha$ such that $Q\subseteq Q'$ and $|Q'|\leq 6^d|Q|$. This so-called three lattice lemma will allow us to reduce our arguments to only having to consider dyadic grids.

A collection of cubes $\mc{S}$ is called \emph{sparse} if there is pairwise disjoint collection $(E_Q)_{Q\in\mc{S}}$ of measurable sets satisfying $E_Q\subseteq Q$ and $\abs{E_Q} \geq \frac{1}{2}|Q|$.
Note that the constant $\frac12$ in the estimate $\abs{E_Q} \geq \frac{1}{2}|Q|$ is not essential in the arguments and could be replaced by an $\eta \in (0,1)$. What is important is that this constant stays fixed throughout the arguments.

\begin{definition}
For $\vec{r}\in(0,\infty)^m$ and $\vec{f}\in L^{\vec{r}}_{\loc}(\R^d)$ we define the $m$-sublinear Hardy-Littlewood maximal operator
\[
M_{\vec{r}}(\vec{f}\hspace{2pt})(x):=\sup_Q\prod_{j=1}^m\langle f_j\rangle_{r_j,Q}\ind_Q(x), \qquad x \in \R^d
\]
where the supremum is taken over all cubes $Q\subseteq \R^d$.
Similarly, for a collection of cubes $\mc{D}$ we define
\[
M^{\mc{D}}_{\vec{r}}(\vec{f}\hspace{2pt})(x):=\sup_{Q\in\mc{D}}\prod_{j=1}^m\langle f_j\rangle_{r_j,Q}\ind_Q(x) , \qquad x \in \R^d.
\]
\end{definition}
A \emph{weight} $w$ is a measurable function $w:\R^d\to (0,\infty)$. For a weight $w$ and $p\in(0,\infty]$ we define the weighted Lebesgue space $L^p_w(\R^d)$ as the space of those measurable functions $f$ satisfying $\|fw\|_{L^p(\R^d)}<\infty$. Note that if $p\in(0,\infty)$, then $L^p_w(\R^d)$ is the $L^p$ space over $\R^d$ with respect to the measure $w^p\dd x$. It should be noted that our definition of $L^p_w(\R^d)$ is often denoted by $L^p(w^p)$ in the literature when $p<\infty$. The advantage of our definition is that we also obtain a sensible definition when $p=\infty$, which does play a role in the theory.

\begin{definition}
Let $\vec{r}\in(0,\infty)^m$, $s\in(0,\infty]$ and let $\vec{p}\in(0,\infty]^m$ with $\vec{r}\leq\vec{p}$ and $p\leq s$. Let $\vec{w}$ be a vector of $m$ weights. We say that $\vec{w}$ is a multilinear Muckenhoupt weight and write $\vec{w}\in A_{\vec{p},(\vec{r},s)}$ if
\[
[\vec{w}]_{\vec{p},(\vec{r},s)}:=\sup_Q\has{\prod_{j=1}^m\langle w_j^{-1}\rangle_{\frac{1}{\frac{1}{r_j}-\frac{1}{p_j}},Q}}\langle w\rangle_{\frac{1}{\frac{1}{p}-\frac{1}{s}},Q}<\infty,
\]
where the supremum is taken over all cubes $Q\subseteq \R^d$.
\end{definition}
These weight classes characterize the weak boundedness of the Hardy-Littlewood maximal operator in the sense that $[\vec{w}]_{\vec{p},(\vec{r},\infty)}\eqsim_d \|M_{\vec{r}}\|_{L^{\vec{p}}_{\vec{w}}(\R^d)\to L^{p,\infty}_w(\R^d)}$. In the case where we have the strict inequalities $\vec{r}<\vec{p}$ this improves to a strong bound. More precisely, we have the following result, which is shown in \cite[Proposition 2.14]{Ni19}.
\begin{proposition}\label{prop:multimaxwest}
Let $\vec{r}\in(0,\infty)^m$ and $\vec{p}\in(0,\infty]^m$ with $\vec{r}<\vec{p}$. Then  for all $\vec{w}\in A_{\vec{p},(\vec{r},\infty)}$
\[
\|M_{\vec{r}}\|_{L^{\vec{p}}_{\vec{w}}(\R^d)\to L^p_w(\R^d)}\lesssim_{\vec{p},\vec{r}}[\vec{w}]_{\vec{p},(\vec{r},\infty)}^{\max\cbraces{\frac{\frac{1}{\vec{r}}}{\frac{1}{\vec{r}}-\frac{1}{\vec{p}}}}}.
\]
\end{proposition}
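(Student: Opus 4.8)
The plan is to reduce to a dyadic maximal operator, dominate it by a sparse form, and estimate that form by bringing in the natural dual weights, the sharp exponent coming from a quantified reverse Hölder / $A_\infty$ bound for those duals; this is in essence the argument of \cite[Proposition~2.14]{Ni19}. First I would record the reductions: we may assume $f_j\geq 0$ and, after a routine truncation, that each $f_j$ is bounded with bounded support, so that $M_{\vec{r}}(\vec{f}\hspace{2pt})$ is finite everywhere. By the three lattice lemma there are $3^d$ dyadic grids $\ms{D}^{\alpha}$ with $M_{\vec{r}}(\vec{f}\hspace{2pt})\leq C_d\sum_{\alpha=1}^{3^d}M^{\ms{D}^{\alpha}}_{\vec{r}}(\vec{f}\hspace{2pt})$ pointwise, so it suffices to bound $M^{\ms{D}}_{\vec{r}}$ for a single dyadic grid $\ms{D}$, with a constant depending only on $d,\vec{p},\vec{r}$.

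The second step is to sparsely dominate $M^{\ms{D}}_{\vec{r}}$. For $k\in\Z$ let $\Omega_k:=\{M^{\ms{D}}_{\vec{r}}(\vec{f}\hspace{2pt})>2^k\}$, which is the union of the maximal cubes $Q\in\ms{D}$ with $\prod_{j=1}^m\langle f_j\rangle_{r_j,Q}>2^k$; let $\mc{S}$ be the collection of all these maximal cubes, over all $k\in\Z$. A standard Calderón--Zygmund stopping-time argument shows that $\mc{S}$ is sparse (cf.\ \cite{LN15}), and choosing for almost every $x$ the level $k$ with $2^k\leq M^{\ms{D}}_{\vec{r}}(\vec{f}\hspace{2pt})(x)<2^{k+1}$ together with a cube $Q\in\mc{S}$ from level $k-1$ containing $x$ yields
\[
M^{\ms{D}}_{\vec{r}}(\vec{f}\hspace{2pt})(x)\leq 4\sum_{Q\in\mc{S}}\Big(\prod_{j=1}^m\langle f_j\rangle_{r_j,Q}\Big)\ind_Q(x),
\]
so it remains to bound this sparse form in $L^p_w(\R^d)$.

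For the sparse form I would write $\frac{1}{q_j}:=\frac{1}{r_j}-\frac{1}{p_j}>0$ and introduce the dual weights $\sigma_j:=w_j^{-q_j}$, so that the $A_{\vec{p},(\vec{r},\infty)}$ condition reads $\big(\prod_{j=1}^m\langle\sigma_j\rangle_{1,Q}^{1/q_j}\big)\langle w\rangle_{p,Q}\leq[\vec{w}]_{\vec{p},(\vec{r},\infty)}$ for every cube $Q$. Hölder's inequality on each cube, with exponents $p_j/r_j$ and $q_j/r_j$, gives $\langle f_j\rangle_{r_j,Q}\leq\langle f_jw_j\rangle_{p_j,Q}\langle\sigma_j\rangle_{1,Q}^{1/q_j}$; expressing $\langle f_jw_j\rangle_{p_j,Q}$ through the $\sigma_j$-weighted average $\langle g_j\rangle^{\sigma_j}_{1,Q}=\big(\int_Q\sigma_j\,\dd x\big)^{-1}\int_Q g_j\sigma_j\,\dd x$ of $g_j:=(f_jw_j)^{p_j}\sigma_j^{-1}$ (for which $\|g_j\|_{L^1(\sigma_j\,\dd x)}=\|f_j\|_{L^{p_j}_{w_j}(\R^d)}^{p_j}$) and inserting the weight condition, one arrives at
\[
\prod_{j=1}^m\langle f_j\rangle_{r_j,Q}\leq[\vec{w}]_{\vec{p},(\vec{r},\infty)}\,\langle w\rangle_{p,Q}^{-1}\prod_{j=1}^m\langle\sigma_j\rangle_{1,Q}^{1/p_j}\big(\langle g_j\rangle^{\sigma_j}_{1,Q}\big)^{1/p_j}.
\]
Estimating $\|\cdot\|_{L^p_w}$ of the sparse form next — by duality when $p>1$, and using $\|\sum_n h_n\|_{L^p}^p\leq\sum_n\|h_n\|_{L^p}^p$ when $p\leq1$ — the factors $\langle w\rangle_{p,Q}^{-1}$ cancel against $\int_Q w^p$, and one is left with a multilinear sparse sum built from the measures $\sigma_j\,\dd x$ and the functions $g_j$, times a factor $[\vec{w}]_{\vec{p},(\vec{r},\infty)}$. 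That sum is closed by a multilinear Carleson embedding theorem, in which the Carleson constant of $\mc{S}$ relative to each $\sigma_j$ enters; by the John--Nirenberg inequality for sparse families this constant is controlled by the Fujii--Wilson $A_\infty$ constant of $\sigma_j$, and a reverse Hölder inequality bounds the latter by a suitable power of $[\vec{w}]_{\vec{p},(\vec{r},\infty)}$. Collecting the powers produces the exponent $\max_{1\leq j\leq m}\frac{1/r_j}{1/r_j-1/p_j}$.

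The first two steps are routine; the main obstacle is the last one, and specifically obtaining the \emph{sharp} power. A soft version of the final step — Hölder together with $L^{p_j}$-boundedness of an auxiliary weighted maximal operator — either loses or fails outright, since maximal operators are unbounded on $L^1$; one is therefore forced to quantify the reverse Hölder / $A_\infty$ behaviour of the dual weights $\sigma_j$ and to track all the resulting exponents through the Carleson embedding so that precisely $\max_{1\leq j\leq m}\frac{1/r_j}{1/r_j-1/p_j}$ survives. That bookkeeping is the technical heart, carried out in \cite[Proposition~2.14]{Ni19}.
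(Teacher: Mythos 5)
The paper itself offers no proof of this proposition: it is quoted directly from \cite[Proposition~2.14]{Ni19}. So ending your argument by deferring to that reference does match the paper's treatment, and your first two steps (reduction to dyadic grids via the three lattice lemma, and sparse domination of $M^{\ms{D}}_{\vec{r}}$ by a Calder\'on--Zygmund stopping time, as in \cite{LN15}) are standard and fine. The issue is that you present the third step as the route by which the exponent $\max_j\frac{1/r_j}{1/r_j-1/p_j}$ emerges, and as outlined that step would not deliver it.

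Concretely: with your splitting $\langle f_j\rangle_{r_j,Q}\leq \langle f_jw_j\rangle_{p_j,Q}\langle\sigma_j\rangle_{1,Q}^{1/q_j}$, the dependence on $f_j$ on each cube is through $\bigl(\int_Q(f_jw_j)^{p_j}\dd x\bigr)^{1/p_j}$, so after H\"older over the sparse family with exponents $p_j/p$ you are left with the sums $\sum_{Q\in\mc{S}}\langle g_j\rangle^{\sigma_j}_{1,Q}\sigma_j(Q)=\sum_{Q\in\mc{S}}\int_Q g_j\sigma_j\dd x$, in which the $\sigma_j$-average occurs to the first power; there is no $L^1$-Carleson embedding, and these sums diverge already for a nested sparse family, so the scheme breaks as written. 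The standard repair keeps $f_j$ at exponent $r_j$ inside the average, passes to $\sigma_j$-weighted averages, and pays the $\sigma_j$-Carleson constant of a Lebesgue-sparse family, which is comparable to $[\sigma_j]_{A_\infty}$; this is exactly the mixed-type bound $\|M_{\vec{r}}\|\lesssim[\vec{w}]_{\vec{p},(\vec{r},\infty)}\prod_{j=1}^m[\sigma_j]_{A_\infty}^{1/p_j}$, and since the $A_\infty$ constants of the duals enter multiplicatively, one per coordinate, converting them into powers of $[\vec{w}]_{\vec{p},(\vec{r},\infty)}$ by reverse H\"older collects a \emph{sum} over $j$: for $\vec{r}=(1,\dots,1)$ one gets the exponent $1+\sum_{j}\frac{1}{p_j-1}$ rather than $\max_j p_j'=1+\max_j\frac{1}{p_j-1}$, and these differ as soon as two of the $p_j$ are finite (and the discrepancy is real, e.g. for $w_1=\dots=w_m$ with all $[\sigma_j]_{A_\infty}$ simultaneously large). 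So no bookkeeping along the lines you describe produces the max-type power; that sharp power is precisely the nontrivial content of \cite[Proposition~2.14]{Ni19} (the rescaled form of the sharp bound of \cite{LMS14}), proved there by a different argument. Either cite that result outright, as the paper does, or reproduce its actual proof; the intermediate sketch contains a step that fails and asserts a sharpness it cannot justify.
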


We wish to elaborate on the intimate connection between the multisublinear Hardy-Littlewood maximal operator and sparse forms. Indeed, for $\vec{r}\in(0,\infty)^m$ we have the equivalence
\begin{equation}\label{eq:equivalencespmax}
\|M_{\vec{r}}(\vec{f}\hspace{2pt})\|_{L^1(\R^d)}\eqsim \sup_{\mc{S}}\sum_{Q\in\mc{S}}\Big(\prod_{j=1}^m\langle f_j\rangle_{r_j,Q}\Big)|Q|,
\end{equation}
which is valid for all $\vec{f}\in L^{\vec{r}}_{\loc}(\R^d)$, where the supremum is taken over all sparse collections $\mc{S}$, see \cite[Remark 1.5]{CDO17}. Applying \eqref{eq:equivalencespmax} with $m=2$, $r_1=r_2=1$ allows us rewrite the sparse domination results in Theorem~\ref{theorem:mainlinear} in terms of multisublinear maximal functions, which is an essential step in its proof.

For the sparse domination in our main result (Theorem~\ref{thm:mainthm1}), our sparse domination is in fact written in terms of a multisublinear maximal function. We emphasize how one can view this in terms of sparse forms, being a particular case of \eqref{eq:equivalencespmax}, in the following proposition:
\begin{proposition}\label{prop:sparsequiv}
Let $\vec{r}\in(0,\infty)^m$, $q\in(0,\infty)$, and $s\in(q,\infty]$. Then for all $\vec{f}\in L^{\vec{r}}_{\loc}(\R^d)$ and $g\in L^{\frac{1}{\frac{1}{q}-\frac{1}{s}}}_{\loc}(\R^d)$
\[
\|M_{(\vec{r},\frac{1}{\frac{1}{q}-\frac{1}{s}})}(\vec{f},g)\|_{L^q(\R^d)}\eqsim_{q,r,s} \sup_{\mc{S}}\has{\sum_{Q\in\mc{S}}\Big(\prod_{j=1}^m\langle f_j\rangle_{r_j,Q}\Big)^q\langle g\rangle^q_{\frac{1}{\frac{1}{q}-\frac{1}{s}},Q}|Q|}^{\frac{1}{q}},
\]
where the supremum is over all sparse collections $\mc{S}$.
\end{proposition}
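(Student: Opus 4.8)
The plan is to reduce the asserted equivalence to the already available identity \eqref{eq:equivalencespmax} --- which is exactly the case $q=1$ --- by a concavification (rescaling) argument. Put $\sigma:=\frac{1}{\frac{1}{q}-\frac{1}{s}}$, so that $\sigma\in[q,\infty)$ (with $\sigma=q$ precisely when $s=\infty$), $g\in L^{\sigma}_{\loc}(\R^d)$ by hypothesis, and $\sigma/q=\frac{1}{1-q/s}\in[1,\infty)$; in particular $|f_1|^{q},\dots,|f_m|^{q},|g|^{q}$ lie in $L^{r_1/q}_{\loc}(\R^d),\dots,L^{r_m/q}_{\loc}(\R^d),L^{\sigma/q}_{\loc}(\R^d)$ respectively. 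The elementary observation driving the whole argument is that raising an average to the power $q$ produces an average of the $q$-th power at a rescaled exponent: for every cube $Q\subseteq\R^d$,
\[
\langle f_j\rangle_{r_j,Q}^{q}=\langle|f_j|^{q}\rangle_{r_j/q,Q}\quad(1\le j\le m),\qquad \langle g\rangle_{\sigma,Q}^{q}=\langle|g|^{q}\rangle_{\sigma/q,Q}.
\]

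First I would push the exponent $q$ inside the supremum defining the maximal operator. Since $t\mapsto t^{q}$ is increasing on $[0,\infty)$ it commutes with both the supremum over cubes and the multiplication by $\ind_Q$, so together with the two identities above this yields the pointwise equality
\[
M_{(\vec{r},\sigma)}(\vec{f},g)^{q}=M_{(\vec{r}/q,\,\sigma/q)}\big(|f_1|^{q},\dots,|f_m|^{q},|g|^{q}\big)\qquad\text{on }\R^d.
\]
Integrating over $\R^d$ converts this into the clean identity
\[
\big\|M_{(\vec{r},\sigma)}(\vec{f},g)\big\|_{L^{q}(\R^d)}^{q}=\big\|M_{(\vec{r}/q,\,\sigma/q)}\big(|f_1|^{q},\dots,|f_m|^{q},|g|^{q}\big)\big\|_{L^{1}(\R^d)}.
\]

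Next I would apply \eqref{eq:equivalencespmax} to the $(m+1)$-tuple of exponents $(\vec{r}/q,\sigma/q)\in(0,\infty)^{m+1}$ and the $(m+1)$-tuple of functions $(|f_1|^{q},\dots,|f_m|^{q},|g|^{q})$, which identifies the right-hand side above, up to a multiplicative constant depending only on $d$ and $m$, with $\sup_{\mc{S}}\sum_{Q\in\mc{S}}\big(\prod_{j=1}^{m}\langle|f_j|^{q}\rangle_{r_j/q,Q}\big)\langle|g|^{q}\rangle_{\sigma/q,Q}\,|Q|$. Undoing the concavification identities once more rewrites this supremum as $\sup_{\mc{S}}\sum_{Q\in\mc{S}}\big(\prod_{j=1}^{m}\langle f_j\rangle_{r_j,Q}\big)^{q}\langle g\rangle_{\sigma,Q}^{q}\,|Q|$, and taking $q$-th roots gives exactly the claimed two-sided bound, with implied constant that of \eqref{eq:equivalencespmax} raised to the power $1/q$. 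I do not expect a genuine obstacle here: all the analytic content is carried by \eqref{eq:equivalencespmax}, and the only points needing (routine) care are the bookkeeping above --- checking that the rescaled functions are locally integrable to the indicated powers, that $(\vec{r}/q,\sigma/q)$ still lies in $(0,\infty)^{m+1}$ so that \eqref{eq:equivalencespmax} is applicable (it is, since $\sigma/q<\infty$ for every $s\in(q,\infty]$), and that the endpoint case $s=\infty$ (where $\sigma=q$ and $\sigma/q=1$) is included.
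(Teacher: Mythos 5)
Your proposal is correct and is essentially the paper's own proof: both rescale by the exponent $q$ via the identities $\langle f_j\rangle_{r_j,Q}^{q}=\langle|f_j|^{q}\rangle_{r_j/q,Q}$ and $\langle g\rangle_{\sigma,Q}^{q}=\langle|g|^{q}\rangle_{\sigma/q,Q}$ to rewrite both sides in terms of the $(m+1)$-tuple $(|f_1|^{q},\dots,|f_m|^{q},|g|^{q})$ with exponents $(\vec{r}/q,\sigma/q)$, and then invoke \eqref{eq:equivalencespmax}. The only difference is that you spell out the routine bookkeeping (range of $\sigma$, local integrability, the endpoint $s=\infty$) which the paper leaves implicit.
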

\begin{proof}
Note that the left-hand side can be written as
\[
\nrmb{M_{(\frac{\vec{r}}{q},\frac{\frac{1}{q}}{\frac{1}{q}-\frac{1}{s}})}(|f_1|^q,\ldots,|f_m|^q,|g|^q)}^{\frac{1}{q}}_{L^1(\R^d)}
\]
while the right-hand side can be written as
\[
\has{\sup_{\mc{S}}\sum_{Q\in\mc{S}}\Big(\prod_{j=1}^m\langle |f_j|^q\rangle_{\frac{r_j}{q},Q}\Big)\langle |g|^q\rangle_{\frac{\frac{1}{q}}{\frac{1}{q}-\frac{1}{s}},Q}|Q|}^{\frac{1}{q}}
\]
Thus, the result follows as a special case of \eqref{eq:equivalencespmax}.
\end{proof}
We point out that pointwise domination of an operator by a sparse operator of course implies that this operator also satisfies sparse domination in form. Since we will use this fact several times, we record it here.
\begin{proposition}\label{prop:ptwisetoform}
Let $\vec{r}\in(0,\infty)^m$, $q\in(0,\infty)$ and let $T$ be an operator defined on $m$-tuples of functions such that for $\vec{f} \in L^{\vec{r}}_{\loc}(\R^d)$ there exists a sparse collection $\mc{S}$ such that
\[
\absb{T(\vec{f}\hspace{2pt})(x)}\leq C_T\has{\sum_{Q\in\mc{S}}\Big(\prod_{j=1}^m\langle f_j\rangle_{r_j,Q}\Big)^q\ind_Q(x)}^{\frac{1}{q}},\qquad x \in \R^d.
\]
Then
\[
\|T(\vec{f}\hspace{2pt})\cdot g\|_{L^q(\R^d)}\lesssim_q C_T\,\|M_{(\vec{r},q)}(\vec{f},g)\|_{L^q(\R^d)}
\]
for all $g\in L^q_{\loc}(\R^d)$.
\end{proposition}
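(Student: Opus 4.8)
The plan is to raise the claimed inequality to the $q$-th power, turn the hypothesised pointwise domination into a sparse form by Tonelli's theorem, and then appeal to Proposition~\ref{prop:sparsequiv} with $s=\infty$.

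First I would fix $\vec{f}\in L^{\vec{r}}_{\loc}(\R^d)$ and $g\in L^q_{\loc}(\R^d)$, and let $\mc{S}$ be the sparse collection provided by the hypothesis for this $\vec{f}$. All quantities in sight being nonnegative, I raise the pointwise bound to the power $q$, multiply by $|g|^q$, and integrate; Tonelli's theorem then permits interchanging the sum over $Q\in\mc{S}$ with the integral, giving
\begin{equation*}
  \int_{\R^d}\absb{T(\vec{f}\hspace{2pt})}^q\,\abs{g}^q \dd x \leq C_T^q\sum_{Q\in\mc{S}}\Big(\prod_{j=1}^m\langle f_j\rangle_{r_j,Q}\Big)^q\int_Q\abs{g}^q\dd x = C_T^q\sum_{Q\in\mc{S}}\Big(\prod_{j=1}^m\langle f_j\rangle_{r_j,Q}\Big)^q\langle g\rangle_{q,Q}^q\,\abs{Q}.
\end{equation*}
Here each $\langle f_j\rangle_{r_j,Q}$ is finite because $f_j\in L^{r_j}_{\loc}(\R^d)$ and each $\langle g\rangle_{q,Q}$ is finite because $g\in L^q_{\loc}(\R^d)$; in any event the interchange is valid regardless of finiteness.

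Next I would bound the right-hand side by the supremum of the same expression over all sparse collections. By Proposition~\ref{prop:sparsequiv} applied with this $q$ and $s=\infty$, so that $\frac{1}{\frac{1}{q}-\frac{1}{s}}=q$, this supremum is comparable, with a constant depending only on $q$ and $\vec{r}$, to $\|M_{(\vec{r},q)}(\vec{f},g)\|_{L^q(\R^d)}^q$. Taking $q$-th roots yields the assertion.

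I do not expect a genuine obstacle: the argument is simply an unwinding of definitions combined with the already-established equivalence of Proposition~\ref{prop:sparsequiv}. The only mild point requiring care is that the particular sparse collection $\mc{S}$ coming from the pointwise domination is of course controlled by the supremum over all sparse collections featuring in Proposition~\ref{prop:sparsequiv}, so that no further optimisation over $\mc{S}$ is needed.
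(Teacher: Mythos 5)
Your argument is correct and is essentially identical to the paper's proof: both multiply the pointwise sparse bound by $g$, take the $L^q$-norm, interchange sum and integral to arrive at the sparse form $\sum_{Q\in\mc{S}}\big(\prod_{j=1}^m\langle f_j\rangle_{r_j,Q}\big)^q\langle g\rangle_{q,Q}^q|Q|$, and then invoke Proposition~\ref{prop:sparsequiv} with $s=\infty$. No issues.
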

\begin{proof}
By Proposition~\ref{prop:sparsequiv} we have
\begin{align*}
\|T(\vec{f}\hspace{2pt})g\|_{L^q(\R^d)}&\leq C_T\Big\|\has{\sum_{Q\in\mc{S}}\Big(\prod_{j=1}^m\langle f_j\rangle_{r_j,Q}\Big)^q\ind_Q}^{\frac{1}{q}}g\Big\|_{L^q(\R^d)}\\
&=C_T\has{\int_{\R^d}\sum_{Q\in\mc{S}}\Big(\prod_{j=1}^m\langle f_j\rangle_{r_j,Q}\Big)^q\ind_Q|g|^q\,\mathrm{d}x}^{\frac{1}{q}}\\&=C_T\has{\sum_{Q\in\mc{S}}\Big(\prod_{j=1}^m\langle f_j\rangle_{r_j,Q}\Big)^q\langle g\rangle_{q,Q}^q|Q|}^{\frac{1}{q}}\\
&\lesssim_q C_T\|M_{(\vec{r},q)}(\vec{f},g)\|_{L^q(\R^d)},
\end{align*}
for all $g\in L^q_{\loc}(\R^d)$, as desired.
\end{proof}

Next we note that vector-valued sparse domination can be written in two equivalent ways. The first way uses duality in $X$, which is useful as it allows one to apply Fubini's theorem. The second is domination with the norm of $X$ on the inside, which is the form that is usually used in the literature.
\begin{proposition}\label{prop:vvsdequivalence}
Let $\vec{r}\in(0,\infty)^m$, $q\in(0,\infty)$, $s\in(q,\infty]$ and let $\vec{X}$ be an $m$-tuple of quasi-Banach function spaces over a measure space $(\Omega,\mu)$ such that $X$ is $q$-convex and order-continuous. Let $\widetilde{T}$ be an operator defined on an $m$-tuple $\vec{f}\in L^{\vec{r}}_{\loc}(\R^d;\vec{X})$ with $\widetilde{T}(\vec{f})\in L^0(\R^d;X)$. Then the following are equivalent:
\begin{enumerate}[(i)]
\item\label{it:vvsdeq1} For all $g\in L^\infty_c(\R^d;((X^q)^\ast)^{\frac{1}{q}})$
\[
\nrmb{\widetilde{T}(\vec{f}\hspace{2pt})\cdot g}_{L^q(\R^d;L^q(\Omega))}\leq C\,\nrmb{M_{(\vec{r},\frac{1}{\frac{1}{q}-\frac{1}{s}})}(\|\vec{f}\|_{\vec{X}}, \|g\|_{((X^q)^\ast)^{\frac{1}{q}}})}_{L^q(\R^d)}.
\]
\item\label{it:vvsdeq2} For all $g\in L^\infty_c(\R^d)$
\[
\nrmb{\|\widetilde{T}(\vec{f}\hspace{2pt})\|_X\cdot g}_{L^q(\R^d)}\leq C\,\nrmb{M_{(\vec{r},\frac{1}{\frac{1}{q}-\frac{1}{s}})}(\|\vec{f}\|_{\vec{X}}, g)}_{L^q(\R^d)}.
\]
\end{enumerate}
\end{proposition}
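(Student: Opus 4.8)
The plan is to reduce both statements to the pointwise-in-$x$ duality
\[
\nrm{\xi}_X=\sup\big\{\nrm{\xi h}_{L^q(\Omega)}:h\in((X^q)^\ast)^{\frac1q},\ \nrm{h}_{((X^q)^\ast)^{\frac1q}}\leq1\big\},\qquad \xi\in X,
\]
and then transfer this identity to the level of $\R^d$: by a normalisation for \ref{it:vvsdeq2}$\Rightarrow$\ref{it:vvsdeq1}, and by a measurable almost-optimal selection for \ref{it:vvsdeq1}$\Rightarrow$\ref{it:vvsdeq2}.

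First I would establish the displayed duality. Since $X$ is $q$-convex, $X^q$ is a Banach function space, and it inherits the Fatou property from $X$ (apply it to $\phi_n^{1/q}$) as well as order-continuity: if $0\leq\phi_n\uparrow\phi\in X^q$ then $0\leq\phi^{1/q}-(\phi-\phi_n)^{1/q}\uparrow\phi^{1/q}$ in $X$, so order-continuity of $X$ gives $\nrm{\phi-\phi_n}_{X^q}=\nrm{(\phi-\phi_n)^{1/q}}_X^q\to0$. Hence $(X^q)^\ast$ coincides with the K\"othe dual of $X^q$ with the integral pairing, and the Lorentz--Luxemburg theorem gives $\nrm{\phi}_{X^q}=\sup\{\int_\Omega\phi\psi\dd\mu:0\leq\psi\in(X^q)^\ast,\ \nrm{\psi}_{(X^q)^\ast}\leq1\}$. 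Substituting $\phi=\abs{\xi}^q$ and $\psi=\abs{h}^q$ and using $\int_\Omega\abs{\xi h}^q\dd\mu=\int_\Omega\phi\psi\dd\mu$ together with $\nrm{h}_{((X^q)^\ast)^{1/q}}=\nrm{\abs{h}^q}_{(X^q)^\ast}^{1/q}$ turns this into the claimed duality. In particular H\"older's inequality already gives $\nrm{\xi h}_{L^q(\Omega)}\leq\nrm{\xi}_X$ whenever $\nrm{h}_{((X^q)^\ast)^{1/q}}\leq1$, which is the only inequality needed for \ref{it:vvsdeq2}$\Rightarrow$\ref{it:vvsdeq1}.

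For \ref{it:vvsdeq2}$\Rightarrow$\ref{it:vvsdeq1}, given $g\in L^\infty_c(\R^d;((X^q)^\ast)^{1/q})$ write $g=\nrm{g}_{((X^q)^\ast)^{1/q}}\,h$ with $h(x):=g(x)/\nrm{g(x)}_{((X^q)^\ast)^{1/q}}$ where this is nonzero and $h(x):=0$ otherwise; then $h$ is strongly measurable and $\nrm{h(x)}_{((X^q)^\ast)^{1/q}}\leq1$. The H\"older bound above yields, pointwise in $x$,
\[
\nrmb{\widetilde{T}(\vec{f})(x)\cdot g(x)}_{L^q(\Omega)}=\nrm{g(x)}_{((X^q)^\ast)^{1/q}}\nrmb{\widetilde{T}(\vec{f})(x)\cdot h(x)}_{L^q(\Omega)}\leq\nrm{g(x)}_{((X^q)^\ast)^{1/q}}\nrmb{\widetilde{T}(\vec{f})(x)}_X,
\]
and since $x\mapsto\nrm{g(x)}_{((X^q)^\ast)^{1/q}}$ lies in $L^\infty_c(\R^d)$, applying \ref{it:vvsdeq2} to this scalar function gives \ref{it:vvsdeq1} with the same constant.

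For \ref{it:vvsdeq1}$\Rightarrow$\ref{it:vvsdeq2} the extra ingredient — and the only real obstacle — is a measurable, almost-optimal choice of $h$. Since $\widetilde{T}(\vec{f})\in L^0(\R^d;X)$ is strongly measurable, its essential range lies in a separable subspace $X_0\subseteq X$, and a standard argument produces a countable family $(h_n)_n$ in the unit ball of $((X^q)^\ast)^{1/q}$ with $\nrm{\xi}_X=\sup_n\nrm{\xi h_n}_{L^q(\Omega)}$ for all $\xi\in X_0$. Fixing $\varepsilon\in(0,1)$ and letting $n(x)$ be the least $n$ with $\nrm{\widetilde{T}(\vec{f})(x)h_n}_{L^q(\Omega)}\geq(1-\varepsilon)\nrm{\widetilde{T}(\vec{f})(x)}_X$ defines a measurable $\N$-valued function, so $h:=h_{n(\cdot)}$ is countably-valued, hence strongly measurable, with $\nrm{h(x)}_{((X^q)^\ast)^{1/q}}\leq1$. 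Given $g\in L^\infty_c(\R^d)$ put $G:=gh\in L^\infty_c(\R^d;((X^q)^\ast)^{1/q})$, so that $\nrm{G(x)}_{((X^q)^\ast)^{1/q}}\leq\abs{g(x)}$. Applying \ref{it:vvsdeq1} to $G$, bounding its left-hand side from below by $(1-\varepsilon)\nrm{\nrm{\widetilde{T}(\vec{f})}_X\cdot g}_{L^q(\R^d)}$ and its right-hand side from above via the monotonicity of $M_{(\vec{r},\frac{1}{1/q-1/s})}$ in its last entry (which gives $M_{(\vec{r},\frac{1}{1/q-1/s})}(\nrm{\vec{f}}_{\vec{X}},\nrm{G}_{((X^q)^\ast)^{1/q}})\leq M_{(\vec{r},\frac{1}{1/q-1/s})}(\nrm{\vec{f}}_{\vec{X}},g)$), yields \ref{it:vvsdeq2} with constant $(1-\varepsilon)^{-1}C$; letting $\varepsilon\downarrow0$ completes the proof.
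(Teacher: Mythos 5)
Your proof is correct and takes essentially the same route as the paper: the implication \ref{it:vvsdeq2}$\Rightarrow$\ref{it:vvsdeq1} is the pointwise H\"older estimate $\nrm{\xi h}_{L^q(\Omega)}\leq\nrm{\xi}_X\nrm{h}_{((X^q)^\ast)^{1/q}}$ plus normalization, and \ref{it:vvsdeq1}$\Rightarrow$\ref{it:vvsdeq2} is dualization against functions $gh$ with $h$ in the unit ball of $((X^q)^\ast)^{\frac1q}$, using $\nrm{gh}_{((X^q)^\ast)^{1/q}}\leq\abs{g}$ and the monotonicity of $M_{(\vec{r},\frac{1}{\frac1q-\frac1s})}$ in its last slot. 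The only difference is how the norming step is justified: the paper invokes the Bochner-space duality for $L^1(\R^d;X^q)$ (\cite[Proposition~1.3.1]{HNVW16}), while you establish it directly via the Lorentz--Luxemburg/K\"othe-dual identification and a countably-valued measurable near-optimal selection; both are valid.
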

\begin{proof}
For \ref{it:vvsdeq2}$\Rightarrow$\ref{it:vvsdeq1}, note that
\begin{align*}
\|\widetilde{T}(\vec{f}\hspace{2pt})\cdot g\|_{L^q(\Omega)}&=\left(\int_\Omega\!|\widetilde{T}(\vec{f}\hspace{2pt})|^q|g|^q\,\mathrm{d}\mu\right)^{\frac{1}{q}} \leq\nrmb{|\widetilde{T}(\vec{f}\hspace{2pt})|^q}^{\frac{1}{q}}_{X^q}\nrmb{|g|^q}_{(X^q)^\ast}^{\frac{1}{q}}\\
&=\nrmb{\widetilde{T}(\vec{f}\hspace{2pt})}_X\nrm{g}_{((X^q)^\ast)^{\frac{1}{q}}}
\end{align*}
so that $\nrmb{\widetilde{T}(\vec{f}\hspace{2pt})\cdot g}_{L^q(\R^d;L^q(\Omega))}\leq\nrmb{\|\widetilde{T}(\vec{f}\hspace{2pt})\|_X\cdot \|g\|_{((X^q)^\ast)^{\frac{1}{q}}}}_{L^q(\R^d)}$. Since for $g\in L^\infty_c(\R^d;((X^q)^\ast)^{\frac{1}{q}})$ we have $\|g\|_{((X^q)^\ast)^{\frac{1}{q}}}\in L^\infty_c(\R^d)$, applying \ref{it:vvsdeq2} with $g$ replaced by $\|g\|_{((X^q)^\ast)^{\frac{1}{q}}}$ proves \ref{it:vvsdeq1}.

For \ref{it:vvsdeq1}$\Rightarrow$\ref{it:vvsdeq2} we note that by duality (see e.g. \cite[Proposition~1.3.1]{HNVW16}) we have
\begin{equation}\label{eq:vvsdequiv1}
\begin{split}
\nrmb{\|\widetilde{T}(\vec{f}\hspace{2pt})\|_X\cdot g}_{L^q(\R^d)}&=\nrmb{\||\widetilde{T}(\vec{f}\hspace{2pt})|^q\|_{X^q}\cdot |g|^q}^{\frac{1}{q}}_{L^1(\R^d)}\\&=\nrmb{|\widetilde{T}(\vec{f}\hspace{2pt})|^q|g|^q|}_{L^1(\R^d;X^q)}^{\frac{1}{q}}\\
&=\sup_{\|h\|_{L^\infty(\R^d;((X^q)^\ast)^{1/q})}=1}\nrmb{|\widetilde{T}(\vec{f}\hspace{2pt})|^q\cdot|g|^q \cdot |h|^q}_{L^1(\R^d;L^1(\Omega))}^{\frac1q}\\
&=\sup_{\|h\|_{L^\infty(\R^d;((X^q)^\ast)^{1/q})}=1}\nrmb{\widetilde{T}(\vec{f}\hspace{2pt})\cdot gh}_{L^q(\R^d;L^q(\Omega))}.
\end{split}
\end{equation}
Since $gh\in L^\infty_c(\R^d;((X^q)^\ast)^{\frac{1}{q}})$ for any $g\in L^\infty_c(\R^d)$ and $h\in L^\infty(\R^d;((X^q)^\ast)^{\frac{1}{q}})$ of norm 1 with $\|gh\|_{((X^q)^\ast)^{\frac{1}{q}}}\leq|g|\|h\|_{L^\infty(\R^d;((X^q)^\ast)^{\frac{1}{q}})}=|g|$, it follows from \ref{it:vvsdeq1} that
\begin{align*}
\nrmb{\widetilde{T}(\vec{f}\hspace{2pt})\cdot gh}_{L^q(\R^d;L^q(\Omega))}&\leq C\,\nrmb{M_{(\vec{r},\frac{1}{\frac{1}{q}-\frac{1}{s}})}(\|\vec{f}\|_{\vec{X}}, \|gh\|_{((X^q)^\ast)^{\frac{1}{q}}})}_{L^q(\R^d)}\\
&\leq C\,\nrmb{M_{(\vec{r},\frac{1}{\frac{1}{q}-\frac{1}{s}})}(\|\vec{f}\|_{\vec{X}}, g)}_{L^q(\R^d)}.
\end{align*}
By combining this result with \eqref{eq:vvsdequiv1} we have proven \ref{it:vvsdeq2}.
\end{proof}

In the following result we will deduce weighted bounds from domination by the multisublinear Hardy--Littlewood operator. To this end we introduce some terminology.
\begin{definition}
Let $\vec{X}$ be an $m$-tuple of quasi-Banach function spaces over a measure space $(\Omega,\mu)$. Let $\vec{Y}$, $V$ be $m+1$ quasi-normed linear subspaces of $L^0(\R^n;\vec{X})$ and $L^0(\R^n;X)$ respectively and let $T:\vec{Y}\to V$. We say that $T$ is \textit{$m$-linear} if it is linear in each of its components. We say that $T$ is \textit{$m$-sublinear} if it is positive-valued and subadditive in each of its components, i.e., the function $T(\vec{f}\hspace{2pt})$ takes values in the positive functions, and for all $j\in\{1,\ldots,m\}$
\[
T(f_1,\ldots,f_{j-1},f_j+g_j,f_{j+1},\ldots,f_m)\leq T(\vec{f})+T(f_1,\ldots,f_{j-1},g_j,f_{j+1},\ldots,f_m)
\]
for all $\vec{f}\in\vec{Y}$, $g_j\in Y_j$.
\end{definition}
We will generally consider operators that are either $m$-linear or $m$-sublinear, which we shorten by saying that the operator is $m$-(sub)linear. We point out that the modulus of any $m$-linear operator is $m$-sublinear.
\begin{proposition}\label{prop:wfromsdscalar}
Let $\vec{r}\in(0,\infty)^m$, $q\in(0,\infty)$, $s\in(q,\infty]$ and let $\vec{X}$ be an $m$-tuple of quasi-Banach function spaces over a measure space $(\Omega,\mu)$ such that $X$ is $q$-convex and order-continuous. Let $\widetilde{T}$ be an $m$-(sub)linear operator initially defined for all simple functions $\vec{f}\in L^\infty_c(\R^d;\vec{X})$. Suppose that
\begin{equation}\label{eq:scalarwpropin}
\nrmb{\|\widetilde{T}(\vec{f}\hspace{2pt})\|_X\cdot g}_{L^q(\R^d)}\leq C_T\,\nrmb{M_{(\vec{r},\frac{1}{\frac{1}{q}-\frac{1}{s}})}(\|\vec{f}\|_{\vec{X}}, g)}_{L^q(\R^d)}.
\end{equation}
for all simple $\vec{f}\in L^\infty_c(\R^d;\vec{X})$, $g\in L^\infty_c(\R^d)$. Then for all $\vec{p}\in (0,\infty]^m$ with $\vec{r}<\vec{p}$ and $p<s$, all $\vec{w}\in A_{\vec{p},(\vec{r},s)}$, $\widetilde{T}$ has a unique extension satisfying
\[
\|\widetilde{T}(\vec{f}\hspace{2pt})\|_{L_w^p(\R^d;X)}\lesssim_{\vec{p},q,\vec{r},s} C_T\, [\vec{w}]_{\vec{p},(\vec{r},s)}^{ \max\left\{\frac{\frac{1}{\vec{r}}}{\frac{1}{\vec{r}}-\frac{1}{\vec{p}}},\frac{\frac{1}{q}-\frac{1}{s}}{\frac{1}{p}-\frac{1}{s}}\right\}}\prod_{j=1}^m\|f_j\|_{L^{p_j}_{w_j}(\R^d;X_j)}
\]
for all $\vec{f}\in L^{\vec{p}}_{\vec{w}}(\R^d;\vec{X})$.
\end{proposition}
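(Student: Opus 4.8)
The plan is to reduce the weighted vector-valued bound to the scalar weighted bound for the multisublinear Hardy--Littlewood maximal operator $M_{\vec r}$ from Proposition~\ref{prop:multimaxwest}, using hypothesis~\eqref{eq:scalarwpropin} together with a duality/Fubini argument in the space $X$. First I would fix $\vec p$ with $\vec r<\vec p$ and $p<s$, fix $\vec w\in A_{\vec p,(\vec r,s)}$, and take simple $\vec f\in L^\infty_c(\R^d;\vec X)$, so that \eqref{eq:scalarwpropin} applies. Writing $\frac1t:=\frac1p-\frac1s>0$, duality for the weighted $L^p$-norm of a vector in $X$ gives
\[
\nrmb{\widetilde T(\vec f\hspace{2pt})}_{L^p_w(\R^d;X)}
=\sup\cbraces{\nrmb{\,\|\widetilde T(\vec f\hspace{2pt})\|_X\cdot g\,}_{L^q(\R^d)}: \nrm{g\, w^{-1}}_{L^{t'}(\R^d)}\leq 1},
\]
where $g$ ranges over (a dense subclass of) $L^\infty_c(\R^d)$, using that the dual exponent of $p/q$ appropriately rescaled gives the Hölder pairing with a weight; here one uses that $\frac1q-\frac1p$-type bookkeeping matches the exponent $\frac{1}{\frac1q-\frac1s}$ appearing in $M_{(\vec r,\cdot)}$. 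Applying \eqref{eq:scalarwpropin} to each such $g$ bounds this by $C_T$ times
\[
\sup_g \nrmb{M_{(\vec r,\frac{1}{\frac1q-\frac1s})}(\|\vec f\|_{\vec X},g)}_{L^q(\R^d)}.
\]

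The second step is to recognize the right-hand side as a weighted bound for the $(m{+}1)$-sublinear maximal operator. By Hölder's inequality in the extra coordinate and the definition of $M_{(\vec r,\frac{1}{\frac1q-\frac1s})}$ one has, for each cube $Q$,
\[
\Big(\prod_{j=1}^m\ip{\|f_j\|_{X_j}}_{r_j,Q}\Big)\ip{g}_{\frac{1}{\frac1q-\frac1s},Q}\leq \ip{w}_{t,Q}^{-1}\cdot\Big(\text{localized pieces}\Big),
\]
so that the map $g\mapsto M_{(\vec r,\frac{1}{\frac1q-\frac1s})}(\|\vec f\|_{\vec X},g)$ is, up to the weight, controlled by $M_{(\vec r,s)}$ tested against $\vec f$ and $g$. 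Concretely, I would invoke the known weighted boundedness of $M_{(\vec r,s)}$ with target exponent $p$ and the weight class $A_{\vec p,(\vec r,s)}$ — this is exactly the $(m{+}1)$-linear analogue of Proposition~\ref{prop:multimaxwest} with the extra coordinate playing a dual role, and it is precisely the statement that furnishes the constant $[\vec w]_{\vec p,(\vec r,s)}^{\max\{\cdots\}}$ with the stated two-term maximum in the exponent. The upshot is
\[
\sup_g\nrmb{M_{(\vec r,\frac{1}{\frac1q-\frac1s})}(\|\vec f\|_{\vec X},g)}_{L^q(\R^d)}
\lesssim_{\vec p,q,\vec r,s}[\vec w]_{\vec p,(\vec r,s)}^{\max\left\{\frac{\frac1{\vec r}}{\frac1{\vec r}-\frac1{\vec p}},\frac{\frac1q-\frac1s}{\frac1p-\frac1s}\right\}}\prod_{j=1}^m\|f_j\|_{L^{p_j}_{w_j}(\R^d;X_j)}.
\]

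The final step is the density/extension argument. Having established the estimate for simple $\vec f\in L^\infty_c(\R^d;\vec X)$, one uses $m$-(sub)linearity of $\widetilde T$ together with the fact that simple functions in $L^\infty_c(\R^d;\vec X)$ are dense in $L^{\vec p}_{\vec w}(\R^d;\vec X)$ (here order-continuity of the $X_j$, equivalently of $X$, is what guarantees this density, and $q$-convexity is what makes all the averages $\ip{\|f_j\|_{X_j}}_{r_j,Q}$ and the duality in $X^q$ legitimate) to obtain a unique bounded extension; for the $m$-linear case one argues coordinate by coordinate via multilinearity and a standard Cauchy-sequence argument, and for the $m$-sublinear case one uses monotone approximation. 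I expect the main obstacle to be bookkeeping: matching the exponent $\frac{1}{\frac1q-\frac1s}$ in $M_{(\vec r,\cdot)}$ with the weighted duality pairing so that the relevant weight norm is exactly $[\vec w]_{\vec p,(\vec r,s)}$, and verifying that the two competing powers in the $\max$ — one coming from the $\vec f$-coordinates (as in Proposition~\ref{prop:multimaxwest}) and one from the dualized $g$-coordinate — are produced correctly; once the maximal operator bound is quoted in the right $(m{+}1)$-linear form, the rest is routine.
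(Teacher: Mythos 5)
Your skeleton is the same engine as the paper's proof: dualize the weighted norm, feed the hypothesis \eqref{eq:scalarwpropin} into a scalar weighted bound for the $(m{+}1)$-sublinear maximal operator with weight vector $(\vec{w},w^{-1})$, and extend by density and $m$-(sub)linearity. In the range $q\le p<s$ your direct route does work and even gives the sharp exponent without extrapolation: the precise facts you only gesture at are, first, the duality $\nrmb{\widetilde T(\vec f\hspace{2pt})}_{L^p_w(\R^d;X)}=\sup\cbraceb{\nrmb{\|\widetilde T(\vec f\hspace{2pt})\|_X\cdot g}_{L^q(\R^d)}:\ \|g\|_{L^{(\frac1q-\frac1p)^{-1}}_{w^{-1}}(\R^d)}\le 1}$ (your dual exponent is wrong: it must be $\frac{1}{\frac1q-\frac1p}$, not $\bigl(\frac{1}{\frac1p-\frac1s}\bigr)'$; they only coincide when $q=1$, $s=\infty$), and second, the identity $\bracb{(\vec w,w^{-1})}_{(\vec p,\frac{1}{\frac1q-\frac1p}),((\vec r,\frac{1}{\frac1q-\frac1s}),\infty)}=[\vec w]_{\vec p,(\vec r,s)}$ (the product weight is $\equiv 1$ and the $(m{+}1)$-st factor turns into $\langle w\rangle_{\frac{1}{\frac1p-\frac1s},Q}$). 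With these, Proposition~\ref{prop:multimaxwest} applied in its $(m{+}1)$-linear form at the exponents $(\vec p,\frac{1}{\frac1q-\frac1p})$ with target $L^q$ yields exactly the stated power $\max\bigl\{\frac{1/\vec r}{1/\vec r-1/\vec p},\frac{\frac1q-\frac1s}{\frac1p-\frac1s}\bigr\}$; no pointwise ``H\"older in the extra coordinate'' inequality involving $\ip{w}_{t,Q}^{-1}$ is needed or correct as written. The extension step is as in Lemmas~\ref{lem:extlem} and~\ref{lem:dommultdense} (note the density lemma rests on $q$-convexity and a measurability/dominated-convergence argument, not on order-continuity).

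The genuine gap is the range $p<q$. Both your duality identity and the auxiliary exponent $\frac{1}{\frac1q-\frac1p}$ are meaningless when $p<q$, yet the proposition claims all $\vec r<\vec p$, $p<s$, and in the multilinear applications this is the typical situation (e.g.\ Corollary~\ref{corollary:weights1} has $q=1$ and $p$ well below $1$). This is precisely why the paper does not argue directly at $\vec p$: it first proves the dualized estimate at one special tuple $\vec t$, defined by $\frac{1}{t_j}=\frac{\tau}{r_j}$ with $\tau$ chosen so that all the competing sharpness exponents coincide (all equal $\frac{1}{1-\tau}$), extends by density there, and then transfers to general $(\vec p,\vec w)$ via the quantitative extrapolation result Theorem~\ref{thm:qextrapolation} from \cite{Ni19}; the special choice of $\vec t$ is what keeps the extrapolation lossless (from a generic exponent one would pick up a product of two maxima rather than the single maximum, cf.\ the comparison at the end of Section~\ref{sec:applymain}). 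Even in the paper the final dualization presupposes $p\ge q$, and exponents below $q$ are ultimately reached by running the whole scheme with a smaller parameter $q$ in the hypothesis, which is the raison d'\^etre of the parameter $q$ in Theorem~\ref{thm:mainthm1}. So: your argument reproduces the result in the dualizable range with less machinery, but as it stands it cannot reach the full stated range; you need either the extrapolation step (with a well-chosen intermediate tuple) or an explicit restriction/reduction of the claim, plus the two bookkeeping corrections above.
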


Proposition~\ref{prop:wfromsdscalar} is essentially a consequence of Proposition~\ref{prop:multimaxwest} and, in certain cases, the quantitative multilinear extrapolation result in \cite{Ni19}. The reason we might have to use extrapolation is because sparse domination by forms yields, a priori, weighted bounds for the range of exponents where one can dualize the operator. Typically, in the multilinear case, this does not yield the full range of exponents where the operator satisfies weighted bounds. To recover this full range of exponents, we will use the following version of the extrapolation theorem in \cite{Ni19}:

\begin{theorem}\label{thm:qextrapolation}
Let $\vec{r}\in(0,\infty)^m$, $q \in (0,\infty)$, $s\in(q,\infty]$ and let $\vec{t}\in(0,\infty]^m$ satisfy $\vec{t}\geq\vec{r}$ and $q\leq t\leq s$.
Suppose we are given $\vec{p}\in(0,\infty]^m$ satisfying $\vec{p}>\vec{r}$, $q\leq p<s$, $\vec{w}\in A_{\vec{p},(\vec{r},s)}$, and $\vec{f}\in L^{\vec{p}}_{\vec{w}}(\R^n)$, $g\in L^{\frac{1}{\frac{1}{q}-\frac{1}{p}}}_{w^{-1}}(\R^n)$. Then there is a $\vec{W}\in A_{\vec{t},(\vec{r},s)}$ such that
\begin{equation}\label{eq:mainprop11}
\has{\prod_{j=1}^m\|f_j\|_{L^{t_j}_{W_j}(\R^n)}}\|g\|_{L^{\frac{1}{\frac{1}{q}-\frac{1}{t}}}_{W^{-1}}(\R^n)}\leq 2^{\frac{m^2}{q}}\has{\prod_{j=1}^m\|f_j\|_{L^{p_j}_{w_j}(\R^n)}}\|g\|_{L^{\frac{1}{\frac{1}{q}-\frac{1}{p}}}_{w^{-1}}(\R^n)}
\end{equation}
and
\begin{equation}\label{eq:mainprop22}
[\vec{W}]_{\vec{t},(\vec{r},s)}\lesssim_{\vec{p},\vec{r},s,\vec{t}}[\vec{w}]^{ \max\left\{\frac{\frac{1}{\vec{r}}-\frac{1}{\vec{t}}}{\frac{1}{\vec{r}}-\frac{1}{\vec{p}}},\frac{\frac{1}{t}-\frac{1}{s}}{\frac{1}{p}-\frac{1}{s}}\right\}}_{\vec{p},(\vec{r},s)}.
\end{equation}
\end{theorem}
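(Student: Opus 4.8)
The plan is to reduce to the case $q=1$ by a rescaling and then invoke the quantitative multilinear limited-range Rubio de Francia iteration lemma of \cite{Ni19}. For the reduction one uses the elementary identities $\ip{h^q}_{u/q,Q}=\ip{h}_{u,Q}^q$, $\nrmb{|f_j|^q}_{L^{p_j/q}_{w_j^q}}=\|f_j\|_{L^{p_j}_{w_j}}^q$ and $\nrmb{|g|^q}_{L^{1/(1-q/p)}_{w^{-q}}}=\|g\|_{L^{1/(1/q-1/p)}_{w^{-1}}}^q$ (together with their $\vec{t}$-analogues), which give $[\vec{w}^q]_{\vec{p}/q,(\vec{r}/q,s/q)}=[\vec{w}]_{\vec{p},(\vec{r},s)}^q$ and $[\vec{W}^q]_{\vec{t}/q,(\vec{r}/q,s/q)}=[\vec{W}]_{\vec{t},(\vec{r},s)}^q$; since the exponent $\max\cbraces{\frac{1/\vec{r}-1/\vec{t}}{1/\vec{r}-1/\vec{p}},\frac{1/t-1/s}{1/p-1/s}}$ only involves differences of reciprocals it is unchanged when every Lebesgue exponent is divided by $q$. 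Replacing $(\vec{r},\vec{p},\vec{t},s)$ by $(\vec{r}/q,\vec{p}/q,\vec{t}/q,s/q)$, $\vec{w}$ by $\vec{w}^q$, $(\vec{f},g)$ by $(|\vec{f}|^q,|g|^q)$, and $\vec{W}$ by $\vec{W}^{1/q}$ at the end, it therefore suffices to treat $q=1$, the constant $2^{m^2/q}$ being the $q$-th root of the normalized constant $2^{m^2}$.

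For $q=1$ the argument is the Rubio de Francia iteration. Writing $h_j:=|f_j|w_j$ and $h_0:=|g|w^{-1}$, so that $\|h_j\|_{L^{p_j}}=\|f_j\|_{L^{p_j}_{w_j}}$ and $\|h_0\|_{L^{p'}}=\|g\|_{L^{p'}_{w^{-1}}}$ with $1/p'=1-1/p$, one constructs $\vec{W}$ componentwise out of Rubio de Francia iterates of the multisublinear Hardy--Littlewood maximal operator $M_{\vec{r}}$ from Section~\ref{section:multiHL} and of its dual counterpart adapted to the $s$-exponent. Each such iterate dominates the function it is built from pointwise, has at most twice its norm in the relevant Lebesgue space, and is an $A_1$-type weight for the corresponding maximal operator with constant comparable to (twice) its operator norm; by Proposition~\ref{prop:multimaxwest} and the analogous dual estimate these operator norms are controlled by $[\vec{w}]_{\vec{p},(\vec{r},s)}$ raised to $\frac{1/\vec{r}}{1/\vec{r}-1/\vec{p}}$, respectively $\frac{1-1/s}{1/p-1/s}$. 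One then sets the $W_j$ equal to suitable powers and products of these iterates and of the original weights, arranged so that $W=\prod_{j=1}^m W_j$; combining the $A_1$-type property with the $A_{\vec{p},(\vec{r},s)}$-condition for $\vec{w}$ gives $\vec{W}\in A_{\vec{t},(\vec{r},s)}$ with the quantitative bound \eqref{eq:mainprop22}, while the pointwise domination $h_j\le$ (iterate) and $h_0\le$ (iterate) yields \eqref{eq:mainprop11} with one factor $2$ per Rubio de Francia step over the $m+1$ slots, i.e. $2^{m^2}$ in total. This is exactly the extrapolation theorem of \cite{Ni19}, to which we refer for the (somewhat involved) details.

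The main obstacle, already resolved in \cite{Ni19}, is the constraint $W=\prod_{j=1}^m W_j$: the $W_j$ cannot be produced by $m$ independent one-dimensional Rubio de Francia algorithms, and the iteration must instead be run on a single scalar function and operator encoding all $m+1$ slots simultaneously, which is also what pins the constant at $2^{m^2}$ (before rescaling) and dictates the precise powers entering the definition of the $W_j$. A secondary technical point is the correct interpretation of the degenerate cases $p_j=\infty$, $t_j=\infty$, $t=q$ and $t=s$, in which some of the exponents $\tfrac1{1/q-1/t}$ and $\tfrac1{1/t-1/s}$ are infinite and the corresponding Lebesgue spaces and maximal operators degenerate. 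In the present account the only genuinely new ingredient is the rescaling carried out in the first paragraph.
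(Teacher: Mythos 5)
Your proposal is correct and follows essentially the same route as the paper: the paper's proof is precisely the rescaling you describe, applying \cite[Theorem~3.1]{Ni19} with exponents $(\vec{r}/q,(s/q)')$, $(\vec{t}/q,(t/q)')$, $(\vec{p}/q,(p/q)')$, weights $(\vec{w}^q,w^{-q})$, and functions $(|\vec{f}|^q,|g|^q)$, so your reduction to $q=1$ followed by the quantitative extrapolation of \cite{Ni19} (whose Rubio de Francia iteration you only sketch, deferring to \cite{Ni19} exactly as the paper does) is the same argument in two steps instead of one.
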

\begin{proof}
This follows from an application of \cite[Theorem~3.1]{Ni19} with $\pmb{r}=(\frac{\vec{r}}{q},(\frac{s}{q})')$, $\pmb{q}=(\frac{\vec{t}}{q},(\frac{t}{q})')$, $\pmb{p}=(\frac{\vec{p}}{q},(\frac{p}{q})')$, $\pmb{w}=(\vec{w}^q,w^{-q})$, and $\pmb{f}=(|\vec{f}|^q,|g|^q)$.
\end{proof}

Next, we prove an extension lemma for multi(sub)linear operators, which will be needed in the proof of Proposition~\ref{prop:wfromsdscalar}. In the case $m=1$, a bounded (sub)linear operator satisfies a reverse triangle inequality type estimate and thus, in particular, is uniformly continuous. Therefore, if it takes values in a complete space, it extends to an operator on the closure of its domain. For $m>2$ this uniform continuity needs to be replaced by a local uniform continuity. This again suffices to extend the operator to the closure of its domain. While this result is straightforward, we include it here for convenience of the reader.
\begin{lemma}\label{lem:extlem}
Let $\vec{Y}$ be an $m$-tuple of quasi-normed vector spaces, let $Z$ be a quasi-Banach function space,  and let $U_j\subseteq Y_j$ be a dense subspace for each $j\in\{1,\ldots,m\}$. If $T:\vec{U}\to Z$
is bounded and satisfies the pointwise a.e. estimate
\begin{equation}\label{eq:extlemest}
\begin{split}
|T(\vec{f}\hspace{2pt})-T(\vec{g}\hspace{2pt})|&\leq\sum_{j=1}^m|T(f_1,\ldots,f_{j-1},f_j-g_j,g_{j+1},\ldots,g_m)|\\ &\quad+|T(g_1,\ldots,g_{j-1},g_j-f_j,f_{j+1},\ldots,f_m)|
\end{split}
\end{equation}
for all $\vec{f},\vec{g}\in\vec{U}$, then $T$ uniquely extends to a bounded operator $\vec{Y}\to Z$ with a comparable bound.

If $T$ is $m$-(sub)linear, then it satisfies \eqref{eq:extlemest} and its extension, again denoted by $T$, is an $m$-(sub)linear operator as well.
\end{lemma}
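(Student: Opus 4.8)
The plan is to prove the two assertions of Lemma~\ref{lem:extlem} separately. First I address the extension statement under the hypothesis~\eqref{eq:extlemest}. The key observation is that~\eqref{eq:extlemest} combined with boundedness of $T$ on $\vec{U}$ gives a \emph{local} continuity estimate: if $\vec{f},\vec{g}\in\vec{U}$ with all coordinates bounded in quasi-norm by some $R>0$, then
\[
\nrm{T(\vec{f}\hspace{2pt})-T(\vec{g}\hspace{2pt})}_Z\lesssim \sum_{j=1}^m R^{m-1}\,\nrm{f_j-g_j}_{Y_j},
\]
where the implicit constant is the quasi-norm of $T$ on $\vec{U}$ (up to the quasi-norm constant of $Z$, which enters when summing the $m$ terms on the right of~\eqref{eq:extlemest}; since $m$ is fixed this only costs a constant). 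Now given $\vec{f}\in\vec{Y}$, pick sequences $f_j^{(n)}\in U_j$ with $f_j^{(n)}\to f_j$; these are bounded, say all within radius $R$, so the local estimate shows $(T(\vec{f}\hspace{2pt}^{(n)}))_n$ is Cauchy in the quasi-Banach space $Z$, hence converges; one checks the limit is independent of the approximating sequences (again via the local estimate applied to the ``interleaved'' sequence), so $\widetilde T(\vec{f}\hspace{2pt}):=\lim_n T(\vec{f}\hspace{2pt}^{(n)})$ is well defined. Boundedness of $\widetilde T$ with a comparable constant follows by passing to the limit in $\nrm{T(\vec{f}\hspace{2pt}^{(n)})}_Z\leq \nrm{T}\prod_j\nrm{f_j^{(n)}}_{Y_j}$, using that the quasi-norm of $Z$ is continuous with respect to its own convergence (a consequence of the quasi-triangle inequality). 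Uniqueness is immediate since any bounded extension must agree with this limit on the dense set.

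Next I verify the second assertion: an $m$-(sub)linear $T$ satisfies~\eqref{eq:extlemest}. In the $m$-linear case this is a telescoping identity: write
\[
T(\vec{f}\hspace{2pt})-T(\vec{g}\hspace{2pt})=\sum_{j=1}^m\bigl[T(f_1,\dots,f_j,g_{j+1},\dots,g_m)-T(f_1,\dots,f_{j-1},g_j,g_{j+1},\dots,g_m)\bigr],
\]
and each bracketed difference equals $T(f_1,\dots,f_{j-1},f_j-g_j,g_{j+1},\dots,g_m)$ by linearity in the $j$-th slot; applying the quasi-triangle inequality of $Z$ yields~\eqref{eq:extlemest} (in fact with only the first of the two terms on the right, which is even stronger). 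In the $m$-sublinear case, subadditivity in each slot gives $T(\dots,f_j,\dots)\leq T(\dots,f_j-g_j,\dots)+T(\dots,g_j,\dots)$ pointwise (here one uses $f_j = (f_j - g_j) + g_j$ and sublinearity; if $Y_j$ is only a quasi-normed space over $\C$ one absorbs signs into the positivity, noting $|f_j-g_j|$ controls things — more carefully, sublinearity as defined is stated for sums, and combined with positivity of $T$ one gets both $T(\dots,f_j,\dots)-T(\dots,g_j,\dots)\leq T(\dots,f_j-g_j,\dots)$ and the symmetric inequality with $f$ and $g$ swapped), so the \emph{absolute value} of the difference is dominated by the sum of the two terms, and telescoping over $j$ exactly as above produces~\eqref{eq:extlemest}. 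Finally, that the extension is again $m$-(sub)linear: linearity (resp. subadditivity and positivity) is preserved under the limiting procedure because these are closed conditions — given $\vec{f},\vec{f}\,'\in\vec{Y}$ with $U_j$-approximations, the approximations of $f_j+f_j'$ can be taken to be the sums of the individual approximations, and one passes to the limit in the defining (in)equalities using continuity of the $Z$-quasi-norm and of the $Z$-lattice order (which is closed under quasi-norm limits by the Fatou-type property).

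I expect the main obstacle to be purely bookkeeping with the quasi-norm constants: since $Z$ and the $Y_j$ are only quasi-Banach, the triangle inequality carries a constant, so summing the $m$ terms in~\eqref{eq:extlemest} and iterating the Cauchy estimate must be done carefully to confirm the final bound is still merely ``comparable'' (i.e. differs from $\nrm{T}$ by a factor depending only on $m$ and the quasi-norm constants, which are fixed data). One clean way to handle this is the Aoki--Rolewicz theorem, which lets one replace each quasi-norm by an equivalent $p$-norm (for suitable $p\in(0,1]$) that is genuinely subadditive after raising to the $p$-th power; then all the above estimates become routine additive manipulations and one transfers back at the end. Everything else — the Cauchy argument, independence of the approximating sequence, preservation of (sub)linearity — is standard density machinery, which is why the statement is flagged as ``straightforward'' and included only for the reader's convenience.
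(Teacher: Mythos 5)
Your proposal is correct and follows essentially the same route as the paper's proof: the local Lipschitz-type estimate obtained by combining \eqref{eq:extlemest} with boundedness, the Cauchy-sequence/density extension with well-definedness checked by another application of that estimate, the telescoping argument (with the symmetric version giving the second family of terms in the sublinear case), and passing the pointwise (in)equalities to the limit. The only cosmetic differences are in the last steps: the paper bounds the norm of the extension via a $\liminf$ with the quasi-triangle constants rather than appealing to ``continuity'' of the quasi-norm, and it justifies preservation of $m$-(sub)linearity through convergence in measure and a.e.\ convergence along a subsequence, whereas you invoke order-closedness of the cone under quasi-norm limits (which is true, though it follows from the compatibility of the quasi-norm with the order rather than from the Fatou property as you suggest).
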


\begin{proof}
By the compatibility of the norm on $Z$ with pointwise estimates, \eqref{eq:extlemest} and boundedness of $T$ yields for $\vec{f},\vec{g}\in\vec{U}$
\begin{equation}\label{eq:extlemest2}
\begin{split}
\|T(\vec{f}\hspace{2pt})-T(\vec{g}\hspace{2pt})\|_Z&\lesssim\sum_{j=1}^m\|T(f_1,\ldots,f_{j-1},f_j-g_j,g_{j+1},\ldots,g_m)\|_Z\\ &\quad+\|T(g_1,\ldots,g_{j-1},g_j-f_j,f_{j+1},\ldots,f_m)\|_Z\\
&\lesssim\sum_{j=1}^m\has{\prod_{\substack{l=1\\l\neq j}}^m(\|f_l\|_{Y_l}+\|g_l\|_{Y_l})}\|f_j-g_j\|_{Y_j}.
\end{split}
\end{equation}
Now, if $\vec{f}\in\vec{Y}$ and $(f_j^k)_{k\in\N}$ is a sequence in $U_j$ converging to $f_j$ in $Y_j$ for all $j\in\{1,\ldots,m\}$, then \eqref{eq:extlemest2} implies that $(T(\vec{f}\hspace{2pt}^k))_{k\in\N}$ is a Cauchy-sequence in $Z$. The first assertion then follows by defining $T(\vec{f}\hspace{2pt})$ to be the limit of this sequence in $Z$. Note that this is well-defined since it follows from another application of \eqref{eq:extlemest2} that this limit does not depend on the approximating sequences of the $f_j$. For the bound we have
\[
\|T(\vec{f}\hspace{2pt})\|_Z\leq\beta\liminf_{k\to\infty}\|T(\vec{f}\hspace{2pt}^k)\|_Z\leq\beta c\prod_{j=1}^m\limsup_{k\to\infty}\|f_j^k\|_{Y_j}\leq\beta\has{\prod_{j=1}^m\alpha_j}c \prod_{j=1}^m\|f_j\|_{Y_j}.
\]
where $c$, $\alpha_j$, and $\beta$ are respectively the bound for $T$, the quasi-triangle inequality constant of $Y_j$, and the quasi-triangle inequality constant of $Z$.

If $T$ is $m$-sublinear, then it follows from iterating the inequality
\[
T(\vec{f}\hspace{2pt})\leq T(g_1,f_2,\ldots,f_m)+T(f_1-g_1,f_2,\ldots,f_m)
\]
for all $f_j$ in the first term on the right for $j=2$ to $j=m$, that
\[
T(\vec{f}\hspace{2pt}) \leq T(\vec{g}\hspace{2pt})+\sum_{j=1}^mT(f_1,\ldots,f_{j-1},f_j-g_j,g_{j+1},\ldots,g_m).
\]
By symmetry, we obtain
\[
T(\vec{g}\hspace{2pt})\leq T(\vec{f}\hspace{2pt})+\sum_{j=1}^mT(g_1,\ldots,g_{j-1},g_j-f_j,f_{j+1},\ldots,f_m)
\]
and by combining these two estimates we obtain \eqref{eq:extlemest}. If $T$ is $m$-linear, these first two inequalities are actually equalities, so we can proceed analogously.
The final assertion is a consequence of the fact that $m$-(sub)linearity is a pointwise property and that convergence in $Z$ implies local convergence in measure (see e.g. \cite[Theorem 1.4]{BS88}), and thus a.e. convergence on a subsequence.
\end{proof}
Note that if $Z=L^p_w(\R^n)$ for a weight $w$ and a $p\in(0,\infty]$, then by continuity of the (quasi-)norm $\|\cdot\|_{L^p_w(\R^n)}$, the extension $T$ will have the same bound as the original $T$.

Since we are working with weights that are not necessarily locally integrable, it is not a-priori clear that the simple functions of compact support are dense in the weighted Lebesgue spaces. We prove that this density result does indeed hold.
\begin{lemma}\label{lem:dommultdense}
Let $w$ be a weight, $p,q\in(0,\infty)$ and $X$ a $q$-convex quasi-Banach function space. Then the simple functions in $L^p_w(\R^n;X)\cap L^\infty_c(\R^n;X)$ are dense in $L^p_w(\R^n;X)$.
\end{lemma}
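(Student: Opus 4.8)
The plan is to build the approximants by hand from the standard staircase approximation of a strongly measurable function, carried out \emph{adapted to $f$} so that the approximants inherit the pointwise bound $\|\sigma_k(\cdot)\|_X\lesssim\|f(\cdot)\|_X$, and then to truncate them to have compact support. The only genuine obstacle is that $w$ is merely a weight, so $w^p$ need not be locally integrable; consequently a naive approximation of $f$ by simple functions that are constant on a fixed bounded partition of $\R^n$ need not lie in $L^p_w(\R^n;X)$ at all. Tying the approximation to $f$ itself circumvents this. (The $q$-convexity hypothesis is inherited from the surrounding results and plays no role in the argument.)

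\emph{Staircase approximation.} Given $f\in L^p_w(\R^n;X)$, note that as an element of a Bochner space it is strongly measurable, so there is a separable closed subspace $X_0\subseteq X$ with $f(x)\in X_0$ for a.e.\ $x$. I would fix a countable dense subset $\{y_j\}_{j\ge1}$ of $X_0$ with $y_1=0$ and, for $k\in\N$, set $\sigma_k(x):=y_{j_k(x)}$, where $j_k(x)$ is the least index in $\{1,\dots,k\}$ minimising $\|f(x)-y_i\|_X$. Each map $x\mapsto\|f(x)-y_i\|_X$ is measurable, so $\sigma_k$ is a simple function $\R^n\to X$ taking values in $\{y_1,\dots,y_k\}$; density of $\{y_j\}$ in $X_0$ gives $\sigma_k\to f$ pointwise a.e.; and since $y_1=0$ competes in the minimisation, $\|f(x)-\sigma_k(x)\|_X\le\|f(x)\|_X$, whence $\|\sigma_k(x)\|_X\le 2c_X\|f(x)\|_X$ for a.e.\ $x$, with $c_X$ the quasi-triangle constant of $X$.

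\emph{Convergence and truncation.} The pointwise bound gives $\int_{\R^n}\|\sigma_k\|_X^pw^p\dd x\le(2c_X)^p\|f\|_{L^p_w(\R^n;X)}^p<\infty$, so $\sigma_k\in L^p_w(\R^n;X)$, while $\|\sigma_k(\cdot)\|_X\le\max_{j\le k}\|y_j\|_X$ shows $\sigma_k\in L^\infty(\R^n;X)$. A routine use of the quasi-triangle inequality bounds $\|\sigma_k-f\|_X^p$ by a fixed constant multiple of $\|f\|_X^p$, uniformly in $k$; since $\|f\|_X^pw^p\in L^1(\R^n)$ and $\|\sigma_k-f\|_X^pw^p\to0$ a.e., dominated convergence then gives $\|\sigma_k-f\|_{L^p_w(\R^n;X)}\to0$. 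Finally I would set $f_k:=\sigma_k\ind_{B(0,k)}$; this is a simple function supported in $\close{B(0,k)}$ with $\|f_k(\cdot)\|_X\le\|\sigma_k(\cdot)\|_X\in L^\infty$, hence $f_k\in L^\infty_c(\R^n;X)\cap L^p_w(\R^n;X)$, and
\[
\|\sigma_k-f_k\|_{L^p_w(\R^n;X)}^p=\int_{\R^n\setminus B(0,k)}\|\sigma_k\|_X^pw^p\dd x\le(2c_X)^p\int_{\R^n\setminus B(0,k)}\|f\|_X^pw^p\dd x\longrightarrow0
\]
as $k\to\infty$, since $\|f\|_X^pw^p$ is integrable. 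Combining the two limits through the quasi-triangle inequality of $L^p_w(\R^n;X)$ yields $f_k\to f$ in $L^p_w(\R^n;X)$, the asserted density.
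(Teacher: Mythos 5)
Your proof is correct, but it follows a genuinely different route from the paper's. The paper argues in two steps: for $p,q\geq 1$ it quotes the standard Bochner-space approximation result \cite[Corollary~1.1.21]{HNVW16} (which supplies simple approximants with the pointwise domination $\|f_k(x)\|_X\leq\|f(x)\|_X$) together with dominated convergence and the truncation $\ind_{B(0,k)}$, and it then reduces the quasi-Banach case to this one by passing to the concavification $X^{2^{-n}}$, the exponent $2^np$ and the weight $w^{2^{-n}}$, transferring the approximation back via the factorization $|g_k^{2^n}-g^{2^n}|=|g_k-g|\prod_{l=0}^{n-1}|g_k^{2^l}+g^{2^l}|$ and H\"older's inequality; the $q$-convexity is exactly what makes the rescaled space a Banach function space so that the first step applies. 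You instead build the dominated simple approximants by hand (nearest point in a countable dense set containing $0$, which is essentially the proof of the cited corollary redone with a quasi-norm, at the cost of the harmless factor $2c_X$), so a single argument covers all $p,q\in(0,\infty)$ at once, no case distinction or rescaling is needed, and — as you observe — the $q$-convexity hypothesis becomes superfluous for this lemma; the paper's version buys brevity by citation and keeps the rescaling trick, which is standard in this context, while yours is self-contained and slightly more general. The one point you should acknowledge is the measurability of $x\mapsto\|f(x)-y_i\|_X$: a quasi-norm need not be continuous for its own topology, so this is not automatic from strong measurability alone; it does follow here because the Fatou property makes $\|\cdot\|_X$ lower semicontinuous along a.e.\ convergent sequences (hence Borel on $X$), or alternatively one can run your minimisation with an equivalent continuous quasi-norm — and in any case this is the same implicit assumption already needed for $\|f(\cdot)\|_X$ to be measurable, i.e.\ for $L^p_w(\R^n;X)$ to be defined at all, so it is a remark rather than a gap.
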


\begin{proof}
  First suppose that $p,q \geq 1$ and fix $f \in L^p_w(\R^n;X)$. By \cite[Corollary 1.1.21]{HNVW16} and the dominated convergence theorem there exists a sequence of simple functions $(f_k)_{k \in \N}$  such that $f_k \to f$ in $L^p_w(\R^d;X)$, and $f_k(x) \to f(x)$ and $\nrm{f_k(x)}_X \leq \nrm{f}_X$ for a.e. $x \in \R^d$. Setting $(g_k)_{k\in \N} = (f_k \ind_{B(0,k)})_{k \in \N}$ it follows that $g_k \in L^p_w(\R^n;X)\cap L^\infty_c(\R^n;X)$ for all $k \in \N$ and
  $g_k \to f$ in $L^p_w(\R^n;X)$ by the dominated convergence theorem, proving the lemma.

Now consider the case $p< 1$ and/or $q < 1$. Fix $n\in\N$ so that $2^np,2^nq>1$. For $f\in L^p_w(\R^n;X)$ we can pick a positive $g\in L^{2^np}_{w^{2^{-n}}}(\R^n;X^{2^{-n}})$ with $g^{2^n}=|f|$. By our previous result we can find a positive sequence of simple functions $(g_k)_{k\in\N}$ in $L^{2^np}_{w^{2^{-n}}}(\R^n;X^{2^{-n}})\cap L^{\infty}_c(\R^n;X^{2^{-n}})$ converging to $g$. Setting $f_k:=g_k^{2^n}\sgn(f)\in L^p_w(\R^n;X)\cap L^\infty_c(\R^n;X)$ we compute
\[
|f_k-f|=|g_k^{2^n}-g^{2^n}|=|g_k-g|\prod_{l=0}^{n-1}|g_k^{2^l}+g^{2^l}|
\]
so that by H\"older's inequality
\[
\|f_k-f\|_{L^p_w(\R^n;X)}\leq\|g_k-g\|_{L^{2^np}_{w^{2^{-n}}}(\R^n;X^{2^{-n}})} \prod_{l=0}^{n-1}\|g_k^{2^l}+g^{2^l}\|_{L^{2^{n-l}p}_{w^{2^{-(n-l)}}}(\R^n;X^{2^{-n-l}})}.
\]
Since $\|g_k^{2^l}+g^{2^l}\|$ is bounded in $k$, we conclude that $f_k\to f$ in $L^p_w(\R^n;X)$.
\end{proof}

We are now ready to prove Proposition~\ref{prop:wfromsdscalar}.
\begin{proof}[Proof of Proposition~\ref{prop:wfromsdscalar}]
Set $\frac{1}{t_j}:=\frac{\tau}{r_j}$ with $\frac{1}{\tau}=\frac{\frac{1}{r}-\frac{1}{s}+\frac{1}{q}}{\frac{1}{q}}>1$. Noting that $$\frac{1}{t}=\frac{\tau}{r}=\frac{\frac{1}{r}}{\frac{1}{r}+\frac{1}{q}-\frac{1}{s}} \frac{1}{q}\in(\frac{1}{s},\frac{1}{q}),$$
\[
\frac{\frac{1}{r_1}}{\frac{1}{r_1}-\frac{1}{t_1}}=\ldots=\frac{\frac{1}{r_m}}{\frac{1}{r_m}-\frac{1}{t_m}} =\frac{\frac{1}{q}-\frac{1}{s}}{\frac{1}{t}-\frac{1}{s}}=\frac{1}{1-\tau},
\]
and
\[
\bracb{(\vec{W},W^{-1})}_{(\vec{t},\frac{1}{\frac{1}{q}-\frac{1}{t}}),((\vec{r},\frac{1}{\frac{1}{q}-\frac{1}{s}}),\infty)}=[\vec{W}]_{\vec{t},(\vec{r},s)},
\]
for $\vec{W}\in A_{\vec{t},(\vec{r},s)}$, it follows from \eqref{eq:scalarwpropin} and Proposition~\ref{prop:multimaxwest} that
\begin{equation}\label{eq:wfromsdscalar1}
\begin{split}
\nrmb{\|\widetilde{T}(\vec{f}\hspace{2pt})\|_X\cdot g}_{L^q(\R^d)}&\leq C_T\,\nrmb{M_{(\vec{r},\frac{1}{\frac{1}{q}-\frac{1}{s}})}(\|\vec{f}\|_{\vec{X}}, g)}_{L^q(\R^d)}\\
&\lesssim_{\vec{t},\vec{r}} C_T\, [\vec{W}]^{\frac{1}{1-\tau}}_{\vec{t},(\vec{r},s)} \Big(\prod_{j=1}^m\|f_j\|_{L^{t_j}_{W_j}(\R^d;X_j)}\Big)\|g\|_{L_{W^{-1}}^{\ha{\frac{1}{q}-\frac{1}{t}}^{-1}}(\R^d)}
\end{split}
\end{equation}
for all $\vec{W}\in A_{\vec{t},(\vec{r},s)}$ and all simple $f_j\in L^{t_j}_{W_j}(\R^d)\cap L^\infty_c(\R^n)$, $g\in L_{W^{-1}}^{\ha{\frac{1}{q}-\frac{1}{t}}^{-1}}(\R^d)\cap L^\infty_c(\R^n)$. By Lemma~\ref{lem:extlem} and Lemma~\ref{lem:dommultdense}, $\widetilde{T}$ has a unique extension satisfying this estimate for all $\vec{W}\in A_{\vec{t},(\vec{r},s)}$, $\vec{f}\in L^{\vec{t}}_{\vec{W}}(\R^d;\vec{X})$, and $g\in L_{W^{-1}}^{\ha{\frac{1}{q}-\frac{1}{t}}^{-1}}(\R^d)$.

Now let $\vec{p}\in (0,\infty]^m$ with $\vec{r}<\vec{p}$ and $p<s$, $\vec{w}\in A_{\vec{p},(\vec{r},s)}$, and $\vec{f}\in L^{\vec{p}}_{\vec{w}}(\R^n;\vec{X})$, $g\in L_{w^{-1}}^{\ha{\frac{1}{q}-\frac{1}{p}}^{-1}}(\R^d)$. It follows from applying Theorem~\ref{thm:qextrapolation} with $f_j$ replaced by $\|f_j\|_{X_j}$, that there is a $\vec{W}\in A_{\vec{t},(\vec{r},s)}$ such that
\[
\has{\prod_{j=1}^m\|f_j\|_{L^{t_j}_{W_j}(\R^n;X_j)}}\|g\|_{L^{\frac{1}{\frac{1}{q}-\frac{1}{t}}}_{W^{-1}}(\R^n)}\leq 2^{\frac{m^2}{q}}\has{\prod_{j=1}^m\|f_j\|_{L^{p_j}_{w_j}(\R^n;X_j)}}\|g\|_{L^{\frac{1}{\frac{1}{q}-\frac{1}{p}}}_{w^{-1}}(\R^n)}
\]
and
\[
[\vec{W}]_{\vec{t},(\vec{r},s)}\lesssim_{\vec{p},\vec{r},s,\vec{t}}[\vec{w}]^{(1-\tau)\cdot \max\left\{\frac{\frac{1}{\vec{r}}}{\frac{1}{\vec{r}}-\frac{1}{\vec{p}}},\frac{\frac{1}{q}-\frac{1}{s}}{\frac{1}{p}-\frac{1}{s}}\right\}}_{\vec{p},(\vec{r},s)}.
\]
Then it follows from \eqref{eq:wfromsdscalar1} that
\begin{align*}
\nrmb{\|\widetilde{T}(\vec{f}\hspace{2pt})\|_X\cdot g}_{L^q(\R^d)}
&\lesssim_{\vec{t},\vec{r}} C_T\, [\vec{W}]^{\frac{1}{1-\tau}}_{\vec{t},(\vec{r},s)} \Big(\prod_{j=1}^m\|f_j\|_{L^{t_j}_{W_j}(\R^d;X_j)}\Big)\|g\|_{L_{W^{-1}}^{\ha{\frac{1}{q}-\frac{1}{t}}^{-1}}(\R^d)}\\
&\lesssim_{\vec{p},q,\vec{r},s,\vec{t}}C_T[\vec{w}]^{ \max\left\{\frac{\frac{1}{\vec{r}}}{\frac{1}{\vec{r}}-\frac{1}{\vec{p}}},\frac{\frac{1}{q}-\frac{1}{s}}{\frac{1}{p}-\frac{1}{s}}\right\}}_{\vec{p},(\vec{r},s)}
\Big(\prod_{j=1}^m\|f_j\|_{L^{p_j}_{w_j}(\R^d;X_j)}\Big)\|g\|_{L_{w^{-1}}^{\ha{\frac{1}{q}-\frac{1}{p}}^{-1}}(\R^d)}.
\end{align*}
The assertion now follows from the duality
\[
\nrmb{T(\vec{f}\hspace{2pt})}_{L^p_w(\R^d;X)}=\nrmb{\|T(\vec{f}\hspace{2pt})\|_X^q}^{\frac{1}{q}}_{L^{\frac{p}{q}}_{w^q}(\R^d)} =\sup_{\substack{\|g\|_{L_{w^{-1}}^{\ha{\frac{1}{q}-\frac{1}{p}}^{-1}}\hspace{-10pt}(\R^d)}=1}}\nrmb{\|\widetilde{T}(\vec{f}\hspace{2pt})\|_X\cdot g}_{L^q(\R^d)}.
\]
\end{proof}

\section{The multisublinear lattice maximal operator}\label{section:multiHL}
In this section we will introduce and study properties of the multilinear lattice maximal operator, which will play a major role in the proof of our main theorem.
 We will start by reviewing the case $m=1$. Let $X$ be a Banach function space and let $\mc{D}$ be a finite collection of cubes in $\R^d$. For any $f\in L^1_{\loc}(\R^d;X)$ we define
  \begin{equation*}
   \widetilde{M}^{\mc{D}}f(x):= \sup_{Q\in \mc{D}} \, \ipb{f}_{1,Q} \ind_Q(x), \qquad x\in \R^d.
  \end{equation*}
  where the supremum is taken in the lattice sense. We say that $X$ has the \emph{Hardy--Littlewood property} and write $X \in \HL$ if for some $p \in (1,\infty)$
  \begin{equation*}
     \nrm{\widetilde{M}}_{p,X}:= \sup_{\mc{D}}\, \nrmb{\widetilde{M}^{\mc{D}}}_{L^p(\R^d;X) \to L^p(\R^d;X)} <\infty,
  \end{equation*}
  where the supremum is taken over all  finite collection of cubes $\mc{D}$. This property is independent of the exponent $p$ and the dimension $d$ (see \cite{GMT93})
  and even the operator norm $ \nrm{\widetilde{M}}_{p,X}$ can be bounded by a constant independent of the dimension $d$ (see \cite{DK17b}).

  As an example we note that (iterated) $L^p$-spaces for $p \in (1,\infty]$ have the Hardy--Littlewood property. Moreover by a deep result of Bourgain \cite{Bo84} and Rubio de Francia \cite[Theorem 3]{Ru86} we have that both $X$ and $X^*$ have the Hardy--Littlewood property if and only if $X$ has the so-called $\UMD$ property. We will elaborate on the connection between the Hardy--Littlewood property and the $\UMD$ property in Section \ref{section:multiUMD}.

  If $X$ is an order-continuous Banach function space with the Hardy--Littlewood property and $p \in [1,\infty)$, we  define the \emph{lattice Hardy--Littlewood maximal operator} for $f \in L^p(\R^d;X)$ by
    \begin{equation*}
    \widetilde{M}f(x):= \sup_{Q} \, \ipb{f}_{1,Q} \ind_Q(x), \qquad x\in \R^d.
  \end{equation*}
  where the supremum is taken in the lattice sense over all cubes $Q \subseteq \R^d$. To see that
 $\widetilde{M}f:\R^d \to X$ is strongly measurable, let $\mc{D}_n$ be a finite collection of cubes for each $n \in \N$ such that $\mc{D}_n \subseteq \mc{D}_{n+1}$ and such that
 \begin{equation*}
    \widetilde{M}f(x)= \sup_{n \in \N}\widetilde{M}^{\mc{D}_n}f(x), \qquad x\in \R^d.
  \end{equation*}
By the Hardy--Littlewood property of $X$ we know that
$\sup_{n \in \N} \nrm{\widetilde{M}^{\mc{D}_n}f}_{L^{p,\infty}(\R^d;X)} <\infty$
 (see \cite[Theorem 1.7]{GMT93}).  Thus using the Fatou property of $X$ it follows  that $\widetilde{M}f(x) \in X$   for a.e. $x \in \R^d$. Moreover since $X$ is order-continuous, $\hab{\widetilde{M}^{\mc{D}_n}f(x)}_{n \in \N}$ converges to $\widetilde{M}f(x)$ for a.e. $x \in \R^d$. As $\widetilde{M}^{\mc{D}_n}f$ is a simple function for each $n\in \N$, we can conclude that $\widetilde{M}f$ is strongly measurable, i.e. $\widetilde{M}f \in L^0(\R^d;X)$.

\bigskip

Let us now turn to the multisublinear, rescaled generalization of the lattice Hardy--Littlewood maximal operator that we will need for our main result. Take $\vec{r} \in (0,\infty)^{m}$ and let $\vec{X}$ be an $\vec{r}$-convex $m$-tuple of quasi-Banach function spaces. For $\vec{f} \in L^{\vec{r}}_{\loc}(\R^d;\vec{X})$ and a finite collection of cubes $\mc{D}$ in $\R^d$ we define the multilinear analog of $\widetilde{M}^{\mc{D}}$ as
  \begin{equation*}
    \widetilde{M}_{\vec{r}}^{\mc{D}}(\vec{f}\hspace{2pt})(x):= \sup_{Q\in \mc{D}} \, \prod_{j=1}^{m}\ipb{f_j}_{r_j,Q} \ind_Q(x), \qquad x\in \R^d,
  \end{equation*}
where the supremum is taken in the lattice sense. Note that  $\widetilde{M}_{(1)}^{\mc{D}}=\widetilde{M}^{\mc{D}}$.

  \begin{definition}
   Let $\vec{X}$ be an $m$-tuple of quasi-Banach function spaces and take $\vec{r} \in (0,\infty)^{m}$.
   We say that $\vec{X}$ has the \emph{$\vec{r}$-Hardy--Littlewood property} and write $\vec{X} \in \HL_{\vec{r}}$ if $\vec{X}$ is $\vec{r}$-convex and for some $\vec{r} < \vec{p} \leq\infty$ we have
  \begin{equation*}
    \nrm{\widetilde{M}_{\vec{r}}}_{\vec{p},\vec{X}}:= \sup_{\mc{D}}\, \nrmb{\widetilde{M}_{\vec{r}}^{\mc{D}}}_{L^{\vec{p}}(\R;\vec{X}) \to L^p(\R;X)}<\infty,
  \end{equation*}
  where the supremum is taken over all finite collection of cubes.
  \end{definition}

 As in the linear case, the definition of $\HL_{\vec{r}}$ is independent of the exponent $p$ and the dimension $d$. The independence of $d$ can be shown using the method of rotations (see e.g. \cite[Remark 1.3]{GMT93}), and the independence of $p$ will follow from Corollary \ref{corollary:weightedHL} below.

  The multilinear Hardy--Littlewood property has the following properties:

\begin{proposition}\label{prop:HLlintomulti}
Let $\vec{X}$ be an $m$-tuple of quasi-Banach function spaces and take $\vec{r} \in (0,\infty)^{m}$.
If $X_j^{r_j} \in \HL$ for $1\leq j\leq m$, then $\vec{X} \in \HL_{\vec{r}}$.
\end{proposition}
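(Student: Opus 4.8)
The plan is to deduce the $\vec r$-Hardy--Littlewood bound for $\vec X$ from the ordinary Hardy--Littlewood bounds for the concavifications $X_1^{r_1},\ldots,X_m^{r_m}$, by dominating the multisublinear operator $\widetilde{M}_{\vec r}^{\mc D}$ lattice-pointwise by a product of the \emph{linear} lattice maximal operators and then applying H\"older's inequality twice.

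First, since each $X_j^{r_j}$ is by assumption a Banach function space, the quasi-norm $\nrm{\,\cdot\,}_{X_j^{r_j}}$ is a norm, so each $X_j$ is $r_j$-convex and hence $\vec X$ is $\vec r$-convex; this settles the first requirement in the definition of $\HL_{\vec r}$. Next I would fix any $\vec q\in(1,\infty)^m$ and set $\vec p:=\vec r\hspace{2pt}\vec q$, i.e. $p_j:=r_jq_j$, so that $\vec r<\vec p\leq\infty$ and $\frac1p=\sum_{j=1}^m\frac1{r_jq_j}$. Fix a finite collection of cubes $\mc D$ and $\vec f\in L^{\vec p}(\R^d;\vec X)$; since $\vec p>\vec r$ we have $\vec f\in L^{\vec r}_{\loc}(\R^d;\vec X)$, so $\widetilde{M}_{\vec r}^{\mc D}(\vec f\hspace{2pt})$ is well defined. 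Writing $|f_j|^{r_j}$ for the function $x\mapsto|f_j(x)|^{r_j}\in X_j^{r_j}$ (which is strongly measurable since the concavification map $X_j\to X_j^{r_j}$ is continuous), the identity $\nrm{|f_j(x)|^{r_j}}_{X_j^{r_j}}=\nrm{f_j(x)}_{X_j}^{r_j}$ shows $|f_j|^{r_j}\in L^{q_j}(\R^d;X_j^{r_j})$ with $\nrm{|f_j|^{r_j}}_{L^{q_j}(\R^d;X_j^{r_j})}=\nrm{f_j}_{L^{p_j}(\R^d;X_j)}^{r_j}$.

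The heart of the argument is the lattice-pointwise estimate
\[
\widetilde{M}_{\vec r}^{\mc D}(\vec f\hspace{2pt})(x)\leq\prod_{j=1}^m\bigl(\widetilde{M}^{\mc D}(|f_j|^{r_j})(x)\bigr)^{1/r_j},\qquad x\in\R^d,
\]
holding in $X$, where $\widetilde{M}^{\mc D}$ on the right denotes the linear lattice maximal operator applied to the $X_j^{r_j}$-valued function $|f_j|^{r_j}$. To see this, write $\ip{f_j}_{r_j,Q}=\bigl(\ip{|f_j|^{r_j}}_{1,Q}\bigr)^{1/r_j}$, the inner average being a Bochner integral in $X_j^{r_j}$; for each cube $Q\in\mc D$ with $x\in Q$ one has $\ip{|f_j|^{r_j}}_{1,Q}\leq\widetilde{M}^{\mc D}(|f_j|^{r_j})(x)$ in the lattice $X_j^{r_j}$. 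Applying the order-preserving map $\eta\mapsto\eta^{1/r_j}$ on the positive cone and then multiplying these inequalities over $j$ (products of positive elements of $L^0(\Omega)$ being order-preserving) gives $\prod_j\ip{f_j}_{r_j,Q}\leq\prod_j\bigl(\widetilde{M}^{\mc D}(|f_j|^{r_j})(x)\bigr)^{1/r_j}$; taking the supremum over the finitely many $Q\in\mc D$ with $x\in Q$ yields the estimate (for $Q\not\ni x$ the term vanishes).

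To conclude, take $X$-norms in the displayed estimate: by the definition of the product quasi-norm on $X=\prod_{j=1}^mX_j$ together with $\nrm{\eta^{1/r_j}}_{X_j}=\nrm{\eta}_{X_j^{r_j}}^{1/r_j}$ for $0\leq\eta\in X_j^{r_j}$, one obtains the scalar pointwise bound $\nrm{\widetilde{M}_{\vec r}^{\mc D}(\vec f\hspace{2pt})(x)}_X\leq\prod_{j=1}^m\nrm{\widetilde{M}^{\mc D}(|f_j|^{r_j})(x)}_{X_j^{r_j}}^{1/r_j}$. Taking $L^p(\R^d)$-norms, applying H\"older's inequality with the exponents $p_j=r_jq_j$ (whose reciprocals sum to $\frac1p$), then the $\HL$-bound for $X_j^{r_j}$ on $L^{q_j}(\R^d;X_j^{r_j})$, and finally the norm identity recorded above, gives
\[
\nrmb{\widetilde{M}_{\vec r}^{\mc D}(\vec f\hspace{2pt})}_{L^p(\R^d;X)}\leq\prod_{j=1}^m\nrm{\widetilde{M}}_{q_j,X_j^{r_j}}^{1/r_j}\prod_{j=1}^m\nrm{f_j}_{L^{p_j}(\R^d;X_j)},
\]
with a constant independent of $\mc D$; taking the supremum over finite $\mc D$ shows $\vec X\in\HL_{\vec r}$. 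The argument is essentially bookkeeping; the only points requiring care are the strong measurability and Bochner integrability of the concavified functions $|f_j|^{r_j}$ into $X_j^{r_j}$ and the matching of exponents so that both H\"older inequalities (in the product space and in $L^p(\R^d)$) line up, and I do not expect a genuine obstacle here.
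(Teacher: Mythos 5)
Your proof is correct and follows essentially the same route as the paper: dominate $\widetilde{M}_{\vec r}^{\mc D}$ by the product of the linear lattice maximal operators applied to the concavified functions $|f_j|^{r_j}$, then apply H\"older's inequality in the product space $X=\prod_j X_j$ and in $L^p(\R^d)$ together with the $\HL$ bounds for $X_j^{r_j}$. The only cosmetic difference is that you invoke the $p$-independence of the linear $\HL$ property to take arbitrary $q_j\in(1,\infty)$, whereas the paper simply chooses exponents $p_j/r_j$ for which $\nrm{\widetilde M}_{p_j/r_j,X_j^{r_j}}$ is finite; either choice suffices since the definition of $\HL_{\vec r}$ only requires one admissible $\vec p$.
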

\begin{proof}
Fix a finite collection of dyadic cubes $\mc{D}$ . Let
 $r_j<p_j\leq \infty$ be such that
$\nrm{\widetilde{M}}_{p_j/r_j,X_j^{r_j}}<\infty$ for
$1\leq j\leq m$.
For $\vec{f} \in L^{\vec{p}}(\R^d,\vec{X})$  we have by H\"older's inequality
\begin{align*}
\nrmb{\widetilde{M}_{\vec{r}}^\mc{D}\vec{f}\hspace{2pt}}_{L^p(\R^d;X)}
\leq \prod_{j=1}^m\nrmb{\widetilde{M}^\mc{D}(\abs{f_j}^{r_j})}_{L^{{p_j}/{r_j}}(\R^d;X_j^{r_j})}^{1/r_j}\leq \prod_{j=1}^m \nrm{\widetilde{M}}_{p_j/r_j,X_j^{r_j}}^{1/r_j} \nrm{f_j}_{L^{p_j}(\R^d;X_j)}.
\end{align*}
Thus taking the supremum over all $\vec{f}$ of norm $1$ and all finite collection of cubes $\mc{D}$ yields $\vec{X} \in \HL_{\vec{r}}$.
\end{proof}

We point out that Proposition \ref{prop:HLlintomulti} does not provide a necessary condition. Indeed, for $m=3$ we can take $X_1=\ell^2(\ell^\infty)$, $X_2=\ell^\infty(\ell^2)$ and $X_3=\ell^2(\ell^2)$. It is shown in \cite[Proposition 8.1]{NVW15} that $X_2$ does not satisfy the Hardy-Littlewood property. However, noting that $X_3=(X_1\cdot X_2)^\ast$, it follows from Proposition~\ref{prop:lebesgueumd} below that $\vec{X}\in\HL_{(1,1,1)}$.

\bigskip

Take $\vec{r},\vec{p} \in (0,\infty]^m$ with $\vec{r} \leq \vec{p}$, assume that $\vec{X} \in \HL_{\vec{r}}$ and that $X$ is order-continuous. We  define the \emph{multisublinear lattice maximal operator}
    \begin{equation*}
    \widetilde{M}_{\vec{r}}(\vec{f}\hspace{2pt})(x):= \sup_{Q} \, \prod_{j=1}^{m}\ipb{f_j}_{r_j,Q} \ind_Q(x), \qquad x\in \R^d,
  \end{equation*}
  for $\vec{f}\in L^{\vec{p}}(\R^d;\vec{X})$, where the supremum is taken in the lattice sense over all cubes $Q \subseteq \R^d$. As in the case $m=1$, the order-continuity of $X$ ensures that $\widetilde{M}_{\vec{r}}(\vec{f}\hspace{2pt}) \in L^0(\R^d;X)$.

We will show a sparse domination result for $\widetilde{M}_{\vec{r}}$, which is key in the proof of our main result. Sparse domination in the case $m=1$ was studied in \cite{HL17}. Here we will adapt the arguments from \cite{HL17} to the case where $m>1$. As a first step we will show a weak endpoint for $\widetilde{M}_{\vec{r}}^{\mc{D}}$.
\begin{lemma}\label{lemma:weakbiHL}
  Let $\vec{X}$ be an $m$-tuple of quasi-Banach function spaces, take $\vec{r} \in (0,\infty)^{m}$ and suppose $\vec{X} \in \HL_{\vec{r}}$. Then we have for $\vec{r}<\vec{p}\leq \infty$
  \begin{equation*}
    \sup_{\mc{D}\subseteq \ms{D} \text{ finite}}\,\nrmb{\widetilde{M}^{\mc{D}}_{\vec{r}}}_{L^{\vec{r}}(\R^d;\vec{X})  \to L^{r,\infty}(\R^d;X) } \lesssim_{\vec{p},\vec{r}} \nrm{\widetilde{M}_{\vec{r}}}_{\vec{p},X}
  \end{equation*}
\end{lemma}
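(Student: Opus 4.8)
The goal is a weak-type $(\vec{r}, r)$ endpoint bound for $\widetilde{M}_{\vec{r}}^{\mc{D}}$ in terms of the strong $(\vec{p}, p)$ bound that is assumed via $\vec{X} \in \HL_{\vec{r}}$. The plan is to run a Calder\'on--Zygmund-type stopping time argument adapted from \cite{HL17}, exploiting the multilinearity and the lattice structure.

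First I would fix $\lambda > 0$ and a tuple $\vec{f} \in L^{\vec{r}}(\R^d; \vec{X})$, and look at the level set $\{x : \widetilde{M}_{\vec{r}}^{\mc{D}}(\vec{f})(x) \in \lambda B_X\}^{\comp}$, or rather work with the scalar quantity $\|\widetilde{M}_{\vec{r}}^{\mc{D}}(\vec{f})\|_X$. The key point is to perform a stopping time decomposition on the dyadic grid: since $\mc{D} \subseteq \ms{D}$ is finite, one selects the maximal cubes $Q \in \mc{D}$ for which $\prod_{j=1}^m \ip{f_j}_{r_j, Q}$ is ``large'' in an appropriate lattice/averaged sense relative to $\lambda$. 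Because we only want a weak bound, these maximal stopping cubes $\mc{Q}_\lambda$ are pairwise disjoint and their union controls the super-level set. On the complement of $\bigcup \mc{Q}_\lambda$ the local averages are all controlled, and one localizes $\vec{f}$ to each stopping cube. A natural normalization: on each $Q \in \mc{Q}_\lambda$, rescale so that the averages $\ip{f_j}_{r_j, Q}$ are essentially of size comparable to $\lambda^{1/m}$ in a balanced way — here one uses that the product structure lets one redistribute a single scalar $\lambda$ among the $m$ factors (as in the proof of Proposition~\ref{prop:HLlintomulti} via H\"older, but now in reverse).

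The heart of the argument is then to bound, on each stopping cube, the contribution using the \emph{strong} $(\vec{p},p)$ bound $\nrm{\widetilde{M}_{\vec{r}}}_{\vec{p},X}$. Concretely, on $Q \in \mc{Q}_\lambda$ one replaces $f_j$ by its restriction $f_j \ind_Q$ (or by the cubes of $\mc{D}$ contained in $Q$), applies the strong-type inequality at exponent $\vec{p}$ to this localized piece, and then converts the resulting $L^p(Q;X)$ estimate into the desired weak-type estimate via Chebyshev together with the relation between the $\ip{\cdot}_{r_j,Q}$ averages at the stopping level and the $L^{p_j}$ norms on $Q$ — i.e. a reverse-H\"older-free interpolation: $\|f_j \ind_Q\|_{L^{p_j}(Q;X_j)} \leq |Q|^{1/p_j} \ip{f_j}_{r_j, Q}^{?}$ must be replaced by genuine $L^{r_j}$ data, so one should instead argue that the localized maximal function restricted below the stopping level is bounded pointwise by $\lambda$ times an indicator, and the part of $\widetilde{M}^{\mc{D}}_{\vec r}$ coming from cubes strictly inside $Q$ is handled by the strong bound after noting the averages there are $\lesssim$ the stopping value $\sim \lambda$. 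Summing $|Q|$ over the disjoint stopping cubes $\mc{Q}_\lambda$ gives $\sum_{Q \in \mc{Q}_\lambda} |Q| \lesssim \lambda^{-r} \prod_j \nrm{f_j}_{L^{r_j}(\R^d;X_j)}^{r}$ by the stopping condition, which is exactly the claimed weak-type bound $\nrm{\widetilde{M}^{\mc{D}}_{\vec r}(\vec f)}_{L^{r,\infty}(\R^d;X)}^r \lesssim_{\vec p, \vec r} \nrm{\widetilde{M}_{\vec r}}_{\vec p, X}^r \prod_j \nrm{f_j}_{L^{r_j}(\R^d;X_j)}^r$.

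The main obstacle I anticipate is the bookkeeping in the ``redistribution of $\lambda$'' step: unlike the scalar multisublinear maximal function, here the averages $\ip{f_j}_{r_j,Q}$ are lattice suprema of $X_j$-valued quantities, so ``$\prod_j \ip{f_j}_{r_j,Q}$ is large'' is an inequality in $X$, not in $\R$, and one must choose the stopping cubes and the splitting exponents so that (a) the maximal cubes are still disjoint, and (b) on each stopping cube the $m$ rescaled factors land at a scale where the strong $L^{\vec p} \to L^p$ bound can be applied efficiently — this requires choosing, for each stopping cube, a factorization of the scalar threshold compatible with the $X_j$-norms of the local pieces, which is where the hypotheses $\vec r < \vec p$ and the $\vec r$-convexity of $\vec X$ (ensuring the averages $\ip{f_j}_{r_j,Q}$ are well-defined Bochner-type averages in $X_j$) are essential. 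Following \cite{HL17} closely, I would expect this to go through with the three-lattice lemma reducing to dyadic $\mc D$, as already set up in the statement.
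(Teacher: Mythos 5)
There is a genuine gap, and it sits exactly at the point you flag as ``the main obstacle'' but then wave through. Your plan is to select maximal cubes on which the product of the \emph{lattice-valued} averages is large, claim that their union controls the super-level set of $\|\widetilde{M}^{\mc{D}}_{\vec{r}}(\vec{f}\hspace{2pt})\|_X$, and treat the contribution of cubes with individually small averages as ``pointwise bounded by $\lambda$ times an indicator.'' Both claims fail for lattice maximal operators: a lattice supremum of many elements, each of $X$-norm at most $\lambda$, can have norm far larger than $\lambda$ (already in $X=\ell^2$ the supremum of $n$ disjointly supported unit vectors has norm $n^{1/2}$). So the level set of $\|\widetilde{M}^{\mc{D}}_{\vec{r}}(\vec{f}\hspace{2pt})\|_X$ is not covered by cubes on which $\|\prod_j\ip{f_j}_{r_j,Q}\|_X$ is large, and the ``below the stopping level'' part cannot be discarded pointwise. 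If this reduction were valid, the weak bound would follow from scalar considerations alone and the hypothesis $\vec{X}\in\HL_{\vec{r}}$ would be superfluous, which is false (e.g. $\ell^\infty(\ell^2)\notin\HL$). Two further steps also do not work as stated: maximality of the stopping cubes controls averages of cubes \emph{not contained} in them, not of cubes strictly inside, so ``the averages there are $\lesssim$ the stopping value'' is wrong; and applying the strong $(\vec{p},p)$ bound to $f_j\ind_Q$ requires $\|f_j\ind_Q\|_{L^{p_j}(\R^d;X_j)}$, which is not controlled by $r_j$-average data on $Q$.

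The missing idea in the paper's proof is to decompose the \emph{functions} rather than cover the level set. The stopping is done scalarly and separately in each component: $\mc{S}_j$ consists of the maximal dyadic cubes with $\ip{\nrm{f_j}_{X_j}}_{r_j,Q}>\lambda^{r/r_j}$ (this fixed exponent split also disposes of your ``redistribution of $\lambda$'' problem once and for all, with no per-cube factorization needed). One then sets $g_j:=f_j\ind_{O_j^{\comp}}+\sum_{Q\in\mc{S}_j}\ip{f_j}_{r_j,Q}\ind_Q$ with $O_j=\bigcup_{Q\in\mc{S}_j}Q$, so that $\widetilde{M}^{\mc{D}}_{\vec{r}}(\vec{f}\hspace{2pt})\leq \widetilde{M}^{\mc{D}}_{\vec{r}}(\vec{g}\hspace{2pt})+b$ with $b$ supported in $\bigcup_j O_j$, a set of measure $\lesssim m\lambda^{-r}$ by the scalar weak $(r_j,r_j)$ bounds for $M^{\ms{D}}_{r_j}$. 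The crucial gain is that $g_j$ keeps the same $L^{r_j}(\R^d;X_j)$ norm while satisfying $\nrm{g_j}_{L^\infty(\R^d;X_j)}\lesssim_{r_j}\lambda^{r/r_j}$ (Lebesgue differentiation off $O_j$, and $r_j$-convexity plus maximality via the dyadic parent on $O_j$), so the strong $L^{\vec{p}}\to L^p$ bound $\nrm{\widetilde{M}_{\vec{r}}}_{\vec{p},X}$ can be applied \emph{globally} to $\vec{g}$ and combined with Chebyshev to give the $\lambda^{-r}$ decay. Your sketch never produces a good part with $L^{\vec{p}}$ control, which is what the hypothesis $\vec{r}<\vec{p}$ is actually for; without that, the argument does not close.
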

\begin{proof}
Fix $\mc{D}\subseteq \ms{D}$ finite and take $\vec{f} \in L^{\vec{r}}(\R^d;\vec{X})$. By scaling we may assume that $\nrm{f_j}_{L^{r_j}(\R^d;X_j)}=1$ for $1\leq j\leq m$. For $\lambda >0$ and $1\leq j\leq m$ define
\begin{equation*}
  \mc{S}_j := \cbraceb{Q \in \ms{D}: Q \text{ maximal (w.r.t inclusion) such that }\ipb{\nrm{f_j}_{X_j}}_{r_j,Q} >\lambda^{r/r_j}}
\end{equation*}
and set $O_j := \bigcup_{Q \in \mc{S}_j}Q=\cbrace{M_{r_j}^{\ms{D}}(\nrm{f_j}_{X_j})>\lambda^{r/r_j}}$. For a fixed $P \in \mc{D}$ and $1\leq j\leq m$ we have if $P\setminus O_j\neq \emptyset$, then
\begin{align*}
  \ip{f_j}_{r_j,P}\ind_P &=  \ips{f_j \ind_{O^{\comp}_j} +\sum_{\substack{Q \in \mc{S}_j:\\Q\subseteq P}}f_j\ind_Q}_{r_j,P} \ind_P \\
  &= \ips{f_j \ind_{O^{\comp}_j}+\sum_{Q \in \mc{S}_j}\ip{f_j}_{r_j,Q}\ind_{Q}}_{r_j,P}\ind_P
\end{align*}
using the disjointness of the cubes in $\mc{S}_j$ and
 $$\ipb{\ip{f_j}_{r_j,Q}\ind_Q}_{r_j,P} = \ip{f_j\ind_Q}_{r_j,P}, \qquad Q\subseteq P$$
  in the second equality. Taking the product over $1\leq j \leq m$ and the supremum over $P \in \mc{D}$ we can estimate
  \begin{align*}
    \widetilde{M}^{\mc{D}}_{\vec{r}}(\vec{f}\hspace{2pt}) &\leq \sup_{P\in \mc{D}} \prod_{j=1}^m\has{ \ips{f_j \ind_{O^{\comp}_j}+\sum_{Q \in \mc{S}_j}\ip{f_j}_{r_j,Q}\ind_{Q}}_{r_j,P}\ind_P + \ip{f_j}_{r_j,P} \ind_{O_j}}\\
  &\leq \widetilde{M}^{\mc{D}}_{\vec{r}}(\vec{g}\hspace{2pt}) + b
\end{align*}
where
\begin{equation*}
  g_j := g_j^1+g_j^2:= f_j \ind_{O^{\comp}_j} + \sum_{Q \in \mc{S}_j}\ip{f_j}_{r_j,Q}\ind_{Q}, \qquad 1\leq j \leq m.
\end{equation*}
and $b:\R^d \to X$ is the sum of all terms of the product over $1\leq j \leq m$ except $\widetilde{M}^{\mc{D}}_{\vec{r}}(\vec{g})$. By the disjointness of the cubes in $\mc{S}_j$ we have $\nrm{g_j}_{L^{r_j}(\R^d;X_j)} =\nrm{f_j}_{L^{r_j}(\R^d;X_j)} =1$. Moreover since
$$\supp b \subseteq \bigcup_{j=1}^m O_j = \bigcup_{j=1}^m \cbraceb{M_{r_j}^{\ms{D}}(\nrm{f_j}_{X_j})>\lambda^{r/r_j}}$$
and since $M_{r_j}^{\ms{D}}$ is weak $L^{r_j}$-bounded
we have
\begin{equation*}
  \absb{\nrm{b}_X>\lambda} \leq \sum_{j=1}^m \absb{M_{r_j}^{\ms{D}}\hab{\nrm{f_j}_{X_j}}>\lambda^{r/r_j}} \leq \frac{m}{\lambda^{r}}.
\end{equation*}
Next we estimate the $L^\infty$-norm of $\vec{g}$. For $1\leq j\leq m$ we have by the Lebesgue differentiation theorem that
\begin{equation*}
  \nrm{g_j^1}_{X_j} = \nrm{f_j}_{X_j} \ind_{O_j^{\comp}} \leq M_{r_j}^{\ms{D}}\hab{\nrm{f_j}_{X_j}}\ind_{O_j^{\comp}} \leq \lambda^{r/r_j}.
\end{equation*}
and, using the disjointness of the cubes in $\mc{S}_j$, the $r_j$-convexity of $X_j$, and the maximality of the cubes in $\mc{S}_j$, we have
\begin{equation*}
  \nrm{g_j^2}_{X_j} = \Big\|\Big(\sum_{Q \in \mc{S}_j}\ip{f_j}^{r_j}_{r_j,Q}\ind_{Q}\Big)^{\frac{1}{r_j}}\Big\|_{X_j}\leq 2^{d/r_j} \Big(\sum_{Q \in \mc{S}_j}\ip{\|f_j\|_{X_j}}^{r_j}_{r_j,\hat{Q}}\ind_{Q}\Big)^{\frac{1}{r_j}} \leq 2^{d/r_j} \lambda ^{r/r_j},
\end{equation*}
where $\hat{Q}$ is the dyadic parent of $Q \in \mc{S}_j$. Thus we have $\nrm{g_j}_{L^\infty(\R^d;X_j)} \lesssim_{r_j} \lambda^{r/r_j}$.

Combining the estimates for $\vec{g}$ and $b$ we obtain for $\vec{r} <\vec{p}< \infty$
\begin{align*}
  \abss{\nrmb{\widetilde{M}_{\vec{r}}^{\mc{D}} (\vec{f}\hspace{2pt})}_X > 2\lambda} &\leq \abss{\nrmb{\widetilde{M}_{\vec{r}}^{\mc{D}} (\vec{g})}_X > \lambda}+\absb{\nrm{b}_X > \lambda} \\&\leq \nrm{\widetilde{M}_{\vec{r}}}_{\vec{p},X} \, \frac{\prod_{j=1}^m\nrm{g_j}_{L^{p_j}(\R^d;X_j)}^{p}}{\lambda^{p}} + \frac{m}{\lambda^r} \\
  &\lesssim_{\vec{p},\vec{r}} \nrm{\widetilde{M}_{\vec{r}}}_{\vec{p},X}\, \frac{\prod_{j=1}^m\nrm{g_j}_{L^{r_j}(\R^d;X_j)}^{\frac{r_j}{p_j}p}\lambda^{(\frac{1}{r_j}-\frac{1}{p_j})pr}}{\lambda^{p}} + \frac{1}{\lambda^r}
  \leq\nrm{\widetilde{M}_{\vec{r}}}_{\vec{p},X}\, \frac{2}{\lambda^{r}}
\end{align*}
and the case where $p_j=\infty$ for some (or all) $1\leq j\leq m$ is similar.
Taking the supremum over $\vec{f}\in L^{\vec{r}}(\R^d;\vec{X})$ and all finite collections of cubes $\mc{D} \subseteq \ms{D}$ yields the conclusion.
\end{proof}

Using this lemma we now come to the main theorem of this section, which establishes sparse domination of the multisublinear lattice maximal operator. Recall that sparse domination is equivalent to domination by a multisublinear maximal operator, see Proposition~\ref{prop:ptwisetoform}, and recall Convention \ref{conv:prod} for the definition of $r$ and $X$.

\begin{theorem}\label{theorem:sparsedomHL}Let $\vec{X}$ be an $m$-tuple of quasi-Banach function spaces, take $\vec{r} \in (0,\infty)^{m}$ and $q \in [r,\infty)$. Suppose that $\vec{X} \in \HL_{\vec{r}}$ and that $X$ is an order-continuous $q$-convex quasi-Banach function space. Then for any $\vec{f} \in  L^{\vec{r}}_{\loc}(\R^d;\vec{X})$ and $g \in L^q_{\loc}(\R^d)$ we have
\begin{align*}
\nrmb{\nrm{\widetilde{M}_{\vec{r}}(\vec{f}\hspace{2pt})}_X\cdot g }_{L^q(\R^d)} &\lesssim_{\vec{X},\vec{r}}  \nrmb{M_{(\vec{r},q)}\hab{\nrm{\vec{f}}_{\vec{X}},g}}_{L^q(\R^d)},
\intertext{In particular, we have}
\nrmb{\widetilde{M}_{\vec{r}}(\vec{f}\hspace{2pt})}_{L^q(\R^d;X)} &\lesssim_{\vec{X},\vec{r}}  \nrmb{M_{\vec{r}}\hab{\nrm{\vec{f}}_{\vec{X}}}}_{L^q(\R^d)}.
\end{align*}
\end{theorem}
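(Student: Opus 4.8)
The plan is to reduce everything to a pointwise sparse bound for a single finite dyadic collection and then to extract the two displayed inequalities from it. By the three lattice lemma, $\widetilde{M}_{\vec{r}}(\vec{f}\hspace{2pt})\le 6^{d/r}\sup_{\alpha}\widetilde{M}_{\vec{r}}^{\ms{D}^{\alpha}}(\vec{f}\hspace{2pt})$ (lattice supremum over the $3^d$ shifted grids), and by monotone convergence — using the Fatou property of $X$ together with the $q$-convexity inequality $\nrm{\sup_i\xi_i}_X\le C_X\big(\sum_i\nrm{\xi_i}_X^q\big)^{1/q}$ for nonnegative $\xi_i\in X$ — it suffices to produce, for every finite $\mc{D}\subseteq\ms{D}^{\alpha}$, a sparse collection $\mc{S}$ (depending on $\vec{f}$ and $\mc{D}$) with
\[
\nrmb{\widetilde{M}_{\vec{r}}^{\mc{D}}(\vec{f}\hspace{2pt})(x)}_X\le C\has{\sum_{Q\in\mc{S}}\Big(\prod_{j=1}^m\ipb{\nrm{f_j}_{X_j}}_{r_j,Q}\Big)^q\ind_Q(x)}^{1/q}\qquad\text{for a.e.\ }x\in\R^d,
\]
with $C=C(\vec{X},\vec{r})$; indeed, multiplying by $\abs{g}^q$, integrating, and invoking the equivalence \eqref{eq:equivalencespmax} (equivalently Proposition~\ref{prop:sparsequiv} with $s=\infty$) yields the first estimate, and the case $g\equiv1$ yields the second. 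Splitting $\mc{D}$ along its finitely many maximal (hence disjoint) cubes and merging the resulting sparse families, we may assume all cubes of $\mc{D}$ lie below one top cube $Q_0$; enlarging $\mc{D}$ (which only enlarges $\widetilde{M}_{\vec{r}}^{\mc{D}}$), we may further assume $\mc{D}$ consists of all dyadic subcubes of $Q_0$ above some fixed scale.

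To produce $\mc{S}$ I would run a Calder\'on--Zygmund stopping time. Fix $\rho:=r/2$ and large constants $\kappa=\kappa(d,\vec{r})$ and $C_1=C_1(\vec{X},\vec{r})$, and write $\mc{D}_{\subseteq R}:=\{Q\in\mc{D}:Q\subseteq R\}$. Starting from $Q_0$, recursively declare the stopping children $\ch(R)$ of a selected cube $R$ to be the maximal $Q\in\mc{D}$ with $Q\subsetneq R$ for which
\[
\prod_{j=1}^m\ipb{\nrm{f_j}_{X_j}}_{r_j,Q}>\kappa\prod_{j=1}^m\ipb{\nrm{f_j}_{X_j}}_{r_j,R}\quad\text{or}\quad\ips{\nrmb{\widetilde{M}_{\vec{r}}^{\mc{D}_{\subseteq R}}(\vec{f}\ind_R)}_X^{\rho}}_{1,Q}>C_1^{\rho}\Big(\prod_{j=1}^m\ipb{\nrm{f_j}_{X_j}}_{r_j,R}\Big)^{\rho},
\]
and let $\mc{S}$ be the set of all selected cubes. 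The level set attached to the first condition has measure $\lesssim\kappa^{-r}\abs{R}$ by the weak $L^{\vec{r}}$--$L^{r,\infty}$ bound for the multisublinear maximal operator $M_{\vec{r}}$ (which holds with a dimensional constant since $[\vec{1}]_{\vec{r},(\vec{r},\infty)}=1$), and the one attached to the second has measure $\lesssim_{\vec{X},\vec{r}}C_1^{-\rho}\abs{R}$ by the weak bound $\nrm{\widetilde{M}_{\vec{r}}^{\mc{D}_{\subseteq R}}}_{L^{\vec{r}}(\R^d;\vec{X})\to L^{r,\infty}(\R^d;X)}\lesssim_{\vec{X},\vec{r}}1$ of Lemma~\ref{lemma:weakbiHL} combined with the embedding $L^{r,\infty}(R)\hookrightarrow L^{\rho}(R)$. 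Hence, for $\kappa$ and $C_1$ large, $\absb{\bigcup\ch(R)}\le\tfrac12\abs{R}$; the sets $E_R:=R\setminus\bigcup\ch(R)$ are pairwise disjoint with $\abs{E_R}\ge\tfrac12\abs{R}$, so $\mc{S}$ is sparse.

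For the pointwise bound, fix $x\in Q_0$ and let $R_0=Q_0\supsetneq R_1\supsetneq\dots\supsetneq R_N$ enumerate the cubes of $\mc{S}$ containing $x$, so $x\in E_{R_N}$. Grouping the cubes $Q\in\mc{D}$ with $x\in Q$ by the largest index $l$ with $R_l\supseteq Q$ gives the lattice identity
\[
\widetilde{M}_{\vec{r}}^{\mc{D}}(\vec{f}\hspace{2pt})(x)=\sup\Big(\widetilde{M}_{\vec{r}}^{\mc{D}_{\subseteq R_N}}(\vec{f}\ind_{R_N})(x),\ \mc{B}_0,\dots,\mc{B}_{N-1}\Big),\qquad \mc{B}_l:=\sup_{\substack{Q\in\mc{D}\\ R_{l+1}\subsetneq Q\subseteq R_l}}\prod_{j=1}^m\ip{f_j}_{r_j,Q},
\]
the suprema taken in $X$. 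Since $x\in E_{R_N}$, every dyadic $Q\ni x$ with $Q\subseteq R_N$ fails the second stopping condition, so Lebesgue differentiation gives $\nrmb{\widetilde{M}_{\vec{r}}^{\mc{D}_{\subseteq R_N}}(\vec{f}\ind_{R_N})(x)}_X\le C_1\prod_j\ipb{\nrm{f_j}_{X_j}}_{r_j,R_N}$. For $l<N$, every cube in the definition of $\mc{B}_l$ contains the dyadic parent $\hat{R}_{l+1}$ of $R_{l+1}$; by maximality of $R_{l+1}$ this parent is not a stopping cube (unless $\hat{R}_{l+1}=R_l$, in which case $\mc{B}_l=\prod_j\ip{f_j}_{r_j,R_l}$ outright), so it fails the second stopping condition, and since $\mc{B}_l\le\widetilde{M}_{\vec{r}}^{\mc{D}_{\subseteq R_l}}(\vec{f}\ind_{R_l})(y)$ in $X$ for every $y\in\hat{R}_{l+1}$, averaging the $\rho$-th powers of the $X$-norms over $\hat{R}_{l+1}$ yields $\nrm{\mc{B}_l}_X\le C_1\prod_j\ipb{\nrm{f_j}_{X_j}}_{r_j,R_l}$. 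Combining these $N+1$ estimates via $q$-convexity of $X$, and using $\{R_0,\dots,R_N\}=\{R\in\mc{S}:x\in R\}$, gives the pointwise sparse bound with $C=C_XC_1$.

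The main obstacle — and the reason a plain stopping time on the averages of $\nrm{\vec{f}}_{\vec{X}}$ does not work — is that the $X$-norm of a lattice supremum of averages of $\vec{f}$ is \emph{not} controlled by the maximum of the $X$-norms of those averages, so the terms $\mc{B}_l$ and $\widetilde{M}_{\vec{r}}^{\mc{D}_{\subseteq R_N}}(\vec{f}\ind_{R_N})$ cannot be handled cube by cube. The second stopping condition is built precisely to tame these lattice suprema; the small rescaling $\rho<r$ is needed so that $\nrmb{\widetilde{M}_{\vec{r}}^{\mc{D}_{\subseteq R}}(\vec{f}\ind_R)}_X^{\rho}$ is locally integrable (via Lemma~\ref{lemma:weakbiHL} and $L^{r,\infty}\hookrightarrow L^{\rho}$), after which the pointwise value is recovered either through the parent cube $\hat{R}_{l+1}$ (which escapes the stopping region) or through Lebesgue differentiation on $E_{R_N}$. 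This is the adaptation to $m>1$ and to the rescaled setting of the argument of H\"anninen and the first author in \cite{HL17}.
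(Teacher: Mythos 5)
Your overall scheme is the one the paper itself uses (both are adaptations of \cite{HL17}): the weak endpoint bound of Lemma~\ref{lemma:weakbiHL} drives a stopping-time construction on a finite dyadic family, sparsity of the stopping cubes comes from that weak bound, the blocks are summed using the $q$-convexity of $X$, and the passage to the full grid, to general cubes, and to the form estimate goes through the Fatou property, the three lattice lemma and Proposition~\ref{prop:sparsequiv}/\ref{prop:ptwisetoform}. The genuine difference is the stopping rule: the paper stops when the $X$-norm of the truncated ``tower'' maximal function $\sup_{Q'\subseteq P\subseteq Q}\prod_{j=1}^m\ip{f_j}_{r_j,P}$ exceeds $2^{1/r}A_0\prod_{j=1}^m\ipb{\nrm{f_j}_{X_j}}_{r_j,Q}$ (with $A_0$ the weak-type constant of Lemma~\ref{lemma:weakbiHL}), so the pointwise bound on each block is the immediate negation of the stopping condition; you instead stop on a local $L^\rho$-average of $\nrm{\widetilde{M}^{\mc{D}_{\subseteq R}}_{\vec{r}}(\vec{f}\ind_R)}_X$ (your first, scalar-average condition is in fact never used and can be dropped) and must then recover pointwise information through the parent cube $\hat{R}_{l+1}$ and a differentiation argument at the bottom of the chain. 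Both yield the same sparse bound; the paper's formulation is simply shorter because it avoids that recovery step.

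That recovery step is also where your write-up has a real flaw as stated. You reduced to $\mc{D}$ consisting of all dyadic subcubes of $Q_0$ above a fixed scale, and your stopping conditions are tested only on cubes $Q\in\mc{D}$. From $x\in E_{R_N}$ you may therefore only conclude that cubes of $\mc{D}$ (whose side lengths are bounded below) fail condition (b); the claim that \emph{every} dyadic $Q\ni x$ with $Q\subseteq R_N$ fails it is unjustified, and Lebesgue differentiation, which needs cubes shrinking to $x$, does not apply. The bound you want is still true and the repair is short: let $Q_x$ be the smallest cube of $\mc{D}$ containing $x$; if $Q_x\subsetneq R_N$, then every cube of $\mc{D}_{\subseteq R_N}$ meeting $Q_x$ contains it, so $\widetilde{M}^{\mc{D}_{\subseteq R_N}}_{\vec{r}}(\vec{f}\ind_{R_N})$ is constant on $Q_x$, and the failure of (b) on $Q_x$ (which does follow from $x\in E_{R_N}$) gives the pointwise bound at $x$; if instead $Q_x=R_N$, the block reduces to $\prod_{j=1}^m\ip{f_j}_{r_j,R_N}$, which is handled by the $r_j$-convexity of the $X_j$, exactly as in the paper's case $Q'=Q$ (the same remark is needed for your parenthetical case $\hat{R}_{l+1}=R_l$). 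Alternatively, test the stopping conditions on all dyadic subcubes of $R$, not just those in $\mc{D}$; then your differentiation argument works for a.e.\ $x$, once you note that by sparsity almost every point lies in only finitely many selected cubes. With this repair your argument is complete and gives the theorem with the same dependence on $\vec{X},\vec{r}$ as in the paper.
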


Note that $X$ is Theorem \ref{theorem:sparsedomHL} is automatically $r$-convex, since $X_j$ is $r_j$-convex for $1\leq j\leq m$. If $X$ is $q$-convex for $q>r$ we get a sparse domination result with a smaller sparse operator, which in turn yields better weighted bounds (see also Corollary \ref{corollary:weightedHL}).

\begin{proof}
We will first show that $\widetilde{M}_{\vec{r}}^{\mc{D}}$ is sparsely dominated for any finite collection of dyadic cubes $\mc{D}$ with an estimate independent of $\mc{D}$, from which we will deduce the claimed estimates for $\widetilde{M}_{\vec{r}}$.
Let $\vec{f} \in  L^{\vec{r}}_{\loc}(\R^d;\vec{X})$, fix a finite collection of dyadic cubes $\mc{D} \subseteq \ms{D}$ and set
\begin{equation*}
  A_0:= \sup_{\mc{D}\subseteq \ms{D} \text{ finite}}\,\nrmb{\widetilde{M}^{\mc{D}}_{\vec{r}}}_{L^{\vec{r}}(\R^d;\vec{X})  \to L^{r,\infty}(\R^d;X) },
\end{equation*}
which is finite by Lemma \ref{lemma:weakbiHL}.
For a cube $Q \in \mc{D}$, we define its stopping children $\ch_{\mc{D}}(Q)$ to be the collection of maximal cubes $Q' \in \mc{D}$ such that $Q' \subsetneq Q$ and
\begin{equation}\label{eq:stopping}
  \nrms{\sup_{\substack{P \in \mc{D} \\Q' \subseteq P\subseteq Q}}\prod_{j=1}^{m}\ip{f_j}_{r_j,P}}_{X}> 2^{1/r} A_0\,\prod_{j=1}^{m}\ipb{\nrm{f_j}_{X_j}}_{r_j,Q} .
\end{equation}
Let $\mc{S}^1$ be the maximal cubes in $\mc{D}$, define recursively $\mc{S}^{k+1}:= \bigcup_{Q \in \mc{S}^k} \ch_{\mc{D}}(Q)$ and set $\mc{S}: = \bigcup_{k=1}^\infty \mc{S}^k$.

Fix $Q \in \mc{S}$ and let $E_{Q}:=Q\setminus \bigcup_{Q'\in\ch_{\mc{D}}(Q)}Q'$. Define the set
  \begin{equation*}
    Q^* := \cbraces{x \in \R^d: \nrmb{\widetilde{M}_{\vec{r}}^{\mc{D}}\ha{ \vec{f}\ind_Q}(x) }_{X}  > 2^{1/r} A_0 \prod_{j=1}^{m}\ipb{\nrm{f_j}_{X_j}}_{r_j,Q}}.
  \end{equation*}
Then by the definition of $A_0$ we have
\begin{equation}\label{eq:temp1}
\abs{Q^*}^{1/r}\leq \frac{1}{2^{1/r}}\frac{\prod_{j=1}^{m}\nrm{f_j\ind_Q}_{L^{r_j}(\R^d;X_j)} }{\prod_{j=1}^{m}\ipb{\nrm{f_j}_{X_j}}_{r_j,Q}} =
\frac{1}{2^{1/r}} \abs{Q}^{1/r}.
\end{equation}
   Moreover, for $Q' \in \ch_{\mc{S}}(Q)$ and $x \in Q'$, we have  by \eqref{eq:stopping}
  \begin{align*}
    \nrmb{\widetilde{M}^{\mc{D}}_{\vec{r}}\ha{ \vec{f}\ind_Q}(x)}_{X}  &\geq \nrms{\sup_{\substack{P \in \mc{D}: \\ Q' \subseteq P\subseteq Q }}\prod_{j=1}^{m}\ip{f_j}_{r_j,P}}_{X}> 2^{1/r} A_0\,\prod_{j=1}^{m}\ipb{\nrm{f_j}_{X_j}}_{r_j,Q},
  \end{align*}
  so $x \in Q^*$ and thus $Q' \subseteq Q^*$. Using the disjointness of the cubes in $\ch_{\mc{D}}(Q)$ and \eqref{eq:temp1},  we get
  \begin{equation*}
    \sum_{Q' \in \ch_{\mc{S}}(Q)} |Q'| \leq |Q^*| \leq \frac{1}{2} |Q|.
  \end{equation*}
  So $|E_{Q}| \geq \frac{1}{2}|Q|$, which means that $\mc{S}$ is a sparse collection of dyadic cubes.

  Next, we check that $\widetilde{M}^{\mc{D}}_{\vec{r}}(\vec{f}\hspace{2pt})$ is pointwise dominated by the sparse operator associated to $\mc{S}$. For each $P \in \mc{D}$ we define
\begin{equation*}\parent_{\mc{S}}(P) \\
:= \cbrace{Q \in \mc{S}:Q \text{ minimal such that } P \subseteq Q},
\end{equation*}
which allows us to partition $\mc{D}$ as follows
\begin{equation*}
  \mc{D} = \bigcup_{Q \in \mc{S}} \cbraceb{P \in \mc{D}: \parent_\mc{S}(P)=Q}
\end{equation*}
    Fix $Q \in \mc{S}$, $x \in Q$ and let $Q' \in \mc{D}$ be the minimal cube such that $x \in Q'$ and $\parent_\mc{S}(Q')=Q$. If $Q' \subsetneq Q$ we have by the definition of $Q'$  that
\begin{align*}
  \nrms{\sup_{\substack{P \in \mc{D}:\\\parent_{\mc{S}}(P)=Q}}\prod_{j=1}^{m}\ip{f_j}_{r_j,P}\ind_{P}(x)}_{X}&=\nrms{\sup_{\substack{P \in \mc{D}: \\Q' \subseteq P\subseteq Q}} \prod_{j=1}^{m}\ip{f_j}_{r_j,P}}_{X}\\
  &\leq  2^{1/r} A_0 \prod_{j=1}^{m} \ipb{\nrm{f_j}_{X_j}}_{r_j,Q}  \ind_Q(x).
\end{align*}
If $Q'=Q$ the same estimate follows directly from the $r_j$-convexity of $X_j$ for $1\leq j\leq m$.
Using the fact that $\nrm{\cdot}_{\ell^\infty} \leq \nrm{\cdot}_{\ell^q}$ and the $q$-convexity of $X$ we can conclude for any $x \in \R^d$
  \begin{align*}
    \nrmb{\widetilde{M}^{\mc{D}}_{\vec{r}}(\vec{f}\hspace{2pt})(x)}_{X}&= \nrms{\sup_{Q \in \mc{S}} \sup_{\substack{P \in \mc{D}:\\\parent_{\mc{S}}(P)=Q}}\prod_{j=1}^{m}\ip{f_j}_{r_j,P}\ind_{P}(x)}_{X}\\
    &\leq  \has{\sum_{Q \in \mc{S}}\nrms{{ {\sup_{\substack{P \in \mc{D}:\\\parent_{\mc{S}}(P)=Q}}\prod_{j=1}^{m}\ip{f_j}_{r_j,P}\ind_{P}(x)}}}_{X}^q}^{1/q} \\
    &\leq  2^{1/r} A_0 \has{\sum_{Q \in \mc{S}}\prod_{j=1}^{m} \ipb{\nrm{f_j}_{X_j}}_{r_j,Q}^q  \ind_Q(x)}^{1/q}.
  \end{align*}
which is a pointwise sparse domination result for $\widetilde{M}^{\mc{D}}_{\vec{r}}$. Using the Fatou property of $X$ we know that
\begin{equation*}
  \nrmb{\widetilde{M}^{\ms{D}}_{\vec{r}}(\vec{f}\hspace{2pt})(x)}_X \leq \sup_{\mc{D}\subseteq \ms{D}:\mc{D} \text{ finite}}\nrmb{\widetilde{M}^{\mc{D}}_{\vec{r}}(\vec{f}\hspace{2pt})(x)}_X, \qquad x \in \R^d,
\end{equation*}
so our claim for $\widetilde{M}^{\ms{D}}_{\vec{r}}$ instead of $\widetilde{M}_{\vec{r}}$ follows from Proposition~\ref{prop:ptwisetoform}. Our claim for $\widetilde{M}_{\vec{r}}$ then follows from the three lattice lemma. The final statement follows by taking $g=\ind_{\R^d}$.
\end{proof}

\begin{remark}
Note that the proof of Theorem \ref{theorem:sparsedomHL} actually proves a pointwise sparse domination result for $\widetilde{M}_{\vec{r}}^{\mc{D}}$. Indeed, under the assumptions of Theorem \ref{theorem:sparsedomHL} we have that for any finite collection of dyadic cubes $\mc{D}\subseteq \ms{D}$ and $\vec{f} \in L^{\vec{r}}_{\loc}(\R^d;\vec{X})$ there exists a sparse collection of cubes $\mc{S} \subseteq \mc{D}$ such that
  \begin{equation*}
\nrmb{\widetilde{M}_{\vec{r}}^{\mc{D}}(\vec{f}\hspace{2pt})(x)}_{X} \lesssim_{X,r}  \has{\sum_{Q \in \mc{S}}\prod_{j=1}^{m}\ipb{\nrm{f_j}_{X_j}}_{r_j,Q}^q \ind_Q(x)}^{1/q} \qquad x\in \R^d.
\end{equation*}
\end{remark}

Using Proposition \ref{prop:wfromsdscalar}, Theorem \ref{theorem:sparsedomHL} and the density of simple functions we can now directly conclude weighted estimates for $\widetilde{M}_{\vec{r}}$. In particular this proves the $p$-independence of the $\vec{r}$-Hardy--Littlewood property.

\begin{corollary}\label{corollary:weightedHL}
Let $\vec{X}$ be an $m$-tuple of quasi-Banach function spaces, take $\vec{r} \in (0,\infty)^{m}$ and $q \in [r,\infty)$. Suppose $\vec{X} \in \HL_{\vec{r}}$ and assume $X$ is an order-continuous $q$-convex quasi-Banach function space. Then for $\vec{p} \in (0,\infty]^m$ with $\vec{r}<\vec{p}$ and $p<\infty$ and  any $w \in A_{\vec{p},(\vec{r},\infty)}$
\begin{equation*}
  \nrmb{\widetilde{M}_{\vec{r}}}_{L^{\vec{p}}_{\vec{w}}(\R^d;\vec{X})\to L_w^p(\R^d;X)}\lesssim_{\vec{X},\vec{p},q,\vec{r}} [\vec{w}]_{\vec{p},(\vec{r},\infty)}^{ \max\cbraceb{\frac{\frac{1}{\vec{r}}}{\frac{1}{\vec{r}}- \frac{1}{\vec{p}}},\frac{p}{q}}}.
\end{equation*}
\end{corollary}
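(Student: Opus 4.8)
The plan is to read Corollary~\ref{corollary:weightedHL} off from Proposition~\ref{prop:wfromsdscalar} applied to $\widetilde T=\widetilde M_{\vec r}$ with the choice $s=\infty$, the hypothesis of that proposition being supplied by the sparse domination of Theorem~\ref{theorem:sparsedomHL}. Concretely, under the standing assumptions $\vec X\in\HL_{\vec r}$, $q\in[r,\infty)$, and $X$ an order-continuous $q$-convex quasi-Banach function space, Theorem~\ref{theorem:sparsedomHL} gives, for all $\vec f\in L^{\vec r}_{\loc}(\R^d;\vec X)$ and $g\in L^q_{\loc}(\R^d)$,
\[
\nrmb{\nrm{\widetilde M_{\vec r}(\vec f\hspace{2pt})}_X\cdot g}_{L^q(\R^d)}\lesssim_{\vec X,\vec r}\nrmb{M_{(\vec r,q)}\hab{\nrm{\vec f}_{\vec X},g}}_{L^q(\R^d)}.
\]
Since $\bigl(\tfrac1q-\tfrac1s\bigr)^{-1}=q$ for $s=\infty$, this is exactly inequality~\eqref{eq:scalarwpropin} with $s=\infty$ and with an admissible constant $C_T=C_{\vec X,\vec r}$; in particular it holds for all simple $\vec f\in L^\infty_c(\R^d;\vec X)$ and all $g\in L^\infty_c(\R^d)$, as Proposition~\ref{prop:wfromsdscalar} requires as input.

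Applying Proposition~\ref{prop:wfromsdscalar} with this $s=\infty$ to $\widetilde T=\widetilde M_{\vec r}$, which is positive-valued and $m$-sublinear, then yields for every $\vec p\in(0,\infty]^m$ with $\vec r<\vec p$ and $p<\infty$ (the condition $p<s$ when $s=\infty$) and every $\vec w\in A_{\vec p,(\vec r,\infty)}$ a bounded operator satisfying
\[
\|\widetilde M_{\vec r}(\vec f\hspace{2pt})\|_{L^p_w(\R^d;X)}\lesssim_{\vec X,\vec p,q,\vec r}[\vec w]_{\vec p,(\vec r,\infty)}^{\max\left\{\frac{1/\vec r}{1/\vec r-1/\vec p},\;\frac{1/q-1/s}{1/p-1/s}\right\}}\prod_{j=1}^m\|f_j\|_{L^{p_j}_{w_j}(\R^d;X_j)}.
\]
Substituting $s=\infty$ collapses the second exponent $\tfrac{1/q-1/s}{1/p-1/s}$ to $\tfrac{1/q}{1/p}=\tfrac pq$, which is exactly the exponent appearing in the corollary; the implicit constant then depends only on $\vec X,\vec p,q,\vec r$ (the constant $C_T=C_{\vec X,\vec r}$ having been absorbed), and by homogeneity the displayed estimate is the asserted operator-norm bound.

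The one genuine, if modest, point requiring care is the identification of the \emph{extension} produced by Proposition~\ref{prop:wfromsdscalar} — obtained through Lemma~\ref{lem:extlem} from the dense (by Lemma~\ref{lem:dommultdense}) subspace of simple functions in $L^{\vec p}_{\vec w}(\R^d;\vec X)$ — with $\widetilde M_{\vec r}$ as it was originally defined, namely as a lattice supremum. The latter already makes sense on all of $L^{\vec p}_{\vec w}(\R^d;\vec X)$ by order-continuity of $X$, using that $\vec r<\vec p$ together with $\vec w\in A_{\vec p,(\vec r,\infty)}$ forces $L^{\vec p}_{\vec w}(\R^d;\vec X)\hookrightarrow L^{\vec r}_{\loc}(\R^d;\vec X)$ so that $\widetilde M_{\vec r}$ is defined there. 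One then checks that $\widetilde M_{\vec r}$ obeys the $m$-sublinear approximation estimate underlying Lemma~\ref{lem:extlem} and agrees with the abstract extension on that dense subspace, so the uniqueness clause forces the two to coincide; this is really the only step not already packaged in the two cited results.

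Finally, the asserted $p$-independence of the $\vec r$-Hardy--Littlewood property follows by specializing the bound above to $\vec w\equiv1$: it shows $\widetilde M_{\vec r}\colon L^{\vec p}(\R^d;\vec X)\to L^p(\R^d;X)$ is bounded for every $\vec p$ with $\vec r<\vec p$ and $p<\infty$, and since $\widetilde M_{\vec r}^{\mc D}\leq\widetilde M_{\vec r}$ pointwise (in the lattice sense) for each finite $\mc D$, the defining quantity $\nrm{\widetilde M_{\vec r}}_{\vec p,\vec X}$ is finite for all such $\vec p$ as soon as it is finite for one.
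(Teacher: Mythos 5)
Your proposal is correct and follows exactly the paper's route: the paper also deduces the corollary by feeding the sparse domination of Theorem~\ref{theorem:sparsedomHL} into Proposition~\ref{prop:wfromsdscalar} with $s=\infty$ (so that $\frac{1/q-1/s}{1/p-1/s}=\frac{p}{q}$), using the density of simple functions. Your additional remarks on identifying the abstract extension with the lattice-defined $\widetilde M_{\vec r}$ and on the resulting $p$-independence of $\HL_{\vec r}$ are exactly the points the paper leaves implicit.
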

We point out that the condition $p<\infty$ here is necessary. Indeed, it is shown in \cite[Remark 2.9]{GMT93} that $\widetilde{M}$ is not bounded on $L^\infty(\R;\ell^2)$.

\begin{remark}\label{rem:sht}
  The arguments presented in this section also go through in a space of homogeneous type instead of $\R^d$ with the Lebesgue measure, provided one uses the dyadic cubes from e.g. \cite{HK12}. Furthermore, for the dyadic counterpart of $\widetilde{M}_{\vec{r}}$ one can also work on $\R^d$ with any locally finite measure $\mu$, see \cite{HL17} for details.
\end{remark}

\section{A limited range multilinear \texorpdfstring{$\UMD$}{UMD} property for quasi-Banach function spaces}\label{section:multiUMD}
 A Banach space has the $\UMD$ property if the martingale difference sequence of any finite martingale in $L^p(\Omega;X)$ is unconditional for some (equivalently all) $p \in (1,\infty)$, i.e. if for $(f_k)_{k=0}^n$ any finite martingale in $L^p(\Omega;X)$ for some (equivalently all) $p \in (1,\infty)$ and a probability space $(\Omega,\P)$ and all scalars $\abs{\epsilon_0}=\abs{\epsilon_n}=1$ we have
 \begin{equation}\label{eq:UMD}
   \nrms{\sum_{k=1}^n \epsilon_k df_n}_{L^p(\Omega;X)} \lesssim \nrms{\sum_{k=1}^n df_n}_{L^p(\Omega;X)},
 \end{equation}
 where $(df_k)_{k=1}^n$ is the difference sequence of $(f_k)_{k=0}^n$. The least admissible constant in \eqref{eq:UMD} is denoted by $\beta_{p,X}$.
The class of $\UMD$ Banach function spaces includes for example all reflexive Lebesgue, Lorentz and Musielak--Orlicz spaces. As the $\UMD$ property implies reflexivity, $L^1$ and $L^\infty$ do not have the $\UMD$ property. For an introduction to the $\UMD$ property we refer the reader to \cite{HNVW16,Pi16}.

As already noted in the previous section, for Banach function spaces the $\UMD$ property is intimately connected to the Hardy--Littlewood property. As shown by Bourgain \cite{Bo84} and Rubio de Francia \cite[Theorem 3]{Ru86}, a Banach function space $X$ has the $\UMD$ property if and only if both $X$ and $X^*$ have the Hardy--Littlewood property. This connection between the Hardy--Littlewood property and the $\UMD$ property is made quantitative in \cite{KLW20}, where it is shown that $\nrm{\widetilde{M}}_{p,X} \lesssim (\beta_{p,X})^2$.

Motivated by this connection between the Hardy--Littlewood property and the $\UMD$ property and using the extension of the Hardy--Littlewood property to the rescaled, multilinear setting from Section \ref{section:multiHL}, we will now define a limited range, multilinear version of the $\UMD$ property for $m$-tuples of quasi-Banach function spaces.

\begin{definition}
  Let $\vec{X}$ be an $m$-tuple of quasi-Banach function spaces, take $\vec{r} \in (0,\infty)^m$ and $s \in (r,\infty]$. We say that $\vec{X}$ has the $(\vec{r},s)$-$\UMD$ property and write $\vec{X} \in \UMD_{\vec{r},s}$ if $X = \prod_{j=1}^m X_j$ is an order-continuous Banach function space and $(\vec{X},X^*) \in \HL_{(\vec{r},s')}$.
\end{definition}

Note that while the $\UMD$ property is well-defined in terms of martingale difference sequences for any Banach space, our limited range multilinear version is only given for quasi-Banach \textit{function} spaces and has no immediate connection to martingales. It would be interesting to have an equivalent characterization of either the limited range or the multilinear generalization (for example in terms of martingale difference sequences) that does not use the lattice structure of $\vec{X}$.

\begin{remark}
 In the linear case the class $\UMD_{2,\infty}$ has already been implicitly used in \cite{PSX12} in connection to the so-called \emph{Littlewood--Paley--Rubio de Francia property} for Banach function spaces. Indeed, although \cite[Theorem 3.1]{PSX12} assumes $X^2$ to have the $\UMD$ property, the proof only uses $X^2,X^* \in \HL$, which by Proposition \ref{prop:hlcharofumd} is equivalent to $X\in\UMD_{2,\infty}$.
\end{remark}

 \bigskip

 As a first result on the limited range multilinear $\UMD$ property we will show that
our nomenclature makes sense, i.e. that the $\UMD_{\vec{r},s}$ property is actually related to the $\UMD$ property for Banach function spaces. If $X$ is a Banach function space, then $X$ has the $\UMD$ property if and only if $X \in \UMD_{1,\infty}$. This follows directly from the result of Bourgain and Rubio de Francia and the case $m=r=s'=1$, of the following proposition.

\begin{proposition}\label{prop:hlcharofumd}
Let $\vec{X}$ be an $m$-tuple of quasi-Banach function spaces and let $\vec{r}\in[1,\infty)^m$ and $s\in(1,\infty]$. The following are equivalent:
\begin{enumerate}[(i)]
\item\label{it:hlchar1}$\vec{X}\in\UMD_{\vec{r},s}$;
\item\label{it:hlchar2}  $\vec{X}\in\HL_{\vec{r}}$ and $(X_1,\ldots, X_{j-1},X_{j+1},\ldots,X_m,X^\ast)\in\HL_{(r_1,\ldots,r_{j-1},r_{j+1},\ldots r_m,s')}$ for all $j\in\{1,\ldots,m\}$.
\end{enumerate}
\end{proposition}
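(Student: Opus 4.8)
Write $Z_0:=X^\ast$ and $Z_k:=X_k$, $t_0:=s'$ and $t_k:=r_k$ for $1\le k\le m$, so that $\prod_{k=0}^m Z_k=X^\ast\cdot X=L^1(\Omega)$, using that $X^\ast$ is the K\"othe dual of the order-continuous Banach function space $X$. Unwinding Convention~\ref{conv:prod} and the definitions, condition \ref{it:hlchar1} says precisely that the rescaled $(m{+}1)$-sublinear lattice maximal operator $\widetilde{M}_{\vec{t}}^{\mc{D}}$ of the tuple $(Z_0,\dots,Z_m)$ is bounded from $\prod_{k=0}^m L^{p_k}(\R^d;Z_k)$ into $L^{\tilde p}(\R^d;L^1(\Omega))$ uniformly over finite collections $\mc{D}$ (for some, equivalently any, admissible exponents); and, since the $\HL$ property of a tuple is invariant under permuting its entries and \ref{it:hlchar2} imposes no condition on any individual space, \ref{it:hlchar2} says precisely that \emph{each} of the $m{+}1$ subtuples obtained by deleting one entry of $(Z_0,\dots,Z_m)$ has the $\HL$ property, with the inherited exponents and with its (automatically Banach function space) product as target. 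So the claim is that the $(m{+}1)$-tuple has the $\HL$ property exactly when all of its $m$-element subtuples do.

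For \ref{it:hlchar1}$\Rightarrow$\ref{it:hlchar2} I would deduce each subtuple bound by \emph{freezing} one entry of the full estimate; by symmetry it suffices to produce $\vec{X}\in\HL_{\vec{r}}$. Since $\HL$ is independent of the exponents, take the exponent in the $Z_0=X^\ast$ slot to be $\infty$ and test with $g(x,\omega)=\eta(\omega)$ independent of $x$ with $\nrm{\eta}_{X^\ast}\le1$: then $\langle g\rangle_{s',Q}=|\eta|$ for every $Q$, so $\widetilde{M}_{\vec{t}}^{\mc{D}}(\vec{f},g)=\widetilde{M}_{\vec{r}}^{\mc{D}}(\vec{f}\hspace{2pt})\,|\eta|$ and hence $\nrmb{\widetilde{M}_{\vec{r}}^{\mc{D}}(\vec{f}\hspace{2pt})\,|\eta|}_{L^p(\R^d;L^1(\Omega))}\lesssim\prod_{j=1}^m\nrm{f_j}_{L^{p_j}(\R^d;X_j)}$. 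Taking the supremum over such $\eta$ and using the norming duality $\nrm{\xi}_X=\sup_{\nrm{\eta}_{X^\ast}\le1}\int_\Omega\xi\eta\dd\mu$, valid by the Fatou property, recovers $\nrmb{\widetilde{M}_{\vec{r}}^{\mc{D}}(\vec{f}\hspace{2pt})}_{L^p(\R^d;X)}$. The one subtlety is that $\widetilde{M}_{\vec{r}}^{\mc{D}}$ averages $g$ over cubes rather than sampling it, so a single $\eta$ does not realise the pointwise $X$-norm; this is dealt with by first reducing, via replacing $\vec{f}$ by functions constant on the minimal cubes of $\mc{D}$ and exhausting by finer and finer finite collections, to a configuration in which the averaging of $g$ is harmless, and then letting the near-optimal $\eta$ vary measurably from cell to cell. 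The remaining $m$ subtuple estimates follow in the same way after permuting the tuple.

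The converse \ref{it:hlchar2}$\Rightarrow$\ref{it:hlchar1} is, I expect, the main obstacle. The plan is to pass to a sparse domination of $\widetilde{M}_{\vec{t}}^{\mc{D}}$ and run the stopping-time scheme of Lemma~\ref{lemma:weakbiHL} and Theorem~\ref{theorem:sparsedomHL} for the $(m{+}1)$-tuple: that scheme needs only the \emph{weak endpoint} bound $\widetilde{M}_{\vec{t}}^{\mc{D}}\colon\prod_k L^{t_k}(\R^d;Z_k)\to L^{\tilde t,\infty}(\R^d;L^1(\Omega))$ uniformly in $\mc{D}$, and from it produces a sparse $\mc{S}$ with $\nrmb{\widetilde{M}_{\vec{t}}^{\mc{D}}(\vec{z}\hspace{2pt})(x,\cdot)}_{L^1(\Omega)}\lesssim\sum_{Q\in\mc{S}}\prod_k\langle\nrm{z_k}_{Z_k}\rangle_{t_k,Q}\ind_Q(x)$; the right-hand side is then bounded in $L^{\tilde p}(\R^d)$ by $\prod_k\nrm{z_k}_{L^{p_k}(\R^d;Z_k)}$ for $\vec{t}<\vec{p}$ by the (unweighted) boundedness of the scalar sparse operator, and that is \ref{it:hlchar1} (for the $L^1(\Omega)$-H\"older step one uses $X\cdot X^\ast=L^1(\Omega)$ and the $t_k$-convexities). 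So everything reduces to the $(m{+}1)$-tuple weak endpoint. For $m=1$ this is immediate from $\widetilde{M}_{(t_0,t_1)}^{\mc{D}}(z_0,z_1)\le\widetilde{M}_{t_0}^{\mc{D}}(z_0)\,\widetilde{M}_{t_1}^{\mc{D}}(z_1)$ together with the two single-space weak endpoints furnished by \ref{it:hlchar2} (namely $X\in\HL_r$ and $X^\ast\in\HL_{s'}$, through Lemma~\ref{lemma:weakbiHL}), the H\"older inequality in $L^1(\Omega)$, and the weak-type H\"older inequality in $\R^d$. For $m\ge2$ no such product splitting works, since it always isolates a maximal operator on a single space whose $\HL$ property is not assumed, and one cannot repair it by freezing an $L^\infty$-truncated component as a multiplier, because the supremum over $\R^d$ of such a component need not have finite norm in the ambient space; instead one carries out a Calder\'on--Zygmund decomposition of $\vec{z}$ at each level $\lambda$ exactly as in Lemma~\ref{lemma:weakbiHL} and estimates the resulting good function by routing its truncated pieces through one of the $m$-element subtuples, using the \emph{strong} $\HL$ bound granted for that subtuple by \ref{it:hlchar2}, with the $\lambda$-power accounting identical to that in Lemma~\ref{lemma:weakbiHL}. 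Making this good-function estimate precise while keeping the product structure $X\cdot X^\ast=L^1(\Omega)$ intact is the heart of the argument.
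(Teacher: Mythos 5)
Your reduction of \ref{it:hlchar2}$\Rightarrow$\ref{it:hlchar1} is where the genuine gap lies, and you identify it yourself without closing it. You want a weak endpoint for the full $(m{+}1)$-tuple via a Calder\'on--Zygmund decomposition as in Lemma~\ref{lemma:weakbiHL}, but in that lemma the good part is controlled by the \emph{strong} bound for the full tuple at some $\vec{p}>\vec{r}$ -- precisely the quantity you are trying to establish. Your substitute, ``routing the truncated pieces through one of the $m$-element subtuples,'' requires removing one component of the good function from inside the lattice supremum; as you yourself note, the pointwise-in-$x$ bound $\|g_j(x,\cdot)\|_{X_j}\lesssim\lambda^{r/r_j}$ does not produce a single majorant in $X_j$ for the averages $\langle g_j(\cdot,\omega)\rangle_{r_j,Q}$ uniformly in $Q$ unless $X_j\in\HL_{r_j}$, which is not assumed. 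So the ``heart of the argument'' is exactly what is missing. Moreover, your assertion that ``for $m\ge2$ no product splitting works'' is what sends you down this road, and it is incorrect: the splitting that works is not into a single-space factor but into the geometric mean of the $m{+}1$ deleted-tuple maximal operators. Setting $r_{m+1}:=s'$, $X_{m+1}:=X^\ast$, one has for every cube $Q$ the identity $\prod_{j=1}^{m+1}\langle f_j\rangle_{r_j,Q}=\prod_{j=1}^{m+1}\bigl(\prod_{k\neq j}\langle f_k\rangle_{r_k,Q}\bigr)^{1/m}$, hence $\widetilde{M}^{\mc{D}}_{(\vec{r},s')}(\vec{f},g)\leq\prod_{j=1}^{m+1}\widetilde{M}^{\mc{D}}_{\vec{q}_j}(\vec{g}_j)^{1/m}$, where $\vec{q}_j,\vec{g}_j$ omit the $j$-th entry. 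Since each $X_k$ occurs in exactly $m$ of the deleted products, $\prod_{j=1}^{m+1}Y_j^{1/m}=\prod_{j=1}^{m+1}X_j=L^1(\Omega)$ with $Y_j=\prod_{k\neq j}X_k$, and H\"older's inequality in the mixed-norm spaces combined with the strong bounds for the deleted tuples granted by \ref{it:hlchar2} (together with Corollary~\ref{corollary:weightedHL} to adjust exponents) yields \ref{it:hlchar1} directly -- no weak endpoint, no stopping time, no sparse domination. This is how the paper argues, and it is a few lines.

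For \ref{it:hlchar1}$\Rightarrow$\ref{it:hlchar2} your duality strategy is sound in outline, but the patch you propose for the point-value-versus-average mismatch (taking $\vec{f}$ constant on the minimal cubes of $\mc{D}$ and letting $\eta$ vary from cell to cell) is unsubstantiated: once $\eta$ depends on the cell, $\langle g\rangle_{s',Q}$ is no longer $|\eta|$ on any non-minimal $Q$, and the cube attaining the supremum at $x$ may be much larger than the cell containing $x$, with an average of $g$ far below $g(x)$; making the cells finer does not help. The clean resolution, used in the paper, is to test against a general $g\in L^{p'}(\R^d;X^\ast)$ and, for each fixed $\omega$, invoke the scalar pointwise sparse domination of $M^{\mc{D}}_{\vec{r}}$ together with Proposition~\ref{prop:ptwisetoform} to obtain $\|M^{\mc{D}}_{\vec{r}}(\vec{f}(\cdot,\omega))\,g(\cdot,\omega)\|_{L^1(\R^d)}\lesssim\|M^{\mc{D}}_{(\vec{r},1)}(\vec{f}(\cdot,\omega),g(\cdot,\omega))\|_{L^1(\R^d)}$; then integrate in $\omega$, use $\widetilde{M}^{\mc{D}}_{(\vec{r},1)}\leq\widetilde{M}^{\mc{D}}_{(\vec{r},s')}$ (H\"older, $s'\geq1$), apply \ref{it:hlchar1} via Corollary~\ref{corollary:weightedHL}, and take the supremum over $\|g\|_{L^{p'}(\R^d;X^\ast)}=1$. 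You should replace your cell-by-cell reduction by this sparse-domination step or supply a genuine proof of it; as written, both directions of your proposal rest on unproved claims.
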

\begin{proof}
For \ref{it:hlchar1}$\Rightarrow$\ref{it:hlchar2} we only prove $\vec{X}\in\HL_{\vec{r}}$. The other results with $j\in\{1,\ldots,m\}$ follow from an analogous argument by interchanging the roles of $X^\ast$ and $X_j$ and the roles of $s'$ and $r_j$.

Let $(\Omega,\mu)$ denote the underlying  measure space over which the $\vec{X}$ are defined and fix $\vec{p}\in(0,\infty]^m$ with $\vec{r}<\vec{p}$, $1\leq p<s$ and a finite collection of cubes $\mc{D}$. By the pointwise sparse domination result for $M^{\mc{D}}_{\vec{r}}$, it follows from Proposition~\ref{prop:ptwisetoform} that $$\|M^\mc{D}_{\vec{r}}(\vec{f}\hspace{2pt})g\|_{L^1(\R^d)}\lesssim_{\vec{r}}\|M^\mc{D}_{(\vec{r},1)} (\vec{f},g)\|_{L^1(\R^d)}$$ for $\vec{f}\in L^{\vec{r}}_{\loc}(\R^d)$, $g\in L^1_{\loc}(\R^d)$. Since $\widetilde{M}^\mc{D}_{\vec{r}}(\vec{f})(x,\omega) =M^\mc{D}_{\vec{r}}(\vec{f}(\cdot,\omega))(x)$, combining this with Fubini's theorem  we obtain for $\vec{f} \in L^{\vec{p}}_c(\R^d;\vec{X})$ and $g \in L^{p'}(\R^d;X^*)$
\begin{align*}
\left|\int_{\R^d}\int_\Omega\!\widetilde{M}^\mc{D}_{\vec{r}}(\vec{f}\hspace{2pt})g\,\mathrm{d}\mu\,\mathrm{d}x\right|&\leq\int_\Omega\!\|M^\mc{D}_{\vec{r}}(\vec{f}(\cdot,\omega))g(\cdot,\omega)\|_{L^1(\R^d)}\,\mathrm{d}\mu(\omega)\\
&\lesssim_{\vec{r}}\int_\Omega\nrmb{M^\mc{D}_{(\vec{r},1)}(\vec{f}(\cdot, \omega),g(\cdot,\omega))}_{L^1(\R^d)}\,\mathrm{d}\mu(\omega)\\
&=\nrmb{\widetilde{M}^\mc{D}_{(\vec{r},1)}(\vec{f}\hspace{2pt},g)}_{L^1(\R^d;L^1(\Omega))} \leq\nrmb{\widetilde{M}^\mc{D}_{(\vec{r},s')}(\vec{f}\hspace{2pt},g)}_{L^1(\R^d;L^1(\Omega))}\\
&\leq\nrmb{\widetilde{M}_{(\vec{r},s')}}_{(\vec{p},p'),(\vec{X},X^\ast)}\Big(\prod_{j=1}^m\|f_j\|_{L^{p_j}(\R^d;X_j)}\Big)\|g\|_{L^{p'}(\R^d;X^\ast)},
\end{align*}
where in the second to last step we used H\"older's inequality with $s'\geq 1$ and \ref{it:hlchar1} and Corollary~\ref{corollary:weightedHL} in the last. Taking a supremum over all $g \in L^{p'}(\R^d;X^*)$ with $\|g\|_{L^{p'}(\R^d;X^\ast)}=1$ proves that $\vec{X}\in\HL_{\vec{r}}$, as asserted.

The proof of \ref{it:hlchar2}$\Rightarrow$\ref{it:hlchar1} relies on some combinatorics. To facilitate this, we set $r_{m+1}:=s'$ and $X_{m+1}:=X^\ast$. Fix $\vec{p}\in(0,\infty]^{m+1}$ with $\min\vec{p}>\max\vec{r}$, $\vec{f}\in L^{\vec{p}}(\R^d;\vec{X})$, and a finite collection of cubes $\mc{D}$. Note that
\[
\prod_{j=1}^{m+1}\langle f_j\rangle_{r_j,Q}\ind_Q=\prod_{j=1}^{m+1}\Big(\prod_{\substack{k=1\\k\neq j}}^{m+1}\langle f_k\rangle_{r_k,Q}\ind_Q\Big)^{\frac{1}{m}}
\]
for all $Q\in\mc{D}$ so that
\[
\widetilde{M}_{\vec{r}}^\mc{D}(\vec{f}\hspace{2pt})\leq\prod_{j=1}^{m+1}\widetilde{M}^\mc{D}_{\vec{q}}(\vec{g}\hspace{1pt})^{\frac{1}{m}}
\]
with
\begin{align*}
  \vec{q}&=(r_1,\ldots,r_{j-1},r_{j+1},\ldots r_{m+1})\\ \vec{g} &= (f_1,\ldots,f_{j-1},f_{j+1},\ldots,f_{m+1})
\end{align*}
Furthermore setting $\vec{Y_j}=(X_1,\cdots,X_{j-1},X_{j+1},\cdots, X_{m+1})$, we have
\[
\prod_{j=1}^{m+1}Y_j^{\frac{1}{m}}=\prod_{j=1}^{m+1}\prod_{\substack{k=1\\k\neq j}}^{m+1}X_k^{\frac{1}{m}}=\prod_{j=1}^{m+1}X_j=L^1(\Omega).
\]
Thus setting
$A_j:=\|\widetilde{M}_{\vec{q}}\|_{(p_j,\ldots,p_j),\vec{Y_j}}$, which is finite by Corollary \ref{corollary:weightedHL}, we have
\begin{align*}
\nrmb{M_{\vec{r}}^\mc{D}(\vec{f}\hspace{2pt})}_{L^p(\R^d;L^1(\Omega))}&\leq\prod_{j=1}^{m+1}\nrmb{ \widetilde{M}^\mc{D}_{\vec{q}}(\vec{g})^{\frac{1}{m}} }_{L^{p_j}(\R^d;Y_j^{\frac{1}{m}})}=\prod_{j=1}^{m+1}\nrmb{\widetilde{M}^\mc{D}_{\vec{q}}(\vec{g})}^{\frac{1}{m} }_{L^{\frac{p_j}{m}}(\R^d;Y_j)}\\
&\leq\prod_{j=1}^{m+1}A_j^{\frac{1}{m}} \prod_{\substack{k=1\\k\neq j}}^{m+1}\|f_k\|_{L^{p_j}(\R^d;X_k)}^{\frac{1}{m}}=\prod_{j=1}^{m+1}A_j^{\frac{1}{m}}\|f_k\|_{L^{p_j}(\R^d;X_k)},
\end{align*}
proving \ref{it:hlchar1}. The assertion follows.
\end{proof}

\begin{example}
Let $(\Omega,\mu)$ be a $\sigma$-finite measure space. In the case $m=1$, it follows from Proposition~\ref{prop:hlcharofumd} that $X\in\UMD_{r,s}$ for $1\leq r<s\leq\infty$ if and only if $X^r\in\HL$ and $(X^\ast)^{s'}\in\HL$. This implies the following:
\begin{enumerate}[(i)]
\item If $X=L^p(\Omega)$ with $p\in(r,s)$, then $X\in\UMD_{r,s}$.
\item If $X=L^{p,q}(\Omega)$ with $p,q\in(r,s)$, then $X\in\UMD_{r,s}$.
\item If $X=L^\Phi(\Omega)$ is a Musielak-Orlicz space such that $(\omega,t)\mapsto\Phi(\omega,t^{\frac{1}{r}})$ and $(\omega,t)\mapsto\Phi^\ast(\omega,t^{\frac{1}{s'}})$ are Young functions satisfying the $\Delta_2$ condition, then $X\in\UMD_{r,s}$. See \cite{FG91, LVY18} for the $\UMD$ (and thus the $\HL$) property of these spaces.
\end{enumerate}
\end{example}

In \cite{LN19} vector-valued extensions of multilinear operators in quasi-Banach function spaces were constructed through weighted techniques. In that work the
condition that $((X_j^{r_j})^*)^{(s_j/r_j)'}$  has the $\UMD$ property for $1\leq j\leq m$ was imposed. In the next proposition we wish to compare this assumption to our limited range multilinear $\UMD$ property.

\begin{proposition}\label{proposition:comparisonoldassumption}
  Let $\vec{X}$ be an $m$-tuple of quasi-Banach function spaces, let $\vec{r}\in(0,\infty)^m$ and take $\vec{r}<\vec{s}\leq \infty$. Suppose that $X_j$ is $r_j$-convex, $s_j$-concave and
  $\hab{(X_j^{r_j})^*}^{(s_j/r_j)'}$
  has the $\UMD$ property for $1\leq j \leq m$. Then for all $q\in(0,r]$ we have $\vec{X}^q\in\UMD_{\frac{\vec{r}}{q},\frac{s}{q}}$. In particular, $\vec{X} \in \UMD_{\vec{r},s}$ if $r\geq 1$.
\end{proposition}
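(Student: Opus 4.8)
The plan is to first reduce to the case $q=1$, then unwind the definition of $\UMD_{\vec r,s}$ via Proposition~\ref{prop:HLlintomulti}, and finally verify the two resulting scalar Hardy--Littlewood statements using duality and two stability properties of $\HL$.

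First I would note that replacing $\vec X$ by $\vec X^q=(X_1^q,\ldots,X_m^q)$, $\vec r$ by $\vec r/q$ and $\vec s$ by $\vec s/q$ leaves every hypothesis invariant: since $(X_j^q)^{r_j/q}=X_j^{r_j}$, the space $X_j^q$ is $(r_j/q)$-convex and $(s_j/q)$-concave precisely when $X_j$ is $r_j$-convex and $s_j$-concave, and $\bigl((X_j^q)^{r_j/q}\bigr)^{*\,\bigl((s_j/q)/(r_j/q)\bigr)'}=\bigl((X_j^{r_j})^*\bigr)^{(s_j/r_j)'}$ is unchanged; moreover the rescaled tuple satisfies $1/(r/q)=q\sum_j 1/r_j=q/r\le1$, i.e.\ $r\ge1$. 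Hence it suffices to prove the statement for $q=1$, and the final assertion is then exactly the instance $q=1\in(0,r]$. From now on $q=1$ and $r\ge1$.

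By the definition of $\UMD_{\vec r,s}$ I then need to check that $X:=\prod_{j=1}^m X_j$ is an order-continuous Banach function space and that $(\vec X,X^*)\in\HL_{(\vec r,s')}$. The first point is bookkeeping: $X$ is $r$-convex with $r\ge1$, hence genuinely normed, and it is order-continuous because each $X_j$ is — when $s_j<\infty$ this is automatic from $s_j$-concavity, and when $s_j=\infty$ it follows since $\bigl((X_j^{r_j})^*\bigr)^1$ is then $\UMD$, hence reflexive, forcing $X_j^{r_j}$ (a closed sublattice of its bidual) to be reflexive, and order-continuity then passes to $X_j=(X_j^{r_j})^{1/r_j}$ under convexification and on to the product. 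For the Hardy--Littlewood condition I would apply Proposition~\ref{prop:HLlintomulti} to the $(m+1)$-tuple $(X_1,\ldots,X_m,X^*)$, whose product is $X\cdot X^*=L^1(\Omega)$: this reduces the claim to showing $X_j^{r_j}\in\HL$ for $1\le j\le m$ and $(X^*)^{s'}\in\HL$. To obtain the first family, write $W=X_j^{r_j}$ and $u=(s_j/r_j)'\in(1,\infty)$; the hypothesis says $Z:=(W^*)^u$ is $\UMD$, so by the Bourgain--Rubio de Francia characterization both $Z$ and $Z^*$ lie in $\HL$. Now $\HL$ is preserved under $p$-convexification $Y\mapsto Y^\theta$, $0<\theta\le1$ (checked by rescaling the exponent in the defining maximal inequality and using Jensen's inequality), which already gives $W^*=Z^{1/u}\in\HL$; and the Lozanovskii duality identity $(Z^{1/u})^*=L^{u'}\cdot(Z^*)^{1/u}$ (with $u'=s_j/r_j$) exhibits $W=W^{**}$ as a Calderón--Lozanovskii interpolant built from the $\HL$ space $Z^*$, whence $W\in\HL$ by stability of the Hardy--Littlewood property under the Calderón--Lozanovskii construction. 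The condition $(X^*)^{s'}\in\HL$ is handled in the same spirit: one first deduces $(X_j^*)^{s_j'}\in\HL$ from the hypothesis exactly as above, and then combines these through Lozanovskii duality for the product $X=\prod_j X_j$ and the exponent identity $1/s'=1-\sum_j 1/s_j$.

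The delicate step, which I expect to be the main obstacle, is the last one in each case: passing from ``both ingredients lie in $\HL$'' to ``the combination — which we know to be $X_j^{r_j}$, respectively $(X^*)^{s'}$ — lies in $\HL$''. This is false for arbitrary products, since $\ell^2\cdot\ell^2=\ell^1\notin\HL$, so one genuinely has to use that these particular spaces carry exactly the convexity/concavity making them Calderón--Lozanovskii interpolants of $\HL$ spaces; equivalently, in the scalar $L^p$ heuristic, the relevant concavification exponents must stay inside the range $(1,\infty]$, which is precisely where the strict inequalities $r_j<s_j$ and the resulting two-sided bound on $X_j$ (coming from $\UMD$ of $Z_j$) are used. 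Tracking these exponents, together with the quasi-Banach bookkeeping in the initial rescaling and in forming the product $X$, is where the real work lies.
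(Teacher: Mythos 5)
Your overall skeleton is the same as the paper's: unwind the definition of $\UMD_{\frac{\vec{r}}{q},\frac{s}{q}}$ and use Proposition~\ref{prop:HLlintomulti} to reduce to the two scalar statements $X_j^{r_j}\in\HL$ and $((X^q)^\ast)^{(s/q)'}\in\HL$ (your preliminary reduction to $q=1$ and your remarks on order-continuity are fine, if not strictly needed). The gap is in how you verify these two statements. For the first one you keep only the Hardy--Littlewood information: from $Z:=\hab{(X_j^{r_j})^*}^{(s_j/r_j)'}\in\UMD$ you pass to $Z,Z^\ast\in\HL$, then (correctly, via the Jensen/rescaling argument) to $W^\ast=Z^{1/u}\in\HL$ with $W=X_j^{r_j}$ --- but the step you actually need, deducing $W\in\HL$, does not follow from this. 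The $\HL$ property does not dualize ($\ell^\infty\in\HL$ while $\ell^1\notin\HL$), it is not stable under pointwise products (as you note yourself with $\ell^2\cdot\ell^2=\ell^1$), and the ``stability of the Hardy--Littlewood property under the Calder\'on--Lozanovskii construction'' that you invoke to write $W=(Z^\ast)^{1/u}\cdot L^{u'}\in\HL$ is neither proved nor available; your closing paragraph concedes that this is ``where the real work lies'', so the argument is not closed. The same issue, in aggravated form, affects your treatment of $((X^q)^\ast)^{(s/q)'}\in\HL$, where the Lozanovskii-duality bookkeeping for the product $X=\prod_j X_j$ is only sketched and the final combination step again rests on the unproved product stability.

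The way the paper closes these steps is to transport the $\UMD$ property itself rather than only its $\HL$ shadow, because $\UMD$ (unlike $\HL$) is stable under exactly the operations you need: since $(s_j/r_j)'\geq 1$, the hypothesis $\hab{(X_j^{r_j})^*}^{(s_j/r_j)'}\in\UMD$ gives $(X_j^{r_j})^\ast\in\UMD$ by the convexification theorem of Rubio de Francia \cite[Theorem~III.4]{Ru86}, hence $X_j^{r_j}\in\UMD$ by duality, and in particular $X_j^{r_j}\in\HL$; and $((X^r)^\ast)^{(s/r)'}\in\UMD$ follows from the product result \cite[Proposition~3.4]{LN19}, which is then rescaled to $((X^q)^\ast)^{(s/q)'}\in\UMD$ for all $q\in(0,r]$ by \cite[Proposition~3.3(iii)]{LN19}, again giving the required $\HL$ property. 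If you want to complete your route, you would in effect have to re-prove these $\UMD$ stability results at the level of the lattice maximal operator, which is precisely the missing content; as written, the proposal identifies the correct reductions but leaves the essential implications unproved.
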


\begin{proof}
Note that $\vec{X}^q\in\UMD_{\frac{\vec{r}}{q},\frac{s}{q}}$ per definition means that $$(X_1^q,\ldots,X_m^q,(X^q)^\ast)\in\HL_{\big(\frac{\vec{r}}{q},\big(\frac{s}{q}\big)'\big)}.$$ So by Proposition \ref{prop:HLlintomulti} it suffices to show $(X_j^q)^{\frac{r_j}{q}}=X_j^{r_j} \in \HL$ for $j=1,\cdots,m$ and $((X^q)^*)^{(s/q)'} \in \HL$.
Since $(s_j/r_j)'\geq  1$, we know that $(X_j^{r_j})^*$ has the $\UMD$ property (see \cite[Theorem III.4]{Ru86}) and thus $X_j^{r_j} \in \HL$ for $j=1,\cdots,m$.
To show $((X^q)^*)^{(s/q)'} \in \HL$ we note that by \cite[Proposition 3.4]{LN19} we have $((X^r)^*)^{(s/r)'} \in \UMD$. Then, by \cite[Proposition 3.3(iii)]{LN19} this implies that also $((X^q)^*)^{(s/q)'} \in \UMD$ for all $q\in(0,r]$. In particular, we have $((X^q)^*)^{(s/q)'} \in \HL$, as desired. The assertion follows.
\end{proof}

the $\UMD_{\vec{r},s}$ class is stable under iteration. Recall that for quasi-Banach function spaces $X$ and $Y$ respectively over measure spaces $(\Omega_1,\mu_1)$ and $(\Omega_2,\mu_2)$ the mixed-norm space $X(Y)$ is given by all measurable functions $f\colon \Omega_1\times \Omega_2 \to \C$ such that
\begin{equation*}
  \nrmb{\omega_1 \mapsto \nrm{f(\omega_1,\cdot)}_Y}_X<\infty.
\end{equation*}

\begin{proposition}\label{prop:iterateumd}
Let $\vec{r} \in (0,\infty)^m$ and $s \in (1,\infty]$ and let $\vec{X}$ and $\vec{Y}$ be $m$-tuples of quasi-Banach function spaces. If $\vec{X},\vec{Y}\in \UMD_{\vec{r},s}$, then $\vec{X}(\vec{Y})\in\UMD_{\vec{r},s}$.
\end{proposition}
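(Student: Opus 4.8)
The plan is to unravel the definition of $\UMD_{\vec r,s}$ into a Hardy--Littlewood estimate for a tuple of mixed-norm spaces, and then prove that estimate by playing the sparse domination of the multisublinear lattice maximal operator (Theorem~\ref{theorem:sparsedomHL}) against the hypotheses, staying throughout at the level of $L^1$-norms on the fibre variables. Write $X=\prod_{j=1}^m X_j$ over $(\Omega_1,\mu_1)$ and $Y=\prod_{j=1}^m Y_j$ over $(\Omega_2,\mu_2)$, and recall the standard identities $\prod_{j=1}^m X_j(Y_j)=X(Y)$ and $(X(Y))^\ast=X^\ast(Y^\ast)$ for order-continuous Banach function spaces, together with Lozanovskii's factorisation $X\cdot X^\ast=L^1(\Omega_1)$, $Y\cdot Y^\ast=L^1(\Omega_2)$ (see e.g. \cite{Lo69,LT79,Sc10}). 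Since $X$ and $Y$ are order-continuous Banach function spaces, so is $X(Y)$, and $\prod_{j=1}^m X_j(Y_j)\cdot X^\ast(Y^\ast)=X(Y)\cdot X^\ast(Y^\ast)=L^1(\Omega_1\times\Omega_2)$. Unwinding the definition, $\vec X(\vec Y)\in\UMD_{\vec r,s}$ is therefore exactly the statement that the $(m+1)$-tuple $\vec Z:=(X_1(Y_1),\dots,X_m(Y_m),X^\ast(Y^\ast))$, which is $(\vec r,s')$-convex and satisfies $\prod_{j=1}^{m+1}Z_j=L^1(\Omega_1\times\Omega_2)$, lies in $\HL_{(\vec r,s')}$.

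Set $\vec\rho:=(\vec r,s')$; since $s>r$ we have $\tfrac1s<\tfrac1r=\sum_{j=1}^m\tfrac1{r_j}$, hence $\rho:=\big(\sum_{j=1}^{m+1}\tfrac1{\rho_j}\big)^{-1}<1$. Choose an $(m+1)$-tuple $\vec p>\vec\rho$ with $\sum_{j=1}^{m+1}\tfrac1{p_j}<1$, so that $1<p<\infty$; by density it suffices to bound $\nrm{\widetilde M^{\mc D}_{\vec\rho}(\vec f\hspace{2pt})}_{L^p(\R^d;L^1(\Omega_1\times\Omega_2))}$ by $\prod_{j=1}^{m+1}\nrm{f_j}_{L^{p_j}(\R^d;Z_j)}$, uniformly over finite $\mc D\subseteq\ms D$ and simple $\vec f$. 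For finite $\mc D$ the lattice supremum defining $\widetilde M^{\mc D}_{\vec\rho}$ is an ordinary pointwise supremum, so for a.e.\ $\omega_1$ one has $\widetilde M^{\mc D}_{\vec\rho}(\vec f\hspace{2pt})(\,\cdot\,)(\omega_1,\,\cdot\,)=\widetilde M^{\mc D}_{\vec\rho}\big(\vec f(\,\cdot\,)(\omega_1,\,\cdot\,)\big)$, the lattice maximal operator of the $(m+1)$-tuple $\vec V:=(\vec Y,Y^\ast)$. Since $\widetilde M^{\mc D}_{\vec\rho}(\vec f\hspace{2pt})\geq0$, duality in $L^p(\R^d)$ and Tonelli's theorem give
\[
\nrm{\widetilde M^{\mc D}_{\vec\rho}(\vec f\hspace{2pt})}_{L^p(\R^d;L^1(\Omega_1\times\Omega_2))}=\sup_{0\le\psi,\ \nrm{\psi}_{L^{p'}}\le1}\ \int_{\Omega_1}\nrmb{\nrmb{\widetilde M^{\mc D}_{\vec\rho}\big(\vec f(\,\cdot\,)(\omega_1,\,\cdot\,)\big)}_{L^1(\Omega_2)}\psi}_{L^1(\R^d)}\dd\mu_1(\omega_1).
\]

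Now $\vec V\in\HL_{\vec\rho}$ because $\vec Y\in\UMD_{\vec r,s}$, and $\prod_{j=1}^{m+1}V_j=Y\cdot Y^\ast=L^1(\Omega_2)$ is order-continuous and $1$-convex with $1\geq\rho$, so Theorem~\ref{theorem:sparsedomHL} applies to $\vec V$ with $q=1$; applied fibrewise in $\omega_1$ it bounds the inner $L^1(\R^d)$-norm above by $\nrm{M_{(\vec\rho,1)}\big(\nrm{\vec f(\,\cdot\,)(\omega_1,\,\cdot\,)}_{\vec V},\psi\big)}_{L^1(\R^d)}$, with constant depending only on $\vec Y$ and $\vec\rho$. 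Integrating in $\omega_1$ and applying Tonelli once more turns the right-hand side into $\nrm{\widetilde M_{(\vec\rho,1)}\big(\nrm{\vec f\hspace{2pt}}_{\vec V},\psi\big)}_{L^1(\R^d;L^1(\Omega_1))}$, where $\widetilde M_{(\vec\rho,1)}$ is now the $(m+2)$-sublinear lattice maximal operator of the tuple $(\vec X,X^\ast,\C)$: indeed $\nrm{f_j}_{V_j}$ is valued in $W_j$ (with $W_j:=X_j$ for $j\le m$ and $W_{m+1}:=X^\ast$) and $\psi$ is scalar-valued. It remains to observe that $(\vec X,X^\ast,\C)\in\HL_{(\vec r,s',1)}$: from the pointwise bound $\widetilde M_{(\vec r,s',1)}((u_j)_{j=1}^{m+1},h)\le\widetilde M_{(\vec r,s')}((u_j)_{j=1}^{m+1})\cdot Mh$, H\"older's inequality on $\R^d$, the boundedness of $M$ on $L^{p'}(\R^d)$ (using $p>1$), the identity $X\cdot X^\ast\cdot\C=L^1(\Omega_1)$, and $(\vec X,X^\ast)\in\HL_{(\vec r,s')}$ at the exponent $\vec p$ (valid by the $p$-independence of the property, Corollary~\ref{corollary:weightedHL}), one gets that $\widetilde M_{(\vec\rho,1)}$ maps $L^{(\vec p,p')}(\R^d;(\vec X,X^\ast,\C))$ boundedly into $L^1(\R^d;L^1(\Omega_1))$. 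Since $\sum_{j=1}^{m+1}\tfrac1{p_j}+\tfrac1{p'}=1$, this produces $\big(\prod_{j=1}^{m+1}\nrm{\nrm{f_j}_{V_j}}_{L^{p_j}(\R^d;W_j)}\big)\nrm{\psi}_{L^{p'}}=\big(\prod_{j=1}^{m+1}\nrm{f_j}_{L^{p_j}(\R^d;Z_j)}\big)\nrm{\psi}_{L^{p'}}$, and taking the supremum over $\psi$ completes the proof.

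The step I expect to be the main obstacle is exactly the one a naive Fubini argument would stumble on: one cannot pass the $\R^d$-integration through the inner $Z_j$-norms, since that would require $Z_j$ to be $p_j$-convex, whereas $Z_j$ is only $\rho_j$-convex and $p_j>\rho_j$. This is circumvented by (i) keeping the target at the $L^1$-level on the fibres throughout, where $X\cdot X^\ast$ and $Y\cdot Y^\ast$ collapse to $L^1$ and no convexity is lost in the Tonelli/H\"older steps, and (ii) using the sparse domination of Theorem~\ref{theorem:sparsedomHL} for $\vec Y$ to trade the $\vec V$-valued maximal operator in the $\Omega_2$-variable for a scalar multisublinear maximal form; what then remains is a maximal operator acting on $\vec X$-valued functions, to which the hypothesis on $\vec X$ applies directly and which returns precisely the norms $\nrm{f_j}_{L^{p_j}(\R^d;Z_j)}$ in the correct order of integration.
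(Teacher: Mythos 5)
Your proof is correct, and its first half is exactly the paper's argument: fix a finite collection of cubes, work fibrewise in $\omega_1$ via Fubini/Tonelli, and apply the sparse domination of the lattice maximal operator (Theorem~\ref{theorem:sparsedomHL}) to the tuple $(\vec{Y},Y^\ast)$ to collapse the $\Omega_2$-variable. Where you diverge is the $\vec{X}$-level step. The paper stays at an exponent tuple with harmonic sum $1$, so no scalar duality is needed: it simply applies Theorem~\ref{theorem:sparsedomHL} a second time, now in the $\omega_1$-variable for the tuple $(\vec{X},X^\ast)$, reducing everything to the scalar form $M^{\mc{D}}_{(\vec{r},s')}\bigl(\|\vec{f}\|_{\vec{X}(\vec{Y})},\|g\|_{X^\ast(Y^\ast)}\bigr)$, and then concludes with Proposition~\ref{prop:multimaxwest} and a density argument. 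You instead choose exponents with $p>1$, dualize in the scalar variable against $\psi\in L^{p'}(\R^d)$, split off $\psi$ through the pointwise bound $\widetilde{M}_{(\vec{r},s',1)}(\vec{u},\psi)\le \widetilde{M}_{(\vec{r},s')}(\vec{u})\cdot M\psi$, and finish with H\"older, the $L^{p'}$-boundedness of $M$, and the $\HL_{(\vec{r},s')}$-boundedness of $(\vec{X},X^\ast)$ at your chosen exponent via Corollary~\ref{corollary:weightedHL}. This works, but it is more roundabout: the $p$-independence you invoke is itself obtained through Theorem~\ref{theorem:sparsedomHL}, so you are assembling the same toolkit with an extra duality/H\"older layer that the paper's choice $p=1$ renders unnecessary; what your route buys is that you only need one application of the sparse domination theorem, at the cost of appealing to the weighted/exponent-shifted corollary for the outer tuple. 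Both arguments rest on the same identifications, which the paper uses silently and you state as standard: $\prod_{j=1}^m X_j(Y_j)=X(Y)$, $(X(Y))^\ast=X^\ast(Y^\ast)$, order continuity and $(\vec{r},s')$-convexity of the iterated spaces, and the Lozanovskii factorization $X(Y)\cdot (X(Y))^\ast=L^1(\Omega_1\times\Omega_2)$, together with routine measurability and density reductions; none of this constitutes a gap relative to the paper's own level of detail.
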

\begin{proof}
Denote by $(\Omega_1,\mu_1)$, $(\Omega_2,\mu_2)$ the $\sigma$-finite measure spaces that $\vec{X}$, $\vec{Y}$ are respectively defined over and write
\[
  A_1:= \sup_{\mc{D}\subseteq \ms{D} \text{ finite}}\,\nrmb{\widetilde{M}^{\mc{D}}_{\vec{r}}}_{L^{\vec{r}}(\R^d;\vec{X})  \to L^{r,\infty}(\R^d;X) },\quad  A_2:= \sup_{\mc{D}\subseteq \ms{D} \text{ finite}}\,\nrmb{\widetilde{M}^{\mc{D}}_{\vec{r}}}_{L^{\vec{r}}(\R^d;\vec{Y})  \to L^{r,\infty}(\R^d;Y) }.
\]
Let $\mc{D}$ denote a finite collection of cubes and let $\vec{f}\in L^\infty_c(\R^d;\vec{X}(\vec{Y}))$. By Fubini's Theorem and by applying Theorem~\ref{theorem:sparsedomHL} twice we obtain
\begin{align*}
\nrmb{\widetilde{M}^\mc{D}_{\vec{r},s}(\vec{f},g)}_{L^1(\R^d;L^1(\Omega_1\times\Omega_2))}&=\int_{\Omega_1} \nrmb{\widetilde{M}^\mc{D}_{\vec{r},s}(\vec{f}(\cdot,\omega_1,\cdot),g(\cdot,\omega_1,\cdot)) }_{L^1(\R^d;L^1(\Omega_1))}\,\mathrm{d}\mu_1(\omega_1)\\
&\lesssim A_2\int_{\Omega_1}\nrmb{\widetilde{M}^\mc{D}_{\vec{r},s}(\|\vec{f}(\cdot,\omega_1,\cdot)\|_{\vec{Y}}, \|g(\cdot,\omega_1,\cdot)\|_{Y^\ast})}_{L^1(\R^d)}\,\mathrm{d}\mu_1(\omega_1)\\
&=A_1\nrmb{\widetilde{M}^\mc{D}_{\vec{r},s}(\|\vec{f}\|_{\vec{Y}},\|g\|_{Y^\ast})}_{L^1(\R^d;L^1(\Omega_1))}\\
&\lesssim A_1A_2\nrmb{M^\mc{D}_{\vec{r},s}(\|\vec{f}\|_{\vec{X}(\vec{Y})},\|g\|_{X^\ast(Y^\ast)})}_{L^1(\R^d)}.
\end{align*}
Thus, by Proposition~\ref{prop:multimaxwest} and a density argument we conclude that $\vec{X}(\vec{Y})\in\UMD_{\vec{r},s}$, as desired.
\end{proof}

Next we show that we can add $L^\infty$ spaces to existing $\UMD$ tuples to create new ones. Note in particular that in the case $m=2$, this following result implies that if $X$ has the $\UMD$ property, then $(X,L^\infty(\Omega))\in\UMD_{(1,1),\infty}$.
\begin{proposition}
Let $\vec{r} \in (0,\infty)^m$ and $s \in (1,\infty]$. Let $\vec{X}$ be an $m-1$-tuple of quasi-Banach function spaces over a measure space $\Omega$. If $\vec{X}\in\UMD_{(r_1,\ldots,r_{m-1}),s}$, then
\[
(X_1,\ldots,X_{m-1},L^\infty(\Omega))\in\UMD_{\vec{r},s}.
\]
\end{proposition}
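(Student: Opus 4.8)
The plan is to reduce everything to a Hardy--Littlewood bound for the enlarged tuple and then exploit that, for $L^\infty(\Omega)$-valued functions, the lattice averaging operator is controlled pointwise by the \emph{scalar} maximal operator applied to the $L^\infty(\Omega)$-norm, which escapes the lattice supremum entirely. Set $\vec{X}':=(X_1,\ldots,X_{m-1},L^\infty(\Omega))$. By Proposition~\ref{example:products}\ref{it:product0} the product is unchanged, $X':=\prod_{j=1}^m X_j'=\big(\prod_{j=1}^{m-1}X_j\big)\cdot L^\infty(\Omega)=X$, so $X'$ is an order-continuous Banach function space by the hypothesis $\vec{X}\in\UMD_{(r_1,\ldots,r_{m-1}),s}$ and $(X')^*=X^*$. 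Thus the first requirement in the definition of $\UMD_{\vec{r},s}$ holds automatically, and it remains to prove $(X_1,\ldots,X_{m-1},L^\infty(\Omega),X^*)\in\HL_{(\vec{r},s')}$. The required convexity is immediate: $X_j$ is $r_j$-convex and $X^*$ is $s'$-convex because $(\vec{X},X^*)\in\HL_{((r_1,\ldots,r_{m-1}),s')}$, and $L^\infty(\Omega)$ is $p$-convex for every $p\in(0,\infty)$; so the only substantive point is the uniform boundedness of the maximal operator $\widetilde{M}^{\mc{D}}_{(\vec{r},s')}$.

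For this, fix a finite $\mc{D}\subseteq\ms{D}$ and functions $f_1,\ldots,f_{m-1},f_m,g$ with values in $X_1,\ldots,X_{m-1},L^\infty(\Omega),X^*$ respectively. The key step is the pointwise bound, valid in the lattice $L^\infty(\Omega)$ for every cube $Q$,
\[
\ip{f_m}_{r_m,Q}\leq\ipb{\nrm{f_m}_{L^\infty(\Omega)}}_{r_m,Q},
\]
the right-hand side being a scalar; this follows from $\abs{f_m(y,\omega)}\leq\nrm{f_m(y,\cdot)}_{L^\infty(\Omega)}$ for a.e.\ $(y,\omega)$ together with Fubini's theorem applied to the Bochner average. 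Substituting this into the definition of $\widetilde{M}^{\mc{D}}_{(\vec{r},s')}$, and using that the scalar $\ipb{\nrm{f_m}_{L^\infty(\Omega)}}_{r_m,Q}\ind_Q$ is dominated by $M_{r_m}\hab{\nrm{f_m}_{L^\infty(\Omega)}}$ and does not depend on $Q$, hence factors out of the lattice supremum, one obtains, in $L^1(\Omega)=\big(\prod_{j=1}^{m-1}X_j\big)\cdot X^*$,
\[
\nrmb{\widetilde{M}^{\mc{D}}_{(\vec{r},s')}(f_1,\ldots,f_m,g)}_{L^1(\Omega)}\leq M_{r_m}\hab{\nrm{f_m}_{L^\infty(\Omega)}}\cdot\nrmb{\widetilde{M}^{\mc{D}}_{((r_1,\ldots,r_{m-1}),s')}(f_1,\ldots,f_{m-1},g)}_{L^1(\Omega)}
\]
pointwise a.e.\ on $\R^d$.

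To finish I would choose $\vec{p}\in(0,\infty)^{m+1}$ with $p_j>r_j$ for $1\leq j\leq m$ and $p_{m+1}>s'$, take $L^p(\R^d;L^1(\Omega))$-norms in the last display, split by H\"older's inequality in $x$ with the exponent $p_m$ on the first factor and the remaining exponents on the second, bound the first factor by $\nrm{f_m}_{L^{p_m}(\R^d;L^\infty(\Omega))}$ using the boundedness of $M_{r_m}$ on $L^{p_m}(\R^d)$ (valid since $p_m/r_m>1$), and bound the second factor using the Hardy--Littlewood property of $(X_1,\ldots,X_{m-1},X^*)$ from the hypothesis, together with its independence of the exponent (Corollary~\ref{corollary:weightedHL}). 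This produces a bound on $\nrmb{\widetilde{M}^{\mc{D}}_{(\vec{r},s')}(f_1,\ldots,f_m,g)}_{L^p(\R^d;L^1(\Omega))}$ by $\nrm{f_m}_{L^{p_m}(\R^d;L^\infty(\Omega))}\prod_{j=1}^{m-1}\nrm{f_j}_{L^{p_j}(\R^d;X_j)}\cdot\nrm{g}_{L^{p_{m+1}}(\R^d;X^*)}$ with constant independent of $\mc{D}$; taking the supremum over $\mc{D}$ gives $(X_1,\ldots,X_{m-1},L^\infty(\Omega),X^*)\in\HL_{(\vec{r},s')}$, which is the claim.

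The step I expect to be the main obstacle is making the pointwise domination $\ip{f_m}_{r_m,Q}\leq\ipb{\nrm{f_m}_{L^\infty(\Omega)}}_{r_m,Q}$ completely rigorous: one must interpret the $L^\infty(\Omega)$-valued $r_m$-average as a Bochner integral (using the $r_m$-convexity of $L^\infty(\Omega)$), identify it $\mu$-a.e.\ with the corresponding scalar pointwise average via Fubini, and then crucially observe that the dominating quantity is constant in $\omega$ and therefore survives as a scalar prefactor once the lattice supremum over the cubes of $\mc{D}$ has been taken. Everything after that is a routine combination of H\"older's inequality with the already-available boundedness of the scalar Hardy--Littlewood maximal operator and of the lattice maximal operator for the shorter tuple $(X_1,\ldots,X_{m-1},X^*)$.
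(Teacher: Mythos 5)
Your proof is correct and takes essentially the same route as the paper: both arguments factor the $L^\infty(\Omega)$-slot out of the multisublinear lattice maximal operator and then conclude by H\"older's inequality together with the Hardy--Littlewood property of the shorter tuple $(X_1,\ldots,X_{m-1},X^\ast)$, whose use at the chosen exponents (in your case as in the paper's) rests on the exponent-independence provided by Corollary~\ref{corollary:weightedHL}. The only difference is cosmetic: you dominate the $L^\infty(\Omega)$-valued average by the scalar average of $\nrm{f_m}_{L^\infty(\Omega)}$ and work with a finite exponent $p_m>r_m$ via the scalar maximal bound, whereas the paper keeps the lattice operator $\widetilde{M}^{\mc{D}}_{r_m}(f_m)$ and takes $p_m=\infty$, estimating it trivially in $L^\infty(\R^d;L^\infty(\Omega))$.
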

\begin{proof}
We first note that by Proposition~\ref{example:products}\ref{it:product0} we have $\big(\prod_{j=1}^{m-1}X_j\big)\cdot L^\infty(\Omega)=X\cdot L^\infty(\Omega)=X$.
Next, let $\mc{D}$ denote a finite collection of cubes and fix $\vec{p}\in(1,\infty]^m$ with $p_m=\infty$ and $\vec{p}>\vec{r}$, $p<s$. For $\vec{f}\in L^{\vec{p}}(\R^d;\vec{X})$, $g\in L^{p'}(\R^d;X^\ast)$ we have $$\widetilde{M}^{\mc{D}}_{(\vec{r},s')}(\vec{f},g)\leq \widetilde{M}^{\mc{D}}_{(r_1,\ldots,r_{m-1},s')}(f_1,\ldots,f_{m-1},g) \widetilde{M}^{\mc{D}}_{r_m}(f_m).$$ Hence,
\begin{align*}
&\|\widetilde{M}^{\mc{D}}_{(\vec{r},s')}(\vec{f},g)\|_{L^1(\R^d;L^1(\Omega))}\\
&\leq\|\widetilde{M}^{\mc{D}}_{(r_1,\ldots,r_{m-1},s')}(f_1,\ldots,f_{m-1},g) \|_{L^1(\R^d;L^1(\Omega))}\|\widetilde{M}^{\mc{D}} _{r_m}(f_m)\|_{L^{\infty}(\R^d;L^\infty(\Omega))}\\
&\leq\|\widetilde{M}_{(r_1,\ldots,r_{m-1},s')}\|_{(p_1,\ldots,p_{m-1},p'),\vec{X}} \Big(\prod_{j=1}^{m-1}\|f_j\|_{L^{p_j}(\R^d;X_j)}\Big)\|f_m\|_{L^\infty(\R^d;L^\infty(\Omega))}\|g\|_{L^{p'}(\R^d;X^\ast)},
\end{align*}
proving that $(X_1,\ldots,X_{m-1},L^\infty(\Omega))\in\UMD_{\vec{r},s}$. The assertion follows.
\end{proof}

To end this section we will give a family of examples in the form of iterated $L^p$-spaces that belong to the $\UMD_{\vec{r},s}$-class.
\begin{proposition}\label{prop:lebesgueumd}
Let $\vec{r} \in (0,\infty)^m$ and $s \in (1,\infty]$. Let $K\in\N$ and let $\vec{t}\hspace{2pt}^1,\ldots,\vec{t}\hspace{2pt}^K\in(0,\infty]^m$ with $\vec{t}\hspace{2pt}^k>\vec{r}$ and $1\leq t^k<s$ for all $k\in\{1,\cdots,K\}$. Let $(\Omega_k,\mu_k)$ for $k\in\{1,\cdots,K\}$ be $\sigma$-finite measure spaces and for $j\in\{1,\cdots,m\}$ we set $$X_j:=L^{t^1_j}(\Omega_1;\cdots;L^{t^K_j}(\Omega_K)).$$
 Then $\vec{X}\in\UMD_{\vec{r},s}$.
\end{proposition}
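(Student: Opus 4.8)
The strategy is to build the iterated Lebesgue tuple $\vec{X}$ out of a single base case and then iterate. First I would reduce to the case $K=1$: once we know that for every choice of exponent tuple $\vec{u}\in(0,\infty]^m$ with $\vec{u}>\vec{r}$ and $1\leq u<s$ the tuple $\bigl(L^{u_1}(\Omega),\ldots,L^{u_m}(\Omega)\bigr)$ lies in $\UMD_{\vec{r},s}$, the general statement follows by applying Proposition~\ref{prop:iterateumd} a total of $K-1$ times. Indeed, writing $X_j = L^{t^1_j}(\Omega_1; Z_j)$ with $Z_j := L^{t^2_j}(\Omega_2;\cdots;L^{t^K_j}(\Omega_K))$, the tuple $\vec{X}$ is exactly $\vec{V}(\vec{Z})$ where $\vec{V} = (L^{t^1_1}(\Omega_1),\ldots,L^{t^1_m}(\Omega_1))$; by induction on $K$ we have $\vec{Z}\in\UMD_{\vec{r},s}$, by the base case $\vec{V}\in\UMD_{\vec{r},s}$, and Proposition~\ref{prop:iterateumd} gives $\vec{V}(\vec{Z})\in\UMD_{\vec{r},s}$. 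So the whole proposition rests on the base case $K=1$.

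For the base case I need to show $\vec{X}=(L^{u_1}(\Omega),\ldots,L^{u_m}(\Omega))\in\UMD_{\vec{r},s}$, i.e. that $X=\prod_{j=1}^m L^{u_j}(\Omega)$ is an order-continuous Banach function space and that $(\vec{X},X^*)\in\HL_{(\vec{r},s')}$. By Proposition~\ref{example:products}\ref{it:product1} we have $X = L^u(\Omega)$ with $\frac1u = \sum_{j=1}^m \frac1{u_j}$, and since $1\leq u < s$ we get $u\in[1,\infty)$, so $X$ is indeed an order-continuous Banach function space with $X^* = L^{u'}(\Omega)$ (interpreting $u'=\infty$ if $u=1$). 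Now I invoke Proposition~\ref{prop:HLlintomulti}: it suffices to check that $(L^{u_j}(\Omega))^{r_j} = L^{u_j/r_j}(\Omega)\in\HL$ for each $j$, and that $(X^*)^{s'} = L^{u'/s'}(\Omega)\in\HL$ (with the convention that $s'=1$ when $s=\infty$, in which case this last space is just $L^{u'}(\Omega)$). Since $u_j > r_j$ we have $u_j/r_j\in(1,\infty]$, and Lebesgue spaces $L^p(\Omega)$ for $p\in(1,\infty]$ have the Hardy--Littlewood property (as recalled just before Proposition~\ref{prop:HLlintomulti}); the same applies to $L^\infty$. For the dual factor: from $u<s$ we get $u'>s'$ (both finite, or with the obvious endpoint reading), hence $u'/s'\in(1,\infty]$, so $L^{u'/s'}(\Omega)\in\HL$ as well. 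This gives $(\vec{X},X^*)\in\HL_{(\vec{r},s')}$ and therefore $\vec{X}\in\UMD_{\vec{r},s}$, completing the base case and hence the proof.

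The only genuinely delicate points are bookkeeping ones: making sure the endpoint conventions ($u=1$ forcing $s=\infty$ is impossible since $1\leq u<s$ forces $s>1$; $s=\infty$ giving $s'=1$ and the dual factor being $L^{u'}$ with $u'\in(1,\infty]$; $u_j=\infty$ giving the trivial factor $L^\infty$) are all consistent, and that the hypotheses $\vec{t}^k>\vec{r}$, $1\leq t^k<s$ propagate correctly through the iteration — which they do, since each $Z_j$ in the inductive step is an iterated Lebesgue space with the same exponent constraints. I expect the main obstacle is purely notational: correctly tracking the product-space identities of Proposition~\ref{example:products} and the concavification/rescaling identities $(L^p)^r = L^{p/r}$ through the reduction, rather than any analytic difficulty.
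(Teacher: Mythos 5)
Your proof is correct and follows essentially the same route as the paper: reduce to $K=1$ via Proposition~\ref{prop:iterateumd}, identify $X=\prod_{j=1}^m L^{t_j}(\Omega)=L^t(\Omega)$ via Proposition~\ref{example:products}, and conclude $(\vec{X},X^*)\in\HL_{(\vec{r},s')}$ from Proposition~\ref{prop:HLlintomulti} since $t_j/r_j,\,t'/s'\in(1,\infty]$. Your additional endpoint bookkeeping (order-continuity of $L^t$ for $t\in[1,\infty)$, the conventions at $u_j=\infty$ and $s=\infty$) is accurate and only makes explicit what the paper leaves implicit.
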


\begin{proof}
By Proposition \ref{prop:iterateumd} it suffices to consider the case $K=1$ and write $\vec{t}^1 = \vec{t}$ and $\vec{r}^1=\vec{r}$.
Write $X_j=L^{t_j}(\Omega)$ so that $X=\prod_{j=1}^mX_j=L^t(\Omega)$ by Proposition~\ref{example:products}\ref{it:product1}. Note that since $\frac{t_1}{r_1},\ldots,\frac{t_m}{r_m},\frac{t'}{s'}\in(1,\infty]$, we have $X_j^{r_j}=L^{\frac{t_j}{r_j}}(\Omega)\in\HL$ for all $j\in\{1,\ldots,m\}$ and $(X^\ast)^{s'}=L^{\frac{t'}{s'}}(\Omega)\in\HL$. Thus, it follows from Proposition~\ref{prop:HLlintomulti} that $\vec{X}\in\UMD_{\vec{r},s}$. The assertion follows.
\end{proof}

The main interest in the above result is that we can go beyond assuming that each individual $X_j$ has the $\UMD$ property. We can even consider examples such as $\ell^\infty(\ell^2)$, which by \cite[Proposition 8.l]{NVW15} does not even satisfy the Hardy-Littlewood property.

\begin{remark}
By mimicking the proof of Proposition~\ref{prop:lebesgueumd} we can also obtain a version of Proposition~\ref{prop:lebesgueumd} for Lorentz and Orlicz spaces. We point out however that it is not clear if we can consider the appropriate endpoint cases outside of the range of $\UMD$ spaces. More precisely, in the case of Lorentz spaces it is unknown whether $L^{p,\infty}(\Omega)$ for $p\in(1,\infty)$ satisfies the Hardy-Littlewood property.
 Similarly it is unknown whether there are Orlicz spaces  that are not $\UMD$, but satisfy the Hardy-Littlewood property. If there are such spaces, we obtain more examples beyond the setting of individual $\UMD$ conditions that fall within our range.
\end{remark}

\section{Main results}\label{sec:proofs}
In this section we state and prove our main results. We will first show that scalar-valued sparse domination implies vector-valued sparse domination.
In view of Proposition~\ref{prop:sparsequiv}, Theorem~\ref{theorem:mainmultilinear} is a consequence of the following result. Note that we introduce the parameter $q$ into the theorem here, which is essential in obtaining the full range of vector-valued bounds, including the quasi-Banach range. We elaborate further on this in Section~\ref{sec:applymain}. Recall Convention \ref{conv:prod} for the definition of $p,r$ and $X$.
\begin{theorem}\label{thm:mainthm1}
Let $\vec{r}\in(0,\infty)^m$, $q\in(0,\infty)$, $s\in(q,\infty]$ and let $T$ be an operator defined on $m$-tuples of functions such that for any $\vec{f},g\in L_c^\infty(\R^d)$
\begin{equation}\label{eq:mainthmin}
\nrmb{T(\vec{f}\hspace{2pt})\cdot g}_{L^q(\R^d)}\leq C_T\nrmb{M_{(\vec{r},\frac{1}{\frac{1}{q}-\frac{1}{s}})}(\vec{f},g)}_{L^q(\R^d)}.
\end{equation}
Let $\vec{X}$ be and $m$-tuple of quasi-Banach function spaces over a measure space $(\Omega,\mu)$ such that $\vec{X}^q\in\UMD_{\frac{\vec{r}}{q},\frac{s}{q}}$. Furthermore suppose that for all simple functions $\vec{f}\in L^\infty_c(\R^d;\vec{X})$ the function $\widetilde{T}(\vec{f}\hspace{2pt}):\R^d \to X$ given by
  \begin{equation*}
    \widetilde{T}(\vec{f}\hspace{2pt})(x,\omega):= T(\vec{f}(\cdot,\omega))(x), \qquad (x,\omega) \in \R^d \times \Omega
  \end{equation*}
  is well-defined and strongly measurable. Then for all simple functions $\vec{f}\in L^\infty_c(\R^d;\vec{X})$ and  $g\in L^\infty_c(\R^d)$
\begin{equation}\label{eq:mainthmout}
\nrmb{\|\widetilde{T}(\vec{f}\hspace{2pt})\|_X\cdot g}_{L^q(\R^d)}\lesssim_{\vec{X},q,\vec{r},s} C_T\nrmb{M_{(\vec{r},\frac{1}{\frac{1}{q}-\frac{1}{s}})}(\|\vec{f}\|_{\vec{X}},g)}_{L^q(\R^d)}.
\end{equation}
\end{theorem}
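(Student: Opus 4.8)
The plan is to reduce the vector-valued bound to the scalar-valued hypothesis \eqref{eq:mainthmin} by dualizing in $X$ (to push the $X$-norm inside an integral over $\Omega$) and then using the sparse domination of the multisublinear lattice maximal operator $\widetilde{M}_{\vec{r},s'}$ that is available under the $\UMD_{\vec{r}/q,s/q}$ hypothesis. Concretely, by Proposition~\ref{prop:vvsdequivalence} it suffices to prove the dual form \ref{it:vvsdeq1}, i.e. for all $g\in L^\infty_c(\R^d;((X^q)^\ast)^{1/q})$ one has
\[
\nrmb{\widetilde{T}(\vec{f}\hspace{2pt})\cdot g}_{L^q(\R^d;L^q(\Omega))}\lesssim_{\vec{X},q,\vec{r},s}C_T\nrmb{M_{(\vec{r},\frac{1}{\frac{1}{q}-\frac{1}{s}})}(\|\vec{f}\|_{\vec{X}},\|g\|_{((X^q)^\ast)^{1/q}})}_{L^q(\R^d)}.
\]
Here I use that $X$ is $q$-convex and order-continuous, which holds because $X^q=\prod_j X_j^q$ is a Banach function space by the $\UMD_{\vec{r}/q,s/q}$ assumption.

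First I would rescale to reduce to $q=1$: replacing $T(\vec{f}\hspace{2pt})$, $f_j$, $g$ by their $q$-th powers turns the hypothesis into the $q=1$ case of \eqref{eq:mainthmin} with exponents $\vec{r}/q$ and $s/q$, and the spaces $X_j$ by $X_j^q$, whose product $X^q$ is now a genuine Banach function space in $\UMD_{\vec{r}/q,s/q}$; the conclusion \eqref{eq:mainthmout} transforms correspondingly. So from now on assume $q=1$ and $X$ is an order-continuous Banach function space with $\vec{X}\in\UMD_{\vec{r},s}$, i.e. $(\vec{X},X^\ast)\in\HL_{(\vec{r},s')}$. Next, using Fubini and the scalar hypothesis applied slicewise in $\omega$: for fixed $h\in L^\infty(\R^d;((X)^\ast))$ of norm one and $g\in L^\infty_c(\R^d)$,
\[
\absb{\ipb{\widetilde{T}(\vec{f}\hspace{2pt}),gh}}\le\int_\Omega\nrmb{T(\vec{f}(\cdot,\omega)\hspace{2pt})\cdot g(\cdot)h(\cdot,\omega)}_{L^1(\R^d)}\dd\mu(\omega)\le C_T\int_\Omega\nrmb{M_{(\vec{r},s')}(\vec{f}(\cdot,\omega),g\,h(\cdot,\omega))}_{L^1(\R^d)}\dd\mu(\omega).
\]
The right side equals $C_T\nrmb{\widetilde{M}^{\ms D}_{(\vec r,s')}(\vec f, gh)}_{L^1(\R^d;L^1(\Omega))}$ if the maximal function is taken in the lattice sense (one must be slightly careful: the slicewise maximal functions dominate the lattice maximal function pointwise, which is exactly what is needed and follows since $\nrm{\cdot}_{L^1(\Omega)}$ of a sup dominates the $L^1(\Omega)$-norm of the lattice-sup — this is the one spot requiring care).

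Then apply Theorem~\ref{theorem:sparsedomHL} to the tuple $(\vec X, X^\ast)\in\HL_{(\vec r,s')}$ with the trivial exponent on the last slot: $\widetilde M_{(\vec r,s')}$ is dominated in $L^1(\R^d;L^1(\Omega))$-norm by $M_{(\vec r,s')}(\|\vec f\|_{\vec X},\|gh\|_{X^\ast})$, with implicit constant depending only on $\vec X,\vec r,s$. Since $\|gh\|_{X^\ast}\le |g|\,\|h\|_{X^\ast}\le|g|$ pointwise (as $\|h\|_{L^\infty(\R^d;X^\ast)}=1$), monotonicity of $M_{(\vec r,s')}$ gives the bound by $M_{(\vec r,s')}(\|\vec f\|_{\vec X},g)$. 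Taking the supremum over all such $h$ and invoking the duality identity \eqref{eq:vvsdequiv1} (i.e. the $q=1$ case of Proposition~\ref{prop:vvsdequivalence}, \ref{it:vvsdeq1}$\Rightarrow$\ref{it:vvsdeq2}) recovers $\nrmb{\|\widetilde T(\vec f\hspace{2pt})\|_X\cdot g}_{L^1(\R^d)}$ on the left, which is \eqref{eq:mainthmout}. Undoing the rescaling $q\mapsto 1$ finishes the proof.

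The main obstacle I anticipate is the measurability and Fubini bookkeeping: one must ensure $\widetilde{M}^{\ms D}_{(\vec r,s')}(\vec f,gh)$ is jointly measurable on $\R^d\times\Omega$ and that the lattice supremum genuinely agrees (a.e.) with the pointwise-in-$\omega$ supremum so that Fubini applies — this is handled by first working with a finite subcollection $\mc D\subseteq\ms D$, for which the supremum is a finite maximum, then passing to the limit via the Fatou property of $X^\ast$ and the three-lattice lemma, exactly as in the proof of Theorem~\ref{theorem:sparsedomHL}. The rest is a routine chain of inequalities; the genuinely new input is simply combining the slicewise scalar hypothesis with the vector-valued sparse bound for the lattice maximal operator supplied by the $\UMD_{\vec r,s}$ assumption.
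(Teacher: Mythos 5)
Your proposal is correct and follows essentially the same route as the paper's proof: apply the scalar hypothesis slicewise in $\omega$ via Fubini, identify the result with the lattice maximal function $\widetilde{M}_{(\vec{r},\frac{1}{\frac{1}{q}-\frac{1}{s}})}$, dominate it using Theorem~\ref{theorem:sparsedomHL} for the $(m+1)$-tuple $(\vec{X},((X^q)^\ast)^{1/q})$ whose product is $L^q(\Omega)$, and conclude via the duality equivalence of Proposition~\ref{prop:vvsdequivalence}. The only (harmless) cosmetic difference is your preliminary rescaling to $q=1$, which the paper avoids by working with general $q$ directly.
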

Note that if $T$ is $m$-linear, then $\widetilde{T}(\vec{f}\hspace{2pt})$  is always well-defined and strongly measurable for simple functions $\vec{f}\in L^\infty_c(\R^d;\vec{X})$ through the tensor extension.

\begin{proof}
The proof essentially consists of applying Fubini's Theorem twice and then using the vector-valued sparse domination result for the multisublinear maximal operator. Let $\vec{f}\in L^\infty_c(\R^d;\vec{X})$ and  $g\in L^\infty_c(\R^d;((X^q)^\ast)^{\frac{1}{q}})$ be simple. Then for a.e. $\omega\in\Omega$ we have $f_j(\cdot,\omega),g(\cdot,\omega)\in L^\infty_c(\R^d)$. Thus, using Fubini's Theorem and \eqref{eq:mainthmin}, we have
\begin{equation}\label{eq:fubinitwice}
\begin{split}
\nrmb{\widetilde{T}(\vec{f}\hspace{2pt})\cdot g}_{L^q(\R^d;L^q(\Omega))}&=\nrmb{\omega\mapsto\|T(\vec{f} (\cdot,\omega),g(\cdot,\omega)\|_{L^q(\R^d)}}_{L^q(\Omega)}\\
&\leq C_T\nrmb{\omega\mapsto\|M_{(\vec{r},\frac{1}{\frac{1}{q}-\frac{1}{s}})}(\vec{f}(\cdot,\omega), g(\cdot,\omega))\|_{L^q(\R^d)}}_{L^q(\Omega)}\\
&=C_T\nrmb{\widetilde{M}_{(\vec{r},\frac{1}{\frac{1}{q}-\frac{1}{s}})}(\vec{f},g)}_{L^q(\R^d;L^q(\Omega))}.
\end{split}
\end{equation}
We set $X_{m+1}:=((X^q)^\ast)^{\frac{1}{q}}$ so that
\[
\prod_{j=1}^{m+1}X_j=(X^q\cdot(X^q)^\ast)^{\frac{1}{q}}=L^1(\Omega)^{\frac{1}{q}}=L^q(\Omega),
\]
which is an order-continuous $q$-convex quasi-Banach function space. Then it follows from the sparse domination result in Theorem~\ref{theorem:sparsedomHL} that
\[
\nrmb{\widetilde{M}_{(\vec{r},\frac{1}{\frac{1}{q}-\frac{1}{s}})}(\vec{f},g)}_{L^q(\R^d;L^q(\Omega))} \lesssim_{\vec{X},q,\vec{r},s}\nrmb{M_{(\vec{r},\frac{1}{\frac{1}{q}-\frac{1}{s}})}\hab{ \nrm{\vec{f}}_{\vec{X}},\|g\|_{((X^q)^\ast)^{\frac{1}{q}}}}}_{L^q(\R^d)}.
\]
By combining this with \eqref{eq:fubinitwice} and Proposition~\ref{prop:vvsdequivalence}, the assertion follows.
\end{proof}

We will now use Theorem \ref{thm:mainthm1} to deduce weighted boundedness for the vector-valued extension of an operator $T$ from a scalar-valued sparse domination result for $T$, which is new even in the unweighted setting. In view of Proposition~\ref{prop:sparsequiv}, Corollary~\ref{corollary:weights1} is a consequence of the following result with $q=1$.
\begin{theorem}\label{thm:weightcor}
Let $\vec{r}\in(0,\infty)^m$, $q\in(0,\infty)$, $s\in(q,\infty]$ and let $T$ an $m$-linear or positive-valued $m$-sublinear operator satisfying \eqref{eq:mainthmin} and let $\vec{X}$ satisfy the assumptions in Theorem~\ref{thm:mainthm1}. Then for all $p \in (0,\infty]^m$ with $\vec{p}>\vec{r}$ and $p<s$, and all $\vec{w}\in A_{\vec{p},(\vec{r},s)}$ we have
\[
\nrmb{\widetilde{T}(\vec{f}\hspace{2pt})}_{L_w^p(\R^d;X)}\lesssim_{\vec{X},\vec{p},q,\vec{r},s} C_T[\vec{w}]_{\vec{p},(\vec{r},s)}^{ \max\left\{\frac{\frac{1}{\vec{r}}}{\frac{1}{\vec{r}}-\frac{1}{\vec{p}}},\frac{\frac{1}{q}-\frac{1}{s}}{\frac{1}{p}-\frac{1}{s}}\right\}}\prod_{j=1}^m\|f_j\|_{L^{p_j}_{w_j}(\R^d;X_j)}
\]
for all $\vec{f}\in L^{\vec{p}}_{\vec{w}}(\R^d;\vec{X})$.
\end{theorem}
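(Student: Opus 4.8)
The plan is to recognize Theorem~\ref{thm:weightcor} as the synthesis of the two results already in place: the vector-valued sparse domination of Theorem~\ref{thm:mainthm1}, and the passage from such a domination to quantitative weighted bounds carried out in Proposition~\ref{prop:wfromsdscalar}. Concretely, I would check that $\widetilde{T}$ satisfies the hypotheses of Proposition~\ref{prop:wfromsdscalar} and then read off its conclusion, which is already stated there in exactly the desired form.

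The verification splits into three small observations. First, since $\vec{X}^q\in\UMD_{\frac{\vec{r}}{q},\frac{s}{q}}$, the definition of that $\UMD$ class asserts that $X^q=\prod_{j=1}^m X_j^q$ is an order-continuous Banach function space; equivalently, $X=\prod_{j=1}^m X_j$ is $q$-convex and order-continuous, which is precisely the structural assumption on $\vec{X}$ needed in Proposition~\ref{prop:wfromsdscalar}. Second, because $T$ is $m$-linear (respectively positive-valued $m$-sublinear) and $\widetilde{T}(\vec{f}\hspace{2pt})(x,\omega)=T(\vec{f}(\cdot,\omega))(x)$ is defined by a pointwise-in-$(x,\omega)$ substitution, $\widetilde{T}$ is again $m$-linear (respectively $m$-sublinear), and by assumption it is defined on all simple functions in $L^\infty_c(\R^d;\vec{X})$. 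Third, Theorem~\ref{thm:mainthm1} applies verbatim under the present hypotheses, and its conclusion \eqref{eq:mainthmout} is exactly the input estimate \eqref{eq:scalarwpropin} of Proposition~\ref{prop:wfromsdscalar}, with the constant there taken to be $C_{\vec{X},q,\vec{r},s}\,C_T$.

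With these checks done I would simply invoke Proposition~\ref{prop:wfromsdscalar}: for every $\vec{p}\in(0,\infty]^m$ with $\vec{p}>\vec{r}$ and $p<s$ and every $\vec{w}\in A_{\vec{p},(\vec{r},s)}$ it yields the extension of $\widetilde{T}$ obeying
\[
\nrmb{\widetilde{T}(\vec{f}\hspace{2pt})}_{L_w^p(\R^d;X)}\lesssim_{\vec{p},q,\vec{r},s} C_{\vec{X},q,\vec{r},s}\,C_T\,[\vec{w}]_{\vec{p},(\vec{r},s)}^{\max\left\{\frac{\frac{1}{\vec{r}}}{\frac{1}{\vec{r}}-\frac{1}{\vec{p}}},\frac{\frac{1}{q}-\frac{1}{s}}{\frac{1}{p}-\frac{1}{s}}\right\}}\prod_{j=1}^m\|f_j\|_{L^{p_j}_{w_j}(\R^d;X_j)},
\]
and absorbing $C_{\vec{X},q,\vec{r},s}$ into the implicit constant gives precisely the asserted bound.

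At the level of this theorem there is essentially no obstacle: all the analytic difficulty has been packaged into Theorem~\ref{thm:mainthm1} (a double Fubini argument plus sparse domination of the multisublinear lattice maximal operator) and into Proposition~\ref{prop:wfromsdscalar} (weighted boundedness of $M_{(\vec{r},\cdot)}$ together with the multilinear extrapolation of Theorem~\ref{thm:qextrapolation}, which is what upgrades the ``dualizable'' range of exponents to the full range claimed here). The only points that merit a sentence of care are the identification of ``$X^q$ order-continuous Banach function space'' with ``$X$ $q$-convex and order-continuous'', and the fact that the operator produced by Proposition~\ref{prop:wfromsdscalar} --- via the density of simple functions (Lemma~\ref{lem:dommultdense}) and the extension lemma (Lemma~\ref{lem:extlem}) --- genuinely agrees with $\widetilde{T}$ where both are defined; both are routine given the cited statements.
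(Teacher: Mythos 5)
Your proposal is correct and follows exactly the paper's route: the published proof is literally the one-line combination of Theorem~\ref{thm:mainthm1} with Proposition~\ref{prop:wfromsdscalar}, and your extra verifications (that $\vec{X}^q\in\UMD_{\frac{\vec{r}}{q},\frac{s}{q}}$ gives $q$-convexity and order-continuity of $X$, that $\widetilde{T}$ inherits $m$-(sub)linearity, and that \eqref{eq:mainthmout} is precisely the hypothesis \eqref{eq:scalarwpropin}) are the details the paper leaves implicit.
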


\begin{proof}
This follows from combining Theorem~\ref{thm:mainthm1} with Proposition~\ref{prop:wfromsdscalar}.
\end{proof}
\begin{remark}
As a consequence of the fact that the sparse domination of the lattice multisublinear maximal operator also holds in spaces of homogeneous type (see Remark~\ref{rem:sht}), the results in this section also hold in spaces of homogeneous type.
\end{remark}
\section{Applications}\label{sec:applymain}
In this section we provide a discussion regarding obtaining vector-valued estimates beyond the Banach range. Furthermore, we provide applications of our result specifically to multilinear Calder\'on-Zygmund operator and the bilinear Hilbert transform. We point out that our results are applicable far beyond these examples as they include all operators satisfying sparse domination, but we restrict ourselves to these examples as they highlight the utility of our techniques in various settings.
\subsection{Vector-valued estimates in the quasi-Banach range}
In the multilinear setting it is a natural occurrence that an operator maps into a Lebesgue space with exponents smaller than $1$ and hence, no longer in the Banach range. For this reason one also expects the vector-valued extensions of the operator to map into spaces in the quasi-Banach range. However, in our multilinear $\UMD$ condition we assume that the product of the spaces in a tuple is a Banach space. This is partly because we are obtaining our estimates after a dualization argument which is usually possible in the quasi-Banach setting. Thanks to the quantitative extrapolation result \cite{Ni19} this does not hinder us in obtaining sharp bounds in the full range of exponents, but we are still hindered in how much convexity we are allowed to assume on our spaces.

In this subsection we explain how the parameter $q$ in our main theorems can be used to recover the expected results in the quasi-Banach range, at the cost of a worse exponent in the weighted estimate. We illustrate this in the following proposition:
\begin{proposition}\label{prop:weightsellt}
Let $\vec{r}\in(0,\infty)^m$, $q_0\in(0,\infty)$, and let $T$ be an $m$-linear operator. Suppose that for any $f_1,\cdots,f_m \in L^\infty_c(\R^d)$ there exists a sparse collection $\mc{S}$ such that
\begin{equation}\label{eq:pointwisesparseq}
   \abs{T(\vec{f}\hspace{2pt})}\leq C_T\, \has{\sum_{Q\in \mc{S}}\has{ \prod_{j=1}^m\ip{f_j}_{r_j,Q}^{q_0}} \ind_Q}^{1/q_0}.
\end{equation}
Then for all $\vec{p}\in(0,\infty]^m$, $\vec{t}\in(0,\infty]^m$ with $\vec{r}< \vec{p},\vec{t}$ and $p,t<\infty$ and all $\vec{w}\in A_{\vec{p},(\vec{r},\infty)}$ the tensor extension $\widetilde{T}$ of $T$ satisfies
\begin{equation}\label{eq:pointwisesparsecons}
\nrmb{\widetilde{T}}_{L_{w_1}^{p_1}(\R^d;\ell^{t_1})\times\cdots\times L_{w_m}^{p_m}(\R^d;\ell^{t_m})\to L_w^p(\R^d;\ell^t)}\lesssim_{\vec{p},q_0,\vec{r},\vec{t}} C_T[\vec{w}]_{\vec{p},(\vec{r},\infty)}^\gamma,
\end{equation}
with
\[
\gamma=\begin{cases} \max\left\{\frac{\frac{1}{\vec{r}}}{\frac{1}{\vec{r}}-\frac{1}{\vec{p}}},\frac{\frac{1}{q_0}}{\frac{1}{p}}\right\} &\text{if $t\in[q_0,\infty)$;}\\[0.5cm]
\max\left\{\frac{\frac{1}{\vec{r}}}{\frac{1}{\vec{r}}-\frac{1}{\vec{p}}},\frac{\frac{1}{t}}{\frac{1}{p}}\right\}  &\text{if $t\in(r,q_0]$.}
\end{cases}
\]
\end{proposition}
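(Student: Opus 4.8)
The plan is to feed a suitably downgraded version of the hypothesis into Theorem~\ref{thm:weightcor}, taking there the parameter $s=\infty$ and the parameter $q$ equal to $q:=\min\{t,q_0\}$. With this choice one has $q=q_0$ when $t\in[q_0,\infty)$ and $q=t$ when $t\in(r,q_0]$, so that in both cases the value announced for $\gamma$ is exactly $\max\left\{\frac{\frac{1}{\vec{r}}}{\frac{1}{\vec{r}}-\frac{1}{\vec{p}}},\frac{\frac{1}{q}}{\frac{1}{p}}\right\}$, which is precisely the exponent that Theorem~\ref{thm:weightcor} produces when applied with the triple $(\vec{r},q,\infty)$ (note that $\frac{1}{\frac{1}{q}-\frac{1}{\infty}}=q$, so that $\frac{\frac{1}{q}-\frac{1}{\infty}}{\frac{1}{p}-\frac{1}{\infty}}=\frac{\frac{1}{q}}{\frac{1}{p}}$ there).

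First I would downgrade the exponent in the pointwise estimate from $q_0$ to $q$. Since $q\le q_0$, one has $\big(\sum_Q a_Q^{q_0}\big)^{1/q_0}\le\big(\sum_Q a_Q^{q}\big)^{1/q}$ for any nonnegative sequence $(a_Q)$, and applying this with $a_Q=\prod_{j=1}^m\ip{f_j}_{r_j,Q}\ind_Q$ turns \eqref{eq:pointwisesparseq} into the same pointwise sparse bound with $q_0$ replaced by $q$. Proposition~\ref{prop:ptwisetoform}, used with the exponent $q$, then gives $\|T(\vec{f}\hspace{2pt})\cdot g\|_{L^q(\R^d)}\lesssim_q C_T\,\|M_{(\vec{r},q)}(\vec{f},g)\|_{L^q(\R^d)}$ for all $\vec{f},g\in L^\infty_c(\R^d)$; this is exactly hypothesis \eqref{eq:mainthmin} of Theorem~\ref{thm:mainthm1} for the parameters $(\vec{r},q,\infty)$, the multiplicative constant $C_q$ being harmlessly absorbed in the final estimate.

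Next I would verify the multilinear $\UMD$ condition required by Theorem~\ref{thm:mainthm1}, namely $\vec{X}^{\,q}\in\UMD_{\vec{r}/q,\infty}$ for the tuple $\vec{X}:=(\ell^{t_1},\dots,\ell^{t_m})$. By Proposition~\ref{example:products}\ref{it:product1} the product $X=\prod_{j=1}^m\ell^{t_j}$ equals $\ell^{t}$, and since $q\le t<\infty$ the $q$-concavification $X^{q}=\ell^{t/q}$ is an order-continuous Banach function space; by definition of the class $\UMD_{\vec{r}/q,\infty}$ it then remains to check that $(\ell^{t_1/q},\dots,\ell^{t_m/q},\ell^{(t/q)'})\in\HL_{(\vec{r}/q,1)}$. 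By Proposition~\ref{prop:HLlintomulti} this follows once $(\ell^{t_j/q})^{r_j/q}=\ell^{t_j/r_j}\in\HL$ for each $j$ and $\ell^{(t/q)'}\in\HL$; the former holds because $t_j/r_j\in(1,\infty]$ (as $\vec{t}>\vec{r}$) and the latter because $(t/q)'\in(1,\infty]$ (as $t/q\in[1,\infty)$), using that $\ell^{p}\in\HL$ for every $p\in(1,\infty]$.

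Finally, since $T$ is $m$-linear its tensor extension $\widetilde{T}$ is automatically well-defined and strongly measurable on simple functions in $L^\infty_c(\R^d;\vec{X})$, so Theorem~\ref{thm:weightcor} applies with $(\vec{r},q,\infty)$ and the tuple $\vec{X}$ and, on recalling $X=\ell^t$ and $X_j=\ell^{t_j}$, yields precisely \eqref{eq:pointwisesparsecons} with the exponent $\gamma$ identified in the first paragraph; substituting $q=q_0$ respectively $q=t$ then separates the two cases of the statement. I do not expect any serious obstacle here: the one point that needs (minor) care is that passing to the $q$-concavification preserves the Banach-space and order-continuity structure underlying the $\UMD$ condition, which is exactly what forces the restriction $q\le t$ and is the source of the degradation of $\gamma$ in the regime $t<q_0$; everything else is the bookkeeping of the two cases.
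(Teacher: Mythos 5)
Your proof is correct and follows essentially the same route as the paper: downgrade the pointwise sparse bound from $q_0$ to $q=\min\{t,q_0\}$ via the embedding $\ell^{q}\hookrightarrow\ell^{q_0}$, then apply Proposition~\ref{prop:ptwisetoform} and Theorem~\ref{thm:weightcor} with $s=\infty$. The only cosmetic difference is that you verify $\vec{X}^{q}\in\UMD_{\vec{r}/q,\infty}$ directly through Proposition~\ref{prop:HLlintomulti}, whereas the paper cites Proposition~\ref{prop:lebesgueumd}, which rests on the same fact.
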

The above result also holds for $m$-sublinear $T$ as in Theorem~\ref{thm:mainthm1}. Of course, our methods go much beyond the setting of $\ell^t$-spaces, but we restrict our attention to this particular case for now for the sake of clarity and for the sake of comparing our result to previous results in the literature.
\begin{proof}
Write $\vec{X}=(\ell^{t_1},\ldots,\ell^{t_m})$. We consider the two cases separately.

For the case $t\in[q_0,\infty)$, we note that $\vec{X}^{q_0}=(\ell^{\frac{t_1}{q_0}},\ldots,\ell^{\frac{t_m}{q_0}})\in\UMD_{\frac{\vec{r}}{q_0},\infty}$ by Proposition~\ref{prop:lebesgueumd}. Thus, the result follows from an application of Proposition~\ref{prop:ptwisetoform} and  Theorem~\ref{thm:weightcor} with $q=q_0$ and $s=\infty$.

In the other case $t\in(r,q_0]$ we have $\vec{X}^t=(\ell^{\frac{t_1}{t}},\ldots,\ell^{\frac{t_m}{t}})\in\UMD_{\frac{\vec{r}}{t},\infty}$ by Proposition~\ref{prop:lebesgueumd}. Note that the inequality $\|\cdot\|_{\ell^{q_0}}\leq\|\cdot\|_{\ell^t}$ implies that \eqref{eq:pointwisesparseq} holds with $q_0$ replaced by $t$. Hence, an application of Proposition~\ref{prop:ptwisetoform} and  Theorem~\ref{thm:weightcor} with $q=t$ and $s=\infty$ proves the result.
\end{proof}

Note that in case $t\in[q_0,\infty)$  we did not need to assume the pointwise sparse domination \eqref{eq:pointwisesparseq} in our proof, but it would have sufficed to assume domination in form. For example, if we instead assumed that for an $s \in (q_0,\infty]$ and any $\vec{f},g \in L^\infty_c(\R^d)$ there exists a sparse collection $\mc{S}$ such that
\begin{equation}\label{eq:formsparseq}
\nrmb{T(\vec{f}\hspace{2pt})\cdot g}_{L^{q_0}(\R^d)}\leq C_T \has{\sum_{Q\in\mc{S}}\Big(\prod_{j=1}^m\langle f_j\rangle_{r_j,Q}\Big)^{q_0}\langle g\rangle^{q_0}_{\frac{1}{\frac{1}{q_0}-\frac{1}{s}},Q}|Q|}^{\frac{1}{q_0}},
\end{equation}
then exactly as in the proof we obtain \eqref{eq:pointwisesparsecons} for $t\in[q_0,\infty)$ with
\begin{equation}\label{eq:gamma}
\gamma=\max\left\{\frac{\frac{1}{\vec{r}}}{\frac{1}{\vec{r}}-\frac{1}{\vec{p}}},\frac{\frac{1}{q_0}-\frac{1}{s}}{\frac{1}{p}-\frac{1}{s}}\right\}.
\end{equation}
However, at this point it is not clear how to deal with the cases $t\in(r,q_0]$. In the case of \eqref{eq:pointwisesparseq} we can simply apply the estimate $\|\cdot\|_{\ell^{q_0}}\leq\|\cdot\|_{\ell^t}$ to obtain the domination required to complete the argument. However, if we  only assume the sparse domination in form \eqref{eq:formsparseq}, it is unknown whether we automatically also have \eqref{eq:formsparseq} with $q_0$ replaced by a smaller exponent $0<q\leq q_0$, meaning that it is not clear whether we have the flexibility to cover the cases $t\in(r,q_0]$ or not without \textit{assuming}  that \eqref{eq:formsparseq} also holds with $q_0$ replaced by $t$.

We point out that replacing $q_0$ by $0<q\leq q_0$ qualitatively yields the same weighted bounds, but the result is quantitatively worse in that it yields a worse exponent $\gamma$ in the bound. Thus, on all accounts it seems that the following conjecture should hold:
\begin{conjecture}[Sparse form domination implies worse sparse form domination]\label{con:sdiwsd}
Let $\vec{r}\in(0,\infty)^m$, $q_0\in(0,\infty)$, and $s\in(q_0,\infty]$. Let $T$ be an operator defined on $m$-tuples of functions and suppose that for any $f_1,\cdots,f_m \in L^\infty_c(\R^d)$ there exists a sparse collection $\mc{S}$ such that
\[
\|T(\vec{f}\hspace{2pt})\cdot g\|_{L^{q_0}(\R^d)}\lesssim \has{\sum_{Q\in\mc{S}}\Big(\prod_{j=1}^m\langle f_j\rangle_{r_j,Q}\Big)^{q_0}\langle g\rangle^{q_0}_{\frac{1}{\frac{1}{q_0}-\frac{1}{s}},Q}|Q|}^{\frac{1}{q_0}}.
\]
Then the same estimate also holds when we replace $q_0$ by any $q\in(0,q_0]$.
\end{conjecture}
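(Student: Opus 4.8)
\textbf{Proof proposal for Conjecture~\ref{con:sdiwsd}.}
The plan is to route the argument through the correspondence between sparse forms and Muckenhoupt weights that pervades this paper. First I would translate the hypothesis: by Proposition~\ref{prop:sparsequiv} the assumed $q_0$-sparse form domination is equivalent to $\nrm{T(\vec{f}\hspace{2pt})\cdot g}_{L^{q_0}(\R^d)}\lesssim\nrmb{M_{(\vec{r},\frac{1}{\frac{1}{q_0}-\frac{1}{s}})}(\vec{f},g)}_{L^{q_0}(\R^d)}$, and the sought conclusion at any $q\in(0,q_0]$ is, again by Proposition~\ref{prop:sparsequiv}, equivalent to the same estimate with $q_0$ replaced by $q$. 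Assuming additionally (as in Proposition~\ref{prop:weightsellt}) that $T$ is $m$-(sub)linear, the scalar case of Proposition~\ref{prop:wfromsdscalar}, which internally invokes the quantitative extrapolation of Theorem~\ref{thm:qextrapolation}, upgrades the $q_0$-form into the full family of weighted bounds $\nrm{T(\vec{f}\hspace{2pt})}_{L^p_w(\R^d)}\lesssim[\vec{w}]_{\vec{p},(\vec{r},s)}^{\theta}\prod_{j=1}^m\nrm{f_j}_{L^{p_j}_{w_j}(\R^d)}$ for every $\vec{r}<\vec{p}$ with $p<s$ and every $\vec{w}\in A_{\vec{p},(\vec{r},s)}$. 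The remaining task is to run this correspondence backwards: to reconstruct from these weighted bounds a sparse form domination at the prescribed exponent $q$.

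For this backward step the plan is to adapt the standard sparse domination principle \`a la Lerner. One introduces a grand maximal truncation operator $\mc{M}_{T}$ measuring the local oscillation of $T$ on truncated inputs $\vec{f}\ind_{3Q}$; crucially, the hypothesis applies verbatim to such truncated tuples, which is what makes $\mc{M}_{T}$ (together with $T$ itself) amenable to estimation without any kernel information. One then extracts from the weighted family above---specialized to power weights and passed to the lower endpoint $\vec{p}\downarrow\vec{r}$---a weak-type estimate calibrated to the exponent $q$ and to the $g$-average $\ipb{g}_{\frac{1}{\frac{1}{q}-\frac{1}{s}},Q}$, and finally feeds this into the pointwise sparse bound. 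The output is the $q$-sparse form, at the cost of a worse constant, consistent with the remark preceding Conjecture~\ref{con:sdiwsd}.

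The hard part is this backward step, and I do not expect it to go through softly. The classes $A_{\vec{p},(\vec{r},s)}$ degenerate as $\vec{p}\downarrow\vec{r}$, so the family of strong-type weighted bounds does not by itself record the weak endpoint that the sparse domination principle requires; that endpoint information is in fact present in the $q_0$-form (applied to truncated inputs), but transferring it to exponent $q$ while \emph{tightening} the $g$-average from exponent $\frac{1}{\frac{1}{q_0}-\frac{1}{s}}$ to the strictly smaller $\frac{1}{\frac{1}{q}-\frac{1}{s}}$---the direction in which Jensen's inequality is unhelpful---is precisely the delicate point, and seems to demand genuine use of the boundedness of $T$ rather than a formal manipulation of the dominating form. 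A secondary difficulty is that Conjecture~\ref{con:sdiwsd} is stated for an arbitrary operator on $m$-tuples, whereas the route above uses $m$-(sub)linearity through Proposition~\ref{prop:wfromsdscalar} and Theorem~\ref{thm:qextrapolation}; a structure-free proof would presumably have to manipulate the sparse collection $\mc{S}$ directly at the level of the form, perhaps by iterating the hypothesis recursively on truncated pieces, for which no clean mechanism is apparent. These are the reasons the statement is recorded as a conjecture.
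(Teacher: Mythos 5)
The statement you were asked about is not proved in the paper at all: it is recorded as Conjecture~\ref{con:sdiwsd}, and the surrounding discussion explicitly notes that even the simplest case $m=1$, $r=1$, $q_0=1$, $s=\infty$ is open. So there is no proof to compare against, and your proposal is correctly calibrated in that it does not claim to close the problem: the obstruction you single out (tightening the $g$-average from exponent $\frac{1}{\frac{1}{q_0}-\frac{1}{s}}$ to the smaller exponent $\frac{1}{\frac{1}{q}-\frac{1}{s}}$, against the direction of H\"older/Jensen, with no retained endpoint information) is precisely why the statement is left as a conjecture.

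Concerning the route you sketch, the gap is real and worth naming precisely. The forward half (Proposition~\ref{prop:sparsequiv} followed by Proposition~\ref{prop:wfromsdscalar} and Theorem~\ref{thm:qextrapolation}) only yields strong-type weighted bounds in the open range $\vec{r}<\vec{p}$, $p<s$, and this family is, as far as is known, strictly weaker data than the sparse form: reconstructing sparse domination from weighted bounds is exactly the kind of reverse implication (arrow (3) versus arrow (1) in the introduction's diagram) that is not available in this generality, since the $A_{\vec{p},(\vec{r},s)}$ constants degenerate as $\vec{p}\downarrow\vec{r}$ and no weak-type endpoint for $T$ or for a grand maximal truncation $\mc{M}_T$ survives the extrapolation step. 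Moreover, the conjecture is stated for an arbitrary operator on $m$-tuples, whereas your forward half already needs $m$-(sub)linearity to even define the extension and run Proposition~\ref{prop:wfromsdscalar}. The only instances in which the conjecture is known are operator-specific, obtained by revisiting the proof of the $q_0$-sparse form for the particular $T$ and inserting $\|\cdot\|_{\ell^{q}}\leq\|\cdot\|_{\ell^{q_0}}$ at the appropriate point (the localized $\ell^r$-estimates of \cite{Lo19b}, rough homogeneous singular integrals \cite{CCDO17,CLRT19}, and the bilinear Hilbert transform \cite{BM17}); your plan is consistent with that state of affairs but does not supply the missing mechanism, so the statement remains open.
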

We point out that even the simplest case $m=1$, $r=1$, $q_0=1$, $s=\infty$ is unknown.
For specific cases of $T$ one can usually verify the conjecture by going back to the proof of \eqref{eq:formsparseq} and insert the estimate $\nrm{\cdot}_{\ell^q} \leq \nrm{\cdot}_{\ell^{q_0}}$ at the right place in the proof. Examples where this is the case include:
\begin{itemize}
  \item In \cite[Theorem 3.5]{Lo19b} a general theorem to obtain sparse domination for an operator $T$ is shown. In this theorem  a \emph{localized $\ell^r$-estimate} is imposed on $T$ to deduce \eqref{eq:formsparseq} with $q_0=r$. The localized $\ell^r$-estimate for $T$ becomes weaker for smaller $r$, so \cite[Theorem 3.5]{Lo19b} also yields the result of Conjecture~\ref{con:sdiwsd}.
  \item One of the main results in \cite{CCDO17} is \eqref{eq:formsparseq} with $q_0 = 1$ for rough homogeneous singular operators $T_\Omega$, see also \cite{Le19} for an alternative proof. Adapting the technique in \cite{Le19}, Conjecture~\ref{con:sdiwsd} was verified for these operators in \cite[Theorem 5.1]{CLRT19}, which has implications for weighted norm inequalities for $T_\Omega$ with so-called $C_p$-weights.
\end{itemize}

To conclude this subsection, we wish to compare our result with the result \cite[Corollary 4.6]{Ni19}. Let $\vec{r}\in(0,\infty)^m$, $s\in(1,\infty]$, and let $T$ be an operator defined on $m$-tuples of functions such that for any $\vec{f} \in L^\infty_c(\R^d)$ there exists a sparse collection $\mc{S}$ such that
\[
\|T(\vec{f}\hspace{2pt})\cdot g\|_{L^1(\R^d)}\lesssim \has{\sum_{Q\in\mc{S}}\Big(\prod_{j=1}^m\langle f_j\rangle_{r_j,Q}\Big)\langle g\rangle_{s'},Q}|Q|.
\]
Then, by \cite[Corollary 4.6]{Ni19}, we find that for all exponents $\vec{p}\in(0,\infty]^m$, $\vec{t}\in(0,\infty]^m$ with $\vec{r}<\vec{p},\vec{t}$ and $p,t<s$ and all $\vec{w}\in A_{\vec{p},(\vec{r},s)}$ we have
\begin{equation*}
\nrm{\widetilde{T}}_{L_{w_1}^{p_1}(\ell^{t_1})\times\cdots\times L_{w_m}^{p_m}(\ell^{t_m})\to L_w^p(\ell^t)}\lesssim[\vec{w}]_{\vec{p},(\vec{r},s)}^{ \max\left\{\frac{\frac{1}{\vec{r}}}{\frac{1}{\vec{r}}-\frac{1}{\vec{t}}},\frac{1-\frac{1}{s}}{\frac{1}{t}-\frac{1}{s}}\right\}\cdot \max\left\{\frac{\frac{1}{\vec{r}}-\frac{1}{\vec{t}}}{\frac{1}{\vec{r}}-\frac{1}{\vec{p}}},\frac{\frac{1}{t}-\frac{1}{s}}{\frac{1}{p}-\frac{1}{s}}\right\}}.
\end{equation*}
Since
\[
\max\left\{\frac{\frac{1}{\vec{r}}}{\frac{1}{\vec{r}}-\frac{1}{\vec{p}}},\frac{1-\frac{1}{s}}{\frac{1}{p}-\frac{1}{s}}\right\}\leq\max\left\{\frac{\frac{1}{\vec{r}}}{\frac{1}{\vec{r}}-\frac{1}{\vec{t}}},\frac{1-\frac{1}{s}}{\frac{1}{t}-\frac{1}{s}}\right\}\cdot \max\left\{\frac{\frac{1}{\vec{r}}-\frac{1}{\vec{t}}}{\frac{1}{\vec{r}}-\frac{1}{\vec{p}}},\frac{\frac{1}{t}-\frac{1}{s}}{\frac{1}{p}-\frac{1}{s}}\right\},
\]
the exponent \eqref{eq:gamma} we obtain from our method improves this result in the Banach range $t\in[1,\infty)$. We point out that our method of improving this bound is exactly as was discussed in \cite[Remark 4.7]{Ni19}.
\subsection{Multilinear Calder\'on-Zygmund operators} A multilinear Calder\'on-Zygmund operators (see \cite{CR16, LN15} for the definition) satisfies the sparse domination \eqref{eq:mainthmin} for $q=1$ and $r_1=\cdots=r_m=1$, $s=\infty$, see \cite{DLP15, CR16, LN15}. Moreover, it is known that these operators have vector-valued extensions with respect to tuples $\vec{X}=(\ell^{t_1},\ldots,\ell^{t_m})$ where $\vec{t}\in(1,\infty]^m$ with $t\in(\frac{1}{m},\infty)$, see \cite{Ni19, LMMOV19}.
More precisely, multilinear Calder\'on-Zygmund operators $T$ satisfy an a.e. pointwise sparse domination result
\[
|T(\vec{f}\hspace{2pt})|\leq C_T\sum_{Q\in\mc{S}}\prod_{j=1}^m\langle f_j\rangle_{r_j,Q}\ind_Q.
\]
Hence, by applying Proposition~\ref{prop:weightsellt} with $q_0=1$ we find that for all $\vec{p}\in(0,\infty]^m$, $\vec{t}\in(1,\infty]^m$ with $p,t<\infty$ and all $\vec{w}\in A_{\vec{p},(\vec{1},\infty)}$ we have
\begin{equation*}
\nrmb{\widetilde{T}}_{L_{w_1}^{p_1}(\R^d;\ell^{t_1})\times\cdots\times L_{w_m}^{p_m}(\R^d;\ell^{t_m})\to L_w^p(\R^d;\ell^t)}\lesssim_{\vec{p},q_0,\vec{r},\vec{t}} C_T[\vec{w}]_{\vec{p},(\vec{1},\infty)}^\gamma
\end{equation*}
with
\[
\gamma=\begin{cases} \max\left\{p_1',\ldots,p_m',p\right\} &\text{if $t\in[1,\infty)$;}\\
\max\left\{p_1',\ldots,p_m',\frac{p}{t}\right\}  &\text{if $t\in(\frac{1}{m},1]$.}
\end{cases}
\]

As discussed in the previous subsection, in the case $t\in[1,\infty)$ our quantitative bound improves the previously known ones. As a matter of fact, it is sharp, since the bound is the same as the one in the scalar case.

By applying Proposition~\ref{prop:sparsequiv}, Proposition~\ref{prop:ptwisetoform}, Theorem~\ref{thm:mainthm1}, and Theorem~\ref{thm:weightcor}, the full result for the tensor extension $\widetilde{T}$ of an $m$-linear Calder\'on-Zygmund operator $T$ we obtain is as follows:
\begin{proposition}
Let $T$ be an $m$-linear Calder\'on-Zygmund operator. Let $\vec{X}$ be an $m$-tuple of quasi-Banach function spaces over $\Omega$ such that $\vec{X}^q\in\UMD_{\frac{\vec{1}}{q},\infty}$ for some  $q\in(0,1]$. Then for all simple functions $\vec{f}\in L^\infty_c(\R^d;\vec{X})$ and  $g\in L^\infty_c(\R^d)$, there exists a sparse collection of cubes $\mc{S}$ such that
\[
\nrmb{\|\widetilde{T}(\vec{f}\hspace{2pt})\|_X\cdot g}_{L^q(\R^d)}\lesssim_{\vec{X},q} C_T\has{\sum_{Q\in\mc{S}}\Big(\prod_{j=1}^m\langle \|f_j\|_{X_j}\rangle_{1,Q}\Big)^q\langle \||g|^q\|_{(X^q)^\ast}\rangle_{1,Q}|Q|}^{\frac{1}{q}}.
\]
Moreover, we have
\[
\|\widetilde{T}(\vec{f}\hspace{2pt})\|_{L_w^p(\R^d;X)}\lesssim_{\vec{X},\vec{p},q} C_T[\vec{w}]_{\vec{p},(\vec{1},\infty)}^{ \max\left\{p_1',\ldots,p_m',\frac{p}{q}\right\}}\prod_{j=1}^m\|f_j\|_{L^{p_j}_{w_j}(\R^d;X_j)}
\]
for all $\vec{p}\in(1,\infty]^m$ with $p<\infty$, all $\vec{w}\in A_{\vec{p},(\vec{1},\infty)}$, and all $\vec{f}\in L^{\vec{p}}_{\vec{w}}(\R^d;\vec{X})$.
\end{proposition}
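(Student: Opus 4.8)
The plan is to assemble the statement from the known pointwise sparse domination of multilinear Calder\'on--Zygmund operators together with the machinery of Sections~\ref{section:multiHL}--\ref{sec:proofs}; no new analytic ingredient is required. Recall that an $m$-linear Calder\'on--Zygmund operator $T$ satisfies, for every $\vec{f}\in L^\infty_c(\R^d)$, the pointwise bound
\[
|T(\vec{f}\hspace{2pt})|\leq C_T\sum_{Q\in\mc{S}}\Big(\prod_{j=1}^m\langle f_j\rangle_{1,Q}\Big)\ind_Q
\]
for a suitable sparse $\mc{S}=\mc{S}(\vec{f}\hspace{2pt})$ (see \cite{DLP15, CR16, LN15}). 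Since for $q\in(0,1]$ and nonnegative reals $(a_Q)_Q$ one has $\sum_Q a_Q\leq\big(\sum_Q a_Q^q\big)^{1/q}$, the same collection gives the pointwise sparse domination \eqref{eq:pointwisesparseq} with $r_1=\cdots=r_m=1$ and $q_0=q$. Plugging this into Proposition~\ref{prop:ptwisetoform} with $\vec{r}=\vec{1}$ yields
\[
\nrmb{T(\vec{f}\hspace{2pt})\cdot g}_{L^q(\R^d)}\lesssim_q C_T\,\nrmb{M_{(\vec{1},q)}(\vec{f},g)}_{L^q(\R^d)},\qquad g\in L^q_{\loc}(\R^d),
\]
and, since $\big(\tfrac1q-\tfrac1s\big)^{-1}=q$ when $s=\infty$, this is precisely hypothesis \eqref{eq:mainthmin} of Theorem~\ref{thm:mainthm1} with $\vec{r}=\vec{1}$ and $s=\infty$ (up to an absolute constant in front of $C_T$).

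I would then apply Theorem~\ref{thm:mainthm1} with $\vec{r}=\vec{1}$, $s=\infty$ and the given tuple $\vec{X}$: the hypothesis $\vec{X}^q\in\UMD_{\frac{\vec{1}}{q},\infty}$ is exactly $\vec{X}^q\in\UMD_{\frac{\vec{r}}{q},\frac{s}{q}}$ for these parameters, and for an $m$-linear operator the tensor extension $\widetilde{T}$ is automatically well-defined and strongly measurable on simple $\vec{X}$-valued functions, so the hypotheses of the theorem hold. This produces, for all simple $\vec{f}\in L^\infty_c(\R^d;\vec{X})$ and $g\in L^\infty_c(\R^d)$,
\[
\nrmb{\nrm{\widetilde{T}(\vec{f}\hspace{2pt})}_X\cdot g}_{L^q(\R^d)}\lesssim_{\vec{X},q} C_T\,\nrmb{M_{(\vec{1},q)}(\nrm{\vec{f}}_{\vec{X}},g)}_{L^q(\R^d)}.
\]
Rewriting the right-hand side with Proposition~\ref{prop:sparsequiv} (with $\vec{r}=\vec{1}$, $s=\infty$, using $\langle g\rangle_{q,Q}^q=\langle|g|^q\rangle_{1,Q}$) turns this into a sparse form estimate for some sparse $\mc{S}$; passing to the dual formulation in Proposition~\ref{prop:vvsdequivalence} (under which, via $\|g\|_{((X^q)^\ast)^{1/q}}^q=\||g|^q\|_{(X^q)^\ast}$, the local average $\langle|g|^q\rangle_{1,Q}$ becomes $\langle\||g|^q\|_{(X^q)^\ast}\rangle_{1,Q}$) gives the first displayed inequality of the proposition.

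For the weighted bound I would invoke Theorem~\ref{thm:weightcor} with $\vec{r}=\vec{1}$, $s=\infty$ and the same $q$ (again using that $T$ is $m$-linear and satisfies \eqref{eq:mainthmin}). It gives, for all $\vec{p}\in(1,\infty]^m$ with $p<\infty$ and all $\vec{w}\in A_{\vec{p},(\vec{1},\infty)}$,
\[
\nrmb{\widetilde{T}(\vec{f}\hspace{2pt})}_{L^p_w(\R^d;X)}\lesssim_{\vec{X},\vec{p},q} C_T\,[\vec{w}]_{\vec{p},(\vec{1},\infty)}^{\max\left\{\frac{\frac{1}{\vec{1}}}{\frac{1}{\vec{1}}-\frac{1}{\vec{p}}},\ \frac{\frac{1}{q}}{\frac{1}{p}}\right\}}\prod_{j=1}^m\nrm{f_j}_{L^{p_j}_{w_j}(\R^d;X_j)},
\]
and the exponent collapses to $\max\{p_1',\dots,p_m',p/q\}$ because $\frac{1}{1-1/p_j}=p_j'$ and $\frac{1/q}{1/p}=p/q$; the extension of this estimate from simple $\vec{f}$ to all of $L^{\vec{p}}_{\vec{w}}(\R^d;\vec{X})$, and the identification of $\widetilde{T}$ there with the $m$-linear tensor extension of $T$, are already contained in Theorem~\ref{thm:weightcor} via Lemmas~\ref{lem:extlem} and \ref{lem:dommultdense}.

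So the proof is essentially bookkeeping, and the only steps that deserve attention are minor. The first is the passage from $\ell^1$- to $\ell^q$-type pointwise sparse domination, which is exactly where the hypothesis $q\in(0,1]$ is genuinely used: Calder\'on--Zygmund operators come only with $\ell^1$-sparse domination, so to accommodate tuples $\vec{X}$ that are $q$-convex but not $1$-convex one must pay by lowering $q$, and hence by worsening the weight exponent to $p/q$. The second is verifying that the index choices $\vec{r}=\vec{1}$ and $s=\infty$ make the multisublinear maximal operators, the $\UMD_{\frac{\vec{r}}{q},\frac{s}{q}}$ condition, and the extrapolation exponent all line up with the formulas in the statement.
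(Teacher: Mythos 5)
Your argument is correct and is essentially the paper's own proof: the paper likewise passes from the known $\ell^1$-type pointwise sparse domination of multilinear Calder\'on--Zygmund operators to the $\ell^q$-type domination via $\|\cdot\|_{\ell^1}\leq\|\cdot\|_{\ell^q}$ for $q\in(0,1]$ (exactly as in the proof of Proposition~\ref{prop:weightsellt}), and then assembles the statement from Proposition~\ref{prop:ptwisetoform}, Theorem~\ref{thm:mainthm1}, Proposition~\ref{prop:sparsequiv} together with Proposition~\ref{prop:vvsdequivalence} for the dual bookkeeping, and Theorem~\ref{thm:weightcor} with $\vec{r}=\vec{1}$, $s=\infty$. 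Your exponent computation $\max\{p_1',\dots,p_m',p/q\}$ and the identification of the hypothesis $\vec{X}^q\in\UMD_{\frac{\vec{1}}{q},\infty}$ also match the paper's route.
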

To optimize the weighted bound, for each tuple of spaces $\vec{X}$ one should determine the largest $q\in(0,1]$ such that $\vec{X}^q\in\UMD_{\frac{\vec{1}}{q},\infty}$. For $q=1$ our bound coincides with the known sharp bound in the scalar case, so in this case our bound is optimal.

Finally, we point out that by Proposition~\ref{proposition:comparisonoldassumption} we recover the results obtained in \cite[Theorem 5.2]{LN19} where each $X_j$ was assumed to be a $\UMD$ space. In fact, we improve upon these results both in that our new bounds are quantitative as well as that we are able to handle more $m$-tuples of spaces.
In conclusion, our result recovers the full known range of vector-valued extensions of multilinear Calder\'on-Zygmund operators as well as prove new ones with new sharp weighted bounds.

\begin{remark}
  In the linear case $m=1$, the sharpness of the $T(b)$ theorem in \cite{NTV02} enabled Hyt\"onen in \cite[Theorem 3]{Hy14} to prove boundedness of the tensor extension of a Calder\'on--Zygmund operator $T$ on $L^p(\R^d;X)$ for general $\UMD$ Banach spaces $X$ from scalar-valued boundedness of $T$. It would be of great interest to develop techniques to extend more general multilinear operators beyond the function space setting.
\end{remark}
\subsection{The bilinear Hilbert transform}
The bilinear Hilbert transform is defined as
\[
\BHT(f_1,f_2)(x):=\pv\int_\R\!f_1(x-y)f_2(x+y)\,\frac{\mathrm{d}y}{y}, \qquad x \in \R.
\]
As shown in \cite{CDO18}, if $r_1,r_2,s\in(1,\infty)$ satisfy the property that there exist $\theta_1,\theta_2,\theta_3\in[0,1)$ with $\theta_1+\theta_2+\theta_3=1$ such that
\begin{equation}\label{eq:rsbht}
\frac{1}{r_1}<\frac{1+\theta_1}{2},\quad\frac{1}{r_2}<\frac{1+\theta_2}{2},\quad\frac{1}{s}>\frac{1-\theta_3}{2}
\end{equation}
or equivalently
\[
\max\big\{\frac{1}{r_1},\frac{1}{2}\big\}+\max\big\{\frac{1}{r_2},\frac{1}{2}\big\}+\max\big\{\frac{1}{s'},\frac{1}{2}\big\}<2,
\]
then for all $f_1,f_2,g\in L^\infty_c(\R)$ there is a sparse collection $\mc{S}$ such that
\[
\|\BHT(f_1,f_2)\cdot g\|_{L^1(\R)}\lesssim\sum_{Q\in\mc{S}}\langle f_1\rangle_{r_1,Q}\langle f_2\rangle_{r_2,Q}\langle g\rangle_{s',Q}|Q|.
\]
It was later shown in \cite{BM17} that for $r_1,r_2,s\in(1,\infty)$ satisfying \eqref{eq:rsbht} we also have the $\ell^q$-type sparse domination
\begin{equation}\label{eq:BHTconj}
\|\BHT(f_1,f_2)\cdot g\|_{L^q(\R)}\lesssim\Big(\sum_{Q\in\mc{S}}\langle f_1\rangle^q_{r_1,Q}\langle f_2\rangle^q_{r_2,Q}\langle g\rangle^q_{\frac{1}{\frac{1}{q}-\frac{1}{s}},Q}|Q|\Big)^{\frac{1}{q}}
\end{equation}
for all $q \in (0,s)$, verifying Conjecture~\ref{con:sdiwsd} for this operator. Hence, by Proposition~\ref{prop:sparsequiv}, Theorem~\ref{thm:mainthm1}, and Theorem~\ref{thm:weightcor}, we obtain the following result for the tensor extension $\widetilde{\BHT}$:
\begin{proposition}\label{prop:bhtresult}
Let $r_1,r_2,s\in(1,\infty)$ satisfy \eqref{eq:rsbht} and let $(X_1,X_2)$ be a pair of quasi-Banach function spaces $\Omega$ such that $\vec{X}^q \in \UMD_{\frac{\vec{r}}{q},\frac{s}{q}}$ for some $q\in(0,1]$. Then for all simple functions $\vec{f}\in L^\infty_c(\R;\vec{X})$ and  $g\in L^\infty_c(\R)$ there exists a sparse collection of cubes $\mc{S}$ such that
\begin{equation}\label{eq:bhtvvsd}
\nrmb{\|\widetilde{\BHT}(f_1,f_2)\|_X\cdot g}_{L^q(\R)}\lesssim_{\vec{X},\vec{r},s}\Big(\sum_{Q\in\mc{S}}\langle \|f_1\|_{X_1}\rangle^q_{r_1,Q}\langle \|f_2\|^q_{X_2}\rangle_{r_2,Q}\langle g\rangle^q_{\frac{1}{\frac{1}{q}-\frac{1}{s}},Q}|Q|\Big)^{\frac{1}{q}}.
\end{equation}
Moreover, we have
\[
\|\widetilde{\BHT}(f_1,f_2)\|_{L_w^p(\R;X)}\lesssim_{\vec{X},\vec{p},\vec{r},s} [\vec{w}]_{\vec{p},(\vec{r},s)}^{  \max\left\{\frac{\frac{1}{r_1}}{\frac{1}{r_1}-\frac{1}{p_1}},\frac{\frac{1}{r_2}}{\frac{1}{r_2}-\frac{1}{p_2}},\frac{\frac{1}{q}-\frac{1}{s}}{\frac{1}{p}-\frac{1}{s}}\right\}}\|f_1\|_{L^{p_1}_{w_1}(\R;X_1)}\|f_2\|_{L^{p_2}_{w_2}(\R;X_2)}
\]
for all $\vec{p}\in(0,\infty]^2$ with $\vec{r}<\vec{p}$ and $p<s$, all $\vec{w}\in A_{\vec{p},(\vec{r},s)}$, and all $\vec{f}\in L^{\vec{p}}_{\vec{w}}(\R^d;\vec{X})$.
\end{proposition}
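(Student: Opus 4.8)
The plan is to derive Proposition~\ref{prop:bhtresult} formally from the scalar $\ell^q$-type sparse bound \eqref{eq:BHTconj} for $\BHT$ recorded from \cite{BM17}, combined with Proposition~\ref{prop:sparsequiv}, Theorem~\ref{thm:mainthm1}, and Theorem~\ref{thm:weightcor}. Since $r_1,r_2,s$ satisfy \eqref{eq:rsbht} and $q\in(0,1]\subseteq(0,s)$, the bound \eqref{eq:BHTconj} supplies, for every $f_1,f_2,g\in L^\infty_c(\R)$, a sparse collection $\mc{S}$ with
\[
\nrmb{\BHT(f_1,f_2)\cdot g}_{L^q(\R)}\lesssim\has{\sum_{Q\in\mc{S}}\langle f_1\rangle_{r_1,Q}^q\langle f_2\rangle_{r_2,Q}^q\langle g\rangle_{\frac{1}{\frac1q-\frac1s},Q}^q|Q|}^{\frac1q}.
\]
Applying Proposition~\ref{prop:sparsequiv} with $m=2$ to the right-hand side shows that $\BHT$ satisfies hypothesis \eqref{eq:mainthmin} of Theorem~\ref{thm:mainthm1}, with $C_{\BHT}\lesssim_{\vec r,s,q}1$.

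Next I would check the remaining hypotheses and invoke the main results. By assumption $\vec X^q\in\UMD_{\frac{\vec r}{q},\frac{s}{q}}$, and $s\in(q,\infty]$ holds because $q\leq 1<s$; moreover $\widetilde{\BHT}$ is the tensor extension of the bilinear operator $\BHT$, so by the remark following Theorem~\ref{thm:mainthm1} the function $\widetilde{\BHT}(f_1,f_2)\colon\R\to X$ is automatically well-defined and strongly measurable for simple $\vec f\in L^\infty_c(\R;\vec X)$. Thus Theorem~\ref{thm:mainthm1} applies and yields \eqref{eq:mainthmout} for $\widetilde{\BHT}$; reading the multisublinear maximal function on its right-hand side as a sparse form again via Proposition~\ref{prop:sparsequiv} produces a sparse collection $\mc{S}$ for which \eqref{eq:bhtvvsd} holds. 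For the weighted estimate, $\BHT$ is $2$-linear and satisfies \eqref{eq:mainthmin}, so Theorem~\ref{thm:weightcor} applies directly and gives, for all $\vec p\in(0,\infty]^2$ with $\vec r<\vec p$ and $p<s$ and all $\vec w\in A_{\vec p,(\vec r,s)}$, the asserted inequality with exponent $\max\bigl\{\frac{1/\vec r}{1/\vec r-1/\vec p},\frac{1/q-1/s}{1/p-1/s}\bigr\}$; unwinding the $2$-vector $\vec r$ this is precisely $\max\bigl\{\frac{1/r_1}{1/r_1-1/p_1},\frac{1/r_2}{1/r_2-1/p_2},\frac{1/q-1/s}{1/p-1/s}\bigr\}$, matching the statement.

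I do not expect a genuine obstacle: the proof is entirely a matter of assembling earlier results. The only points deserving care are that $q\leq 1<s$ legitimately places us in the range $(0,s)$ where \eqref{eq:BHTconj} is available, that the well-definedness and strong measurability of $\widetilde{\BHT}$ are automatic because $\BHT$ is bilinear, and that to obtain the sharpest weighted bound one should take $q$ as large as possible in $(0,1]$ subject to $\vec X^q\in\UMD_{\frac{\vec r}{q},\frac{s}{q}}$; by Propositions~\ref{prop:lebesgueumd}, \ref{prop:iterateumd}, and \ref{proposition:comparisonoldassumption} the resulting class of admissible pairs $(X_1,X_2)$ includes in particular iterated $L^t$-spaces with exponents in the appropriate range, as well as all pairs previously treated in \cite{LN19}.
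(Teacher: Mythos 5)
Your proposal is correct and follows exactly the paper's own route: the scalar $\ell^q$-type sparse bound \eqref{eq:BHTconj} from \cite{BM17}, converted via Proposition~\ref{prop:sparsequiv} into the hypothesis \eqref{eq:mainthmin}, then Theorem~\ref{thm:mainthm1} (with measurability automatic for the tensor extension of the bilinear $\BHT$) for the sparse domination \eqref{eq:bhtvvsd} and Theorem~\ref{thm:weightcor} for the weighted bound, whose exponent unwinds coordinatewise as you state. No gaps.
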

Note that in particular we find that for all $r_1,r_2,s\in(1,\infty)$ satisfying \eqref{eq:rsbht} and all $\vec{X}\in\UMD_{\vec{r},s}$ we have
\[
\|\widetilde{\BHT}(f_1,f_2)\|_{L^p(\R;X)}\lesssim_{\vec{X},\vec{p},\vec{r},s} \|f_1\|_{L^{p_1}(\R;X_1)}\|f_2\|_{L^{p_2}(\R;X_2)}
\]
for all $f_j\in\ms{S}(\R;X_j)$.

We point out here that \cite{BM17} actually proved the vector-valued sparse domination \eqref{eq:bhtvvsd} in the cases where the $X_j$ are iterated Lebesgue spaces with the same range of exponents we obtain (see Proposition~\ref{prop:lebesgueumd}), through the helicoidal method. It is worth to note that Proposition~\ref{prop:bhtresult} extends the main result of \cite{BM17} to our more general vector spaces by only using the scalar-valued sparse domination \eqref{eq:BHTconj} as an input.

We wish to compare Proposition \ref{prop:bhtresult} with \cite[Theorem 5.1]{LN19}. First of all, in terms of weights, Proposition \ref{prop:bhtresult} improves \cite[Theorem 5.1]{LN19} by considering more general bilinear weight classes. As for the spaces, in \cite[Theorem 5.1]{LN19}, combined with  \cite[Proposition~3.3(iii) and Theorem~3.6]{LN19}, bounds in terms of pairs of quasi-Banach function spaces $(X_1,X_2)$ are obtained where there exist $\theta_1,\theta_2\in[0,1)$ with $\theta_1+\theta_2\in(0,1]$ such that
\[
\frac{1}{r_1}<\frac{1+\theta_1}{2},\quad\frac{1}{r_2}<\frac{1+\theta_2}{2},\quad\frac{1}{s_1}>\frac{\theta_1}{2},\frac{1}{s_2}>\frac{\theta_2}{2}
\]
and $((X_1^{r_1})^\ast)^{(s_1/r_1)'}$, $((X_2^{r_2})^\ast)^{(s_2/r_2)'}\in\UMD$. To compare this to our spaces, we set $\theta_3:=1-\theta_1-\theta_2\in[0,1)$ and note that
\[
\frac{1}{s}=\frac{1}{s_1}+\frac{1}{s_2}>\frac{\theta_1+\theta_2}{2}=\frac{1-\theta_3}{2}
\]
so that $(r_1,r_2,s)\in(1,\infty)$ satisfies \eqref{eq:rsbht}. Moreover, by Proposition~\ref{proposition:comparisonoldassumption} we find that for all $q\in(0,r]$ we have $\vec{X}^q\in\UMD_{\frac{\vec{r}}{q},\frac{s}{q}}$. Thus, Proposition \ref{prop:bhtresult} recovers the bounds from \cite[Theorem 5.1]{LN19}.

We point out that Proposition \ref{prop:bhtresult} implies bounds for many spaces that were not attainable in \cite{LN19}, since we are no longer restricted to requiring a $\UMD$ property on each individual space. In particular, we recover the bounds with respect to the spaces $\vec{X}=(\ell^{t_1},\ell^{t_2})$ from \cite[Corollary 4.9]{Ni19} for $t_1,t_2\in(0,\infty]$ with $\vec{t}>\vec{r}$, $t<s$.

To end this section, we compare our results to the results obtained by Amenta and Uraltsev \cite{AU19} and Di Plinio, Li, Martikainen, and Vuorinen \cite{DLMV19}. In their work they prove vector-valued bounds for $\BHT$ for triples of complex Banach spaces $(X_1,X_2,X_3)$ that are not necessarily Banach function spaces, but that are compatible in the sense that there is a bounded trilinear form $\Pi:X_1\times X_2\times X_3\to\C$. Then the trilinear form $\BHF(f_1,f_2,f_3):=\langle \BHT(f_1,f_2)f_3\rangle$ has the vector-valued analogue
\[
\BHF_\Pi(f_1,f_2,f_3):=\int_\R\!\pv\int_\R\!\Pi(f_1(x-y),f_2(x+y),f_3(x))\,\frac{\mathrm{d}y}{y}\,\mathrm{d}x,
\]
whose boundedness properties can then be studied. We point out that the main result in \cite{DLMV19} considers estimates for the same tuples of spaces as in \cite{AU19}, but for a larger range of exponents. Since our main interest is in the spaces, for simplicity we compare our result to the main result of \cite{AU19}.
To state the result we need to introduce the notion of intermediate $\UMD$ spaces. We say that a Banach space $X$ is a $u$-intermediate $\UMD$ space for $u\in[2,\infty]$ if it is isomorphic to the complex interpolation space $[E,H]_{\frac{2}{u}}$, where $E$ is a $\UMD$ space and $H$ is a Hilbert space and the couple $(E,H)$ is compatible. For $\vec{u} \in [2,\infty]^m$ We say that a tuple of Banach spaces $\vec{X}$ is $\vec{u}$-intermediate $\UMD$ if $X_j$ is $u_j$-intermediate $\UMD$ for $1\leq j\leq m$.

\begin{theorem}[{\cite[Theorem~1.1]{AU19}}]\label{thm:au19} Take $\vec{u} \in [2,\infty]^m$ and let $\vec{X}$ be a triple of $\vec{u}$-intermediate Banach spaces and let $\Pi:X_1\times X_2\times X_3\to\C$ be a bounded trilinear form. For all $p_1,p_2\in(1,\infty)$ with $p\in(1,\infty)$ satisfying
\begin{equation}\label{eq:authm1}
1<\frac{1}{u_1}\min\Big\{\frac{u_1'}{p_1'},1\Big\}+\frac{1}{u_2} \min\Big\{\frac{u_2'}{p_2'},1\Big\}+\frac{1}{u_3}\min\Big\{\frac{u_3'}{p},1\Big\},
\end{equation}
we have
\begin{equation}\label{eq:authm2}
|\BHF_\Pi(f_1,f_2,g)|\lesssim\|f_1\|_{L^{p_1}(\R;X_1)}\|f_2\|_{L^{p_2}(\R;X_2)}\|g\|_{L^{p'}(\R;X_3)}
\end{equation}
for all $f_j\in\ms{S}(\R;X_j)$, $g\in\ms{S}(\R;X_3)$.
\end{theorem}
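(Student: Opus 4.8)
The plan is to prove Theorem~\ref{thm:au19} by a direct time-frequency analysis of the vector-valued trilinear form $\BHF_\Pi$ inside the outer $L^p$-space framework of Do and Thiele, adapted to the Banach-valued setting; this is a genuine departure from the sparse-domination route of the preceding sections, which is simply unavailable here, since $X_1,X_2,X_3$ are only assumed to be (intermediate) $\UMD$ \emph{Banach} spaces and none of the lattice constructions — in particular the multisublinear lattice maximal operator — make sense for them. \textbf{Step 1 (reduction to a model sum).} First I would run the standard wave-packet decomposition of $\BHT$: writing the kernel $1/y$ as a sum of smooth Whitney pieces and decomposing each piece into modulated bumps, one reduces $\BHF_\Pi(f_1,f_2,g)$ to a uniformly convergent superposition of model sums $\sum_{P\in\mathbf{P}} |I_P|^{-1/2}\,\Pi\bigl(\ip{f_1,\phi_{P,1}},\ip{f_2,\phi_{P,2}},\ip{g,\phi_{P,3}}\bigr)$, where $\mathbf{P}$ is a finite rank-$1$ family of tri-tiles and the $\phi_{P,j}$ are $L^2$-normalized wave packets adapted to $P$. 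By the usual limiting and scaling arguments it suffices to bound one such sum with a constant independent of $\mathbf{P}$, which removes the principal value and the truncations at the end.

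\textbf{Step 2 (outer measure and Carleson embeddings).} The model sum is viewed as an integral $\int_{\mathbf{X}} \Pi(F_1,F_2,F_3)\,\mathrm{d}\mu$ over the time-frequency-scale space $\mathbf{X}$ equipped with the outer measure $\mu$ generated by trees (strips), with $F_j(P):=\ip{f_j,\phi_{P,j}}$. The core analytic input is a pair of $X_j$-valued \emph{Carleson embedding theorems}, bounding outer $L^{p_j}$-quasinorms of $F_j$ with respect to suitable sizes by $\|f_j\|_{L^{p_j}(\R;X_j)}$: when $X_j=H$ is Hilbert one obtains the full exponent range using the $L^2$-size, via a Bessel/tree-counting orthogonality argument; when $X_j=E$ is $\UMD$ one obtains a restricted range, proved by exploiting the $\UMD$ property through $E$-valued Carleson-type maximal bounds for the frequency-localized averages and through $\UMD$-valued martingale/square-function estimates on a single tree. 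For a $u_j$-intermediate space $X_j=[E,H]_{2/u_j}$ one then complex-interpolates between these two endpoint embeddings to obtain the embedding valid for exponents in the range dictated by the factor $\tfrac{1}{u_j}\min\{u_j'/p_j',1\}$ — precisely the quantity appearing in \eqref{eq:authm1}.

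\textbf{Step 3 (outer Hölder and assembly).} Combining the three embeddings with the outer Hölder inequality and the basic estimates of the outer $L^p$ calculus on $(\mathbf{X},\mu)$, one bounds the model sum by a product $\prod_{j}\|F_j\|_{L^{q_j}_\mu(\text{size}_j)}$, and these outer-$L^{q_j}$ norms are controlled by $\|f_j\|_{L^{p_j}(\R;X_j)}$. The exponents $q_j$ are conjugate-summable, $\sum_j q_j^{-1}<1$, exactly under hypothesis \eqref{eq:authm1}; this is what legitimizes the outer Hölder step and lets one sum the geometric series over the Whitney scales and over the finitely many wave-packet families, and then undoing the reductions of Step~1 yields \eqref{eq:authm2}.

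\textbf{The main obstacle} is Step~2, and within it the $\UMD$-valued single-tree (local) embedding estimate. In the scalar Lacey–Thiele argument this rests on pointwise size control together with $L^2$-orthogonality on a tree; the task is to find the correct vector-valued substitute when the only structural hypothesis on $E$ is the $\UMD$ property. This calls for $E$-valued analogues of the Carleson operator bound and of the $L^p$-boundedness of maximal modulated Fourier truncations, together with a quantitative form of the interpolation against the Hilbert-space estimate so that the $u_j$-dependence comes out exactly as in \eqref{eq:authm1}; once these are in place, the remaining steps are a careful but by now routine orchestration of the outer $L^p$ machinery.
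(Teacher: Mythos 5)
You should first note that the paper does not prove Theorem~\ref{thm:au19} at all: it is quoted verbatim from Amenta and Uraltsev \cite{AU19} purely as a point of comparison with Proposition~\ref{prop:bhtresult}, and the sparse/lattice machinery of this paper cannot prove it, since general (intermediate) $\UMD$ Banach spaces carry no lattice structure. So there is no in-paper proof to compare against; your proposal is, in effect, a pr\'ecis of the route actually taken in \cite{AU19} (and in \cite{DLMV19}): wave-packet model sums, the Do--Thiele outer $L^p$ framework, Banach-valued Carleson embedding theorems, and an outer H\"older step.

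As a proof, however, the proposal has a genuine gap, and you identify it yourself: the entire substance of the cited theorem is the $X_j$-valued Carleson embedding of Step~2, including the precise way the $u_j$-intermediate structure produces the exponent constraint \eqref{eq:authm1}, and this is asserted rather than established. Two points make this more than a routine omission. First, the $\UMD$-valued single-tree/size estimates are not obtainable by inserting a known $E$-valued Carleson-operator bound; \cite{AU19} has to build new localized (size and mass) estimates using randomized norms and defect-type operators, and these are the technical heart of their paper. Second, the phrase ``complex-interpolate between these two endpoint embeddings'' is not a legitimate black-box step: the embedding bounds are nonlinear quasinorm estimates into outer $L^p$ spaces, which do not form an interpolation couple in any standard sense, so the interpolation must be performed at the level of localized sizes on a fixed tree, using simultaneously the $\UMD$ structure of $E$ and the Hilbert structure of $H$ through the identification $X_j=[E,H]_{2/u_j}$; this is exactly where the factors $\frac{1}{u_j}\min\{u_j'/p_j',1\}$ in \eqref{eq:authm1} arise and cannot be recovered from the two endpoints separately. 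Until those embedding theorems are proved, Steps~1 and~3 are standard scaffolding around an unproven core, so the proposal should be regarded as a correct reading of the strategy of \cite{AU19} rather than a proof.
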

Even though we are not able to recover any of their results for spaces that are not Banach function spaces, in the setting of Banach function spaces our results go much beyond theirs.
Indeed, consider a pair of complex quasi-Banach function spaces $(X_1,X_2)$ over $(\Omega,\mu)$. Then we define
\[
\Pi:X_1\times X_2\times X^\ast\to\C,\quad\Pi(f_1,f_2,g):=\int_\Omega\!f_1f_2g\,\mathrm{d}\mu.
\]
By an application of Fubini's Theorem, we find that for all $f_j\in\ms{S}(\R;X_j)$, $g\in\ms{S}(\R;X^\ast)$ we have
\begin{equation}\label{eq:bhftobht}
\begin{split}
|\BHF_\Pi(f_1,f_2,g)|&=\left|\int_\R\!\int_\Omega\!\BHT(f_1(\cdot,\omega),f_2(\cdot,\omega))(x)g(x,\omega)\,\mathrm{d}\mu(\omega)\,\mathrm{d}x\right|\\
&\leq\|\widetilde{\BHT}(f_1,f_2)g\|_{L^1(\R;L^1(\Omega))}.
\end{split}
\end{equation}
This means that the sparse domination result in Proposition~\ref{prop:bhtresult}  combined with Proposition~\ref{prop:vvsdequivalence} implies that whenever $r_1,r_2,s\in(1,\infty)$ satisfy \eqref{eq:rsbht} and $\vec{X}\in\UMD_{\vec{r},s}$, we obtain \eqref{eq:authm2} for all $\vec{p}\in(0,\infty]^2$ with $\vec{r}<\vec{p}$ and $p<s$, as well as weighted bounds.

Since intermediate $\UMD$ spaces are themselves $\UMD$ spaces, any of our results where $X_1$ or $X_2$ is not $\UMD$ improve on Theorem~\ref{thm:au19} in the function space setting. This includes examples such as $X_1=L^\infty(\Omega)$, $X_2=L^2(\Omega)$, or $X_1=\ell^2(\ell^\infty)$, $X_2=\ell^\infty(\ell^2)$, see Proposition~\ref{prop:lebesgueumd}.

Next, let $\vec{t}\in(0,\infty]^2$ with $\vec{r}<\vec{t}$, $1\leq t<s$ and consider the case
\[
X_1=L^{t_1}(\Omega),\quad X_2=L^{t_2}(\Omega),\quad X^\ast=L^{t'}(\Omega).
\]
Then by \eqref{eq:bhftobht} and Proposition~\ref{prop:bhtresult} with $q=1$ we obtain
\begin{equation}\label{eq:bhflebesgue}
|\BHF_\Pi(f_1,f_2,g)|\lesssim\|f_1\|_{L^{p_1}(\R;L^{t_1}(\Omega))}\|f_2\|_{L^{p_2}(\R;L^{t_2}(\Omega))}\|g\|_{L^{p'}(\R;L^{t'}(\Omega))}
\end{equation}
for all $f_j\in\ms{S}(\R;L^{t_j}(\Omega))$, $g\in\ms{S}(\R;L^{t'}(\Omega))$ and $\vec{p}\in(0,\infty]^2$ with $\vec{r}<\vec{p}$, $p<s$. This is beyond the reach of Theorem~\ref{thm:au19}, as Theorem~\ref{thm:au19} does not include Lebesgue space over non-atomic measure spaces because of the restrictions in \eqref{eq:authm1}, see  \cite[Example~6.2.3]{AU19}.

\subsection*{Acknowledgements}
The authors wish to thank Mark Veraar and Alex Amenta for their helpful feedback on the draft. Moreover, the second author is grateful to Alex Amenta for the insightful discussions regarding vector-valued extensions of the bilinear Hilbert transform. Finally, the authors express their gratitude to Francesco Di Plinio for providing some helpful historical remarks.

\newcommand{\etalchar}[1]{$^{#1}$}

\end{document}